\newcommand{\noun}[1]{\textsc{#1}}
\numberwithin{equation}{section}
\numberwithin{figure}{section}
\theoremstyle{plain}
\newtheorem{thm}{\protect\theoremname}[section]
\theoremstyle{plain}
\newtheorem{prop}[thm]{\protect\propositionname}
\theoremstyle{remark}
\newtheorem{rem}[thm]{\protect\remarkname}
\newenvironment{lyxlist}[1]
	{\begin{list}{}
		{\settowidth{\labelwidth}{#1}
		 \setlength{\leftmargin}{\labelwidth}
		 \addtolength{\leftmargin}{\labelsep}
		 }}
	{\end{list}}
\theoremstyle{remark}
\newtheorem*{rem*}{\protect\remarkname}
\theoremstyle{plain}
\newtheorem{lem}[thm]{\protect\lemmaname}
\theoremstyle{plain}
\newtheorem{cor}[thm]{\protect\corollaryname}
\pgfplotsset{compat=1.15}
\renewcommand{\d}{\;\mathrm{d}}
\renewcommand{\rho}{\varrho}
\newcommand{\1}{\mathbbm{1}}
\newcommand{\e}{\mathrm{e}}
\DeclareMathOperator{\card}{card}
\renewcommand{\emptyset}{\varnothing}
\renewcommand{\phi}{\varphi}
\newcommand{\N}{{\mathbb{N}}}
\def\R{\mathbb{R}}
\def\Z{\mathbb{Z}}
\def\H{\mathbb{H}}
\DeclareMathOperator{\cov}{cov}
\renewcommand{\emptyset}{\varnothing}
\newcommand{\Rep}{\boldsymbol{\frak{R}}}
\newcommand{\Per}{{\mathfrak{P\!e\!r}}}
\newcommand{\RepMU}{\boldsymbol{\mathfrak{R}}'}
\newcommand{\FI}{F} 
\theoremstyle{definition}
\newtheorem*{assump*}{Assumption}
\definecolor{lime}{HTML}{A6CE39}
\DeclareRobustCommand{\orcidicon}{%
	\begin{tikzpicture}
	\draw[lime, fill=lime] (0,0) 
	circle [radius=0.16] 
	node[white] {{\fontfamily{qag}\selectfont \tiny ID}};
	\draw[white, fill=white] (-0.0625,0.095) 
	circle [radius=0.007];
	\end{tikzpicture}
	\hspace{-2mm}
}
\xdef\csname orcid\x\endcsname{\noexpand\href{https://orcid.org/\csname orcidauthor\x\endcsname}{\noexpand\orcidicon}}
\providecommand{\corollaryname}{Corollary}
\providecommand{\lemmaname}{Lemma}
\providecommand{\propositionname}{Proposition}
\providecommand{\remarkname}{Remark}
\providecommand{\theoremname}{Theorem}
\begin{document}
\title[Dimension gap and phase transition for reflected random walks]{ Dimension gap and phase transition for one-dimensional random walks
with reflective boundary}
\author{Maik Gröger\orcidA{}}
\address{Faculty of Mathematics and Computer Science, Jagiellonian University,
ul. Prof. S. Łojasiewicza 6, 30-348 Kraków, Poland}
\email{maik.groeger@im.uj.edu.pl}
\author{Johannes Jaerisch\orcidB{}}
\address{Graduate School of Mathematics, Nagoya University, Furocho, Chikusaku,
Nagoya, 464-8602 Japan}
\email{jaerisch@math.nagoya-u.ac.jp}
\author{Marc Kesseböhmer\orcidC{}}
\address{Institute for Dynamical Systems, Faculty 3 -- Mathematics and Computer
Science, University of Bremen, Bibliothekstr. 5, 28359 Bremen, Germany}
\email{mhk@uni-bremen.de}
\begin{abstract}
We study $\Z$- and $\N$-extensions of interval maps with at most
countably many full branches modelling one-dimensional random walks
without and with a reflective boundary. We analyse the associated
Gurevich pressure and explore the relations governing these two cases.
For such extensions, we obtain variational formul\ae~for the Gurevich
pressure that depend only on the base system. As a consequence, we
characterise the systems with a dimension gap and, in the presence
of a reflective boundary, provide general conditions in terms of asymptotic
covariances for a second order phase transition. As a by-product,
we derive a variational formula for the spectral radius of infinite
Hessenberg matrices.
\end{abstract}

\keywords{Transient dynamics, skew-products, thermodynamic formalism, Poincaré
exponent\emph{,} phase transition, random walk, reflective boundary,
Hessenberg matrix.}
\subjclass[2000]{37E05, 37D35, 60J10, 28A80}
\date{\today}
\maketitle

\section{Introduction}

Interval maps, including unimodal maps such as the logistic map, are
fundamental to the study of chaotic behaviour and bifurcation theory.
In essence, interval maps represent a cornerstone in the field of
dynamical systems, offering a unifying framework for grasping a vast
spectrum of dynamical phenomena. In particular, they can be employed
to model scenarios where the subsequent state--say, in the integers--hinges
on a random selection drawn from an interval base transformation on
the unit interval. Such models can be used to elucidate the influence
of randomness on long-term behaviour and on limiting distributions.

The present study focuses on the dimension theory and thermodynamic
formalism for interval maps that are not covered by the symbolic thermodynamic
formalism of mixing Markov shifts with finite alphabet, nor by its
generalisations to countable Markov shifts with finitely irreducible
incidence matrix or the closely related concept of big images and
preimages \cites{MR2003772}{MR1738951}. In \cite{MR4373996} the
authors studied the transient and recurrent dynamics of skew-product
dynamical systems that are transitive but not mixing and whose base
transformations are given by an interval map with \emph{finitely}
many expanding full branches, modelling in this way a \emph{random
walk on $\R$}. The purpose of this paper is to generalise this setting
in two ways: First, for the base dynamics, we allow infinitely many
branches; second, and more importantly, we model a \emph{random walk
with a boundary}, that is, \emph{on} $\R_{\geq0}$.

In addition to this fundamental treatment of random walks with a boundary,
our approach also allows us to embed interesting examples--such as
those discussed in \cite{MR2959300,MR1438267,MR3289914}, namely dissipative
interval dynamics and (linear) models of induced maps arising from
unimodal maps--into this more systematic framework. 

The primary objective of this study is to address the phenomenon of
a dimension gap (see \prettyref{subsec:Expanding-interval-maps} for
the definition). This is a crucial aspect that necessitates a detailed
examination of semi-group extensions, such as $\Z$- and $\N$-extensions,
see \prettyref{thm:DimGap_non-reflective}, \prettyref{thm:HD_Transient}
and \prettyref{thm:Big_Theorem}. Fixing an invariant probability
measure on the unit interval and considering the stochastic process
obtained by projecting the dynamics on the real line to the integer
part, our $\mathbb{N}$-extensions can be regarded as random walks
on $\mathbb{Z}$ with a reflective boundary in zero. This in turn
gives rise to the associated phenomenon of a second-order phase transition,
which we will characterise in comprehensive detail in \prettyref{thm:Phasetransition}.

Our motivation stems, in part, from a specific family of piecewise
linear maps suggested by van Strien, explored by Stratmann and Vogt
\cite{MR1438267}, and later by Bruin and Todd \cite{MR2959300}.
This family provides a piecewise linear modelling of induced maps
arising from unimodal maps and captures their transient and recurrent
behaviour (see also \cite{MR3289914}). We will revisit this family
in \prettyref{sec:Applications-and-examples}, providing alternative
proofs of previously established properties of these maps and opening
a new perspective on dimension gaps, strange attractors, and second-order
phase transitions in the context of these piecewise linear models.
Although the examples discussed in \prettyref{sec:Applications-and-examples}
are mainly of linear type, our results apply more generally to non-linear
induced maps as well.

Dimension gaps arise in several other prominent settings, including
those studied by Avila and Lyubich \cite{MR2373353}, Mayer and Urbański
\cite{MR4709229}, and Ledrappier and Lessa \cite{LedrappierLessa2025}.
In the setting of our models of random walks with a reflective boundary,
we can completely describe the emergence of a dimension gap and its
quantitative relation to the unreflected case. Furthermore, the appearance
of a second-order phase transition--first observed in \cite{MR2959300}
for the above-mentioned family--and its interpretation as a boundary
effect, together with its characterisation via asymptotic covariance
criteria, represent a novel contribution.

Our work offers a general perspective that unifies the concept of
Gurevich pressure across the two types of extensions under consideration.
We demonstrate how this pressure can be effectively calculated through
a variational approach (see \prettyref{eq:(2)} and \prettyref{eq:(3)}),
relying solely on the pressure function for the base system. As a
by-product, this also yields a variational formula for the spectral
radius of infinite Hessenberg matrices (see the end of \prettyref{sec:Applications-and-examples}).
Moreover, our approach is well in line with studies on random walks
in random environments (see, for example, \cites{MR657919}{MR678156}{MR4674045}{MR3816760})
and may provide a useful framework for exploring related phenomena
in that broader setting.

\section{Exposition of main results}

We first introduce a specific class of expanding interval maps on
$\R$ and state a version of a well-known result of Bishop and Jones
for Kleinian groups.

\subsection{Expanding interval maps and recurrent sets\label{subsec:Expanding-interval-maps}}

We say that $f:\Delta\rightarrow\R$ is an \emph{expanding $C^{2}$-Markov
interval map} if it satisfies the following conditions:
\begin{enumerate}
\item (\emph{Open domain}) There exist $I\subset\N$ with $\card(I)\ge2$
and non-empty pairwise disjoint open intervals $(J_{i})_{i\in I}$
such that $\Delta=\biguplus_{i\in I}J_{i}$.
\item (\emph{Grid structure}) We have $\Delta\cap\Z=\emptyset$ and for
each $i\in I$ there exists $k\in\Z$ such that $f(J_{i})=(k,k+1)$.
\item (\emph{$C^{2}$-extension}) For each $i\in I$, $f|_{J_{i}}$ has
a $C^{2}$-extension to a neighbourhood of the closure $\overline{J_{i}}$,
whose restriction to $\overline{J_{i}}$ is denoted by $f_{i}$.
\item (\emph{Uniform expansion}) It holds that $\inf_{x\in\Delta}\left|f'\left(x\right)\right|>1$.
\item (\emph{Rényi condition}) We have $\sup_{x\in\Delta}\left|f''(x)\right|\big/\left(f'(x)\right)^{2}<\infty$.
\end{enumerate}
We remark that the $C^{2}$-extension and Rényi condition assumptions
can be relaxed to a $C^{1+\varepsilon}$-extension combined with a
bounded distortion condition on $f'$.

The\emph{ repeller} of $f:\Delta\rightarrow\R$ is defined by
\[
\Rep\left(f\right)\coloneqq\bigcap_{n\ge0}f^{-n}\left(\Delta\right).
\]
Comparing this notion with the \emph{limit set }of the corresponding\emph{
graph directed Markov system} in the terminology of \cite{MR2003772}
with possibly infinitely many vertices, 
\[
\RepMU(f)\coloneqq\adjustlimits\bigcap_{n\ge0}\bigcup_{\left(\omega_{0},\dots,\omega_{n}\right)\in I^{n+1}:f\left(J_{\omega_{i}}\right)\supset J_{\omega_{i+1}}}f_{\omega_{0}}^{-1}\circ\dots\circ f_{\omega_{n}}^{-1}\left(f_{\omega_{n}}\left(\overline{J_{\omega_{n}}}\right)\right),
\]
 we find that $\RepMU(f)\setminus\Rep(f)$ is at most countable. Since
\[
f\left(\Rep\left(f\right)\right)=f\left(\bigcap_{n\ge0}f^{-n}(\Delta)\right)\subset f\left(\Delta\right)\cap\bigcap_{n\ge0}f^{-n}\left(\Delta\right)\subset\Rep\left(f\right),
\]
the repeller gives rise to the dynamical system $f:\Rep\left(f\right)\to\Rep\left(f\right)$,
which, throughout, we assume to be topologically transitive:
\begin{enumerate}
\item [(6)] (\emph{Topological transitivity}) For all $U,V$ non-empty
(relative) open subsets in $\Rep\left(f\right)$ there exists $n\in\N\coloneqq\left\{ 0,1,2,\ldots\right\} $
such that $f^{n}(U)\cap V\neq\emptyset$.
\end{enumerate}
If $\Rep\left(f\right)$ is unbounded, then the natural question arises
about recurrent and transient behaviour. For this we define the \emph{recurrent
}and \emph{uniformly recurrent sets }
\[
R\left(f\right)\coloneqq\left\{ x\in\Rep\left(f\right):\liminf_{n\to\infty}\left|f^{n}(x)\right|<\infty\right\} ,\;R_{u}\left(f\right)\coloneqq\left\{ x\in\Rep\left(f\right):\limsup_{n\to\infty}\left|f^{n}(x)\right|<\infty\right\} .
\]
For $n\geq1,$ let
\[
\Per_{n}\left(f\right)\coloneqq\left\{ x\in\Rep\left(f\right):f^{n}(x)=x\right\} 
\]
 denote the set of all $n$-periodic points of $f:\Rep\left(f\right)\to\Rep\left(f\right)$.
For fixed $i\in I$ we define the \emph{critical (or Poincaré) exponent}
for the recurrent dynamics by
\[
\delta\left(f\right)\coloneqq\inf\bigg\{ s\ge0:\sum_{n\ge1}\sum_{x\in\Per_{n}\left(f\right)\cap J_{i}}\left|\left(f^{n}\right)'(x)\right|^{-s}<\infty\bigg\}.
\]
It follows from a version of \eqref{eq:delta_vs_pressure} combined
with \prettyref{prop:gurevich_wrt_f} that $\delta\left(f\right)$
can be expressed in terms of the Gurevich pressure for countable Markov
shifts. Therefore, we deduce that our definition of $\delta\left(f\right)$
is indeed independent of the particular choice of $i\in I$.

The following dimension formula for the (uniformly) recurrent set
is an analog of a well-known theorem of Bishop and Jones for Kleinian
groups \cite{MR1484767}, making use of the Poincaré exponent. The
method of proof is by now well established also in the framework of
countable Markov shifts and conformal iterated function systems \cite{MR4553466,MR3610938,MR2003772}.
The upper bound is a simple covering argument based on the definition
of the critical exponent. The lower bound uses the exhaustion principle
for the Gurevich pressure (\prettyref{lem:exhaustio_principle}) together
with Bowen's formula for conformal iterated function systems \cite[Theorem 4.2.13]{MR2003772}.
\begin{prop}
\label{prop:Bowen_for-the-skew-periodic-1} For an expanding $C^{2}$-Markov
interval map $f$ we have
\[
\dim_{H}\left(R_{u}\left(f\right)\right)=\dim_{H}\left(R\left(f\right)\right)=\delta(f),
\]
where $\dim_{H}\left(A\right)$ refers to the Hausdorff dimension
of $A\subset\mathbb{R}$.
\end{prop}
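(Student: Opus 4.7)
The plan is to establish the chain
\[
\delta(f) \le \dim_H(R_u(f)) \le \dim_H(R(f)) \le \delta(f),
\]
in which the middle inequality is automatic from $R_u(f) \subset R(f)$ and monotonicity of Hausdorff dimension. Each of the outer inequalities is handled separately, combining the exhaustion principle for the Gurevich pressure and the dictionary between the critical exponent $\delta(f)$ and this pressure provided by \prettyref{prop:gurevich_wrt_f} with the standard Bowen-type dimension argument.

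For the upper bound $\dim_H(R(f)) \le \delta(f)$, I would first fix the reference branch $J_i$ entering the definition of $\delta(f)$ and use the grid structure together with topological transitivity to show that every $x \in R(f)$ has an $f$-orbit visiting $J_i$ at infinitely many times. Consequently, for each $N \in \N$, $R(f)$ is covered by the cylinders associated with $f$-admissible words of length $n \ge N$ starting in $J_i$ and ending at a branch whose image covers $J_i$. The Rényi condition yields the usual bounded distortion estimate, so the diameter of any such length-$n$ cylinder is at most $C\,|(f^n)'(y)|^{-1}$ for any $y$ in it; selecting $y$ to be the $n$-periodic point whose itinerary is the periodic extension of the admissible word, for any $s > \delta(f)$ one obtains
\[
\sum_{\text{length-}n\text{ cylinders}}(\diam)^s \;\le\; C^s \sum_{x \in \Per_n(f) \cap J_i} |(f^n)'(x)|^{-s}.
\]
Summing over $n \ge N$ and letting $N \to \infty$, the right-hand side tends to $0$ by the very definition of $\delta(f)$, so $\mathcal{H}^s(R(f)) = 0$ and hence $\dim_H(R(f)) \le s$.

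For the lower bound $\delta(f) \le \dim_H(R_u(f))$, I would exhaust from the inside by finite Markov subsystems. For each finite $F \subset I$ with $\card(F) \ge 2$, the induced subsystem $f|_{\bigcup_{i \in F} J_i}$ is a finitely irreducible conformal iterated function system in the sense of \cite{MR2003772}; its limit set $\Lambda_F$ is a compact invariant subset of $\bigcup_{i \in F} \overline{J_i}$ and therefore sits inside $R_u(f)$. Bowen's formula \cite[Theorem 4.2.13]{MR2003772} identifies $\dim_H(\Lambda_F)$ with the unique zero $s_F$ of the finite-alphabet pressure $s \mapsto P_F(-s \log |f'|)$. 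The exhaustion principle \prettyref{lem:exhaustio_principle}, combined with \prettyref{prop:gurevich_wrt_f} and the standard identity expressing $\delta(f)$ as the zero of the Gurevich pressure of the geometric potential, gives $\sup_F s_F = \delta(f)$. Monotonicity of Hausdorff dimension then yields $\dim_H(R_u(f)) \ge \sup_F s_F = \delta(f)$.

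The principal obstacle I anticipate lies in the bookkeeping for the upper bound: converting the weak recurrence condition $\liminf_n |f^n(x)| < \infty$ into a cover whose total $s$-weight is genuinely controlled by the sum over periodic points at the \emph{single} reference branch $J_i$. This requires exploiting the grid structure and topological transitivity to replace returns to arbitrary bounded regions by returns to $J_i$, and a careful use of the Rényi condition so that the passage from arbitrary points in a cylinder to the periodic orbit of its symbolic loop contributes only a bounded multiplicative constant. Once this reduction is in place, the remainder is routine thermodynamic formalism.
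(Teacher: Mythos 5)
Your strategy mirrors the paper's sketch exactly: a covering argument keyed to the definition of $\delta(f)$ for the upper bound, and the exhaustion principle \prettyref{lem:exhaustio_principle} together with Bowen's formula from \cite{MR2003772} for the lower bound. The lower bound is sound (modulo the minor point that one should restrict to finite $F\subset I$ for which the induced subsystem is nondegenerate and irreducible, which is harmless since these exhaust the alphabet).

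The upper bound as written, however, has a genuine gap. The claim that every $x\in R(f)$ has an $f$-orbit visiting $J_i$ infinitely often is false: a periodic point whose orbit remains in $J_j$ for some $j\neq i$ lies in $R(f)$ yet never enters $J_i$, and the topological transitivity assumed here (existence of $n$ with $f^n(U)\cap V\neq\emptyset$ for nonempty relatively open $U,V$) does not force individual bounded orbits to pass through $J_i$. Consequently the family of cylinders you describe --- words of length $n\ge N$ \emph{starting} in $J_i$ --- does not cover $R(f)$, and the estimate
\[
\sum (\diam)^s \le C^s \sum_{x\in\Per_n(f)\cap J_i}|(f^n)'(x)|^{-s}
\]
does not yet bound $\mathcal H^s(R(f))$. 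The standard repair --- which you gesture at in your final paragraph without making precise --- is a countable decomposition: write $R(f)=\bigcup_{m\in\Z}R^m(f)$, where $R^m(f)$ is the set of points returning to the box $(m,m+1)$ infinitely often, and use countable stability of Hausdorff dimension to reduce to bounding each $\dim_H R^m(f)$. For fixed $m$, cover $R^m(f)$ by cylinders of return words to $(m,m+1)$; transitivity supplies a connecting word of bounded length from that box to $J_i$ and back, and the Rényi/bounded distortion estimate controls the multiplicative cost of splicing it in. Only after this bookkeeping does the $s$-weight of the cover get genuinely dominated by the Poincaré series over $\Per_n(f)\cap J_i$. As stated, the proposed reduction (``orbits of $R(f)$ visit $J_i$'') would short-circuit exactly the step you later identify as the principal obstacle, so the argument does not close.
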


We say that an expanding $C^{2}$-Markov interval map $f$ exhibits
a \emph{dimension gap} if 
\[
\dim_{H}\left(R\left(f\right)\right)<\dim_{H}\left(\Rep\left(f\right)\right).
\]
This phenomenon can also be expressed in terms of the \emph{transient
set} 
\[
T\left(f\right)\coloneqq\left\{ x\in\Rep\left(f\right):\liminf_{n\to\infty}\left|f^{n}(x)\right|=+\infty\right\} .
\]
Obviously, $\Rep\left(f\right)$ equals the disjoint union $R\left(f\right)\uplus T\left(f\right)$
and hence 
\[
\dim_{H}\left(\Rep\left(f\right)\right)=\max\left\{ \dim_{H}R\left(f\right),\dim_{H}T\left(f\right)\right\} ,
\]
which implies that a dimension gap occurs if and only if 
\[
\dim_{H}\left(R\left(f\right)\right)<\dim_{H}\left(T\left(f\right)\right).
\]

The occurrence of a dimension gap can finally be stated in the spirit
of the \emph{hyperbolic dimension} \cite{MR1626737,MR4709229}: $f$
has a dimension gap if and only if 
\[
\sup\left\{ \dim_{H}\left(\Rep\left(f|_{\bigcup_{i\in K}J_{i}}\right)\right):K\subset I,2\le\card\left(K\right)<\infty\right\} <\dim_{H}\left(\Rep\left(f\right)\right).
\]
Well-known sufficient conditions for the absence of a dimension gap
include finite irreducibility (as defined for the symbolic coding
of the Markov map $f$) or the big images and preimages property.
Note however that these conditions are satisfied if and only if $\Rep(f)$
is bounded. In this paper, we will only focus on systems where $\Rep(f)$
is unbounded and therefore finite irreducibility or the big images
and preimages property cannot hold.

The phenomenon of a dimension gap in the context of dynamical systems
is referred to in various places in the literature and is sometimes
also called a \emph{dimension drop}. We would like to mention \cite{MR2373353},
\cite{MR4709229} and \cite{MR4587901,LedrappierLessa2025,MR4609146}
for some recent contributions.

\subsection{$\Psi$-extensions and skew-periodic interval maps \label{subsec:Periodic-setting-and}}

From now on let $\FI_{0}:\Delta_{0}\to\left(0,1\right)$ be an expanding
$C^{2}$-Markov interval map on the unit interval, meaning $\Delta_{0}\coloneqq\biguplus_{i\in I}J_{i}\subset(0,1)$.
For $\H\subset\Z$ we define the \emph{Minkowski sum} $\Delta_{\H}\coloneqq\Delta_{0}+\H$
with the corresponding index set $I_{\H}\coloneqq I\times\H$ and
call $\Psi:\Delta_{\H}\rightarrow\H$ a \emph{step function} if $\Psi|_{J_{i}+h}$
is constant for each $\left(i,h\right)\in I_{\H}$. We define the
\emph{$\Psi$-extension} of $\FI_{0}$ by 
\[
\FI_{\Psi}:\Delta_{\H}\rightarrow\R,\quad x\mapsto\FI_{0}\left(x-\left\lfloor x\right\rfloor \right)+\Psi(x),
\]
 where $\left\lfloor x\right\rfloor \coloneqq\max\left\{ n\in\Z:n\leq x\right\} $.
In this way we obtain an expanding $C^{2}$-Markov interval map with
\[
\Rep(\FI_{\Psi})=\Rep(\FI_{0})+\H\eqqcolon\Rep_{\H}
\]
and as before we assume that $\FI_{\Psi}:\Rep(\FI_{\Psi})\to\Rep(\FI_{\Psi})$
is topologically transitive. We use $0$ to denote the one-element
subgroup of $\Z$ and with this notation we have $\Rep_{0}=\Rep\left(\FI_{0}\right)=R\left(\FI_{0}\right)$.
Further, with $\kappa\left(x\right)\coloneqq x-\left\lfloor x\right\rfloor $,
$x\in\R$ denoting the projection to $\left[0,1\right)$, the skew-product
structure can be read from the following commutative diagram:
\begin{center}
\vspace*{-1ex}
\begin{tikzcd}
  \Rep_{\H} \arrow[r, "\FI_{\Psi}"] \arrow[d, "\kappa"']
    & \Rep_{\H} \arrow[d, "\kappa" ] \\
  \Rep_{0} \arrow[r,  "\FI_0"'  ]
& \Rep_{0} \end{tikzcd}
\par\end{center}

Now, we will introduce our two main special cases for $\H=\N$ or
$\Z$, which we will force to fulfil some periodicity condition. Fix
the \emph{step length function} $\psi:\Delta_{0}\to\Z_{\geq-1}$,
which is assumed to be constant on $J_{i}$ for $i\in I$.
\begin{description}
\item [{Case~$\boldsymbol{\H=\Z}$}] The following maps where studied
in \cite{MR4373996}: Let
\begin{align*}
\Psi_{\Z}:\Delta_{\Z}\to\Z,\quad & x\mapsto\psi\left(x-\left\lfloor x\right\rfloor \right)+\left\lfloor x\right\rfloor 
\end{align*}
and call $\FI_{\Z}:=\FI_{\,\Psi_{\Z}}$ a \emph{skew-periodic interval
map} \emph{without a reflective} \emph{boundary}.
\item [{Case~$\boldsymbol{\H=\N}$}] We fix additionally the \emph{reflexion
rule} $\psi_{1}:\Delta_{0}\rightarrow\N$, also constant on each $J_{i}$,
$i\in I$, and such that $M+\psi\geq\psi_{1}\ge\psi$ for some $M>0$.
Then, we define
\begin{align*}
\Psi_{\N}:\Delta_{\N}\to\N,\quad & x\mapsto\psi_{1}\left(x\right)\1_{\left(0,1\right]}\left(x\right)+\left(\psi\left(x-\left\lfloor x\right\rfloor \right)+\left\lfloor x\right\rfloor \right)\1_{(1,\infty)}
\end{align*}
and call $\FI_{\N}:=\FI_{\Psi_{\N}}$ a \emph{skew-periodic interval
map} \emph{with a (left) reflective boundary}. Our fractal-geometric
results are independent of the particular choice of $\psi_{1}$, as
we will see later.
\end{description}
\begin{rem}
The notion of a left reflective boundary easily carries over to the
case of a\emph{ right reflective boundary}. Generalisations to step
length functions satisfying $\psi\ge-m$, for some $m\in\mathbb{N}$,
are also possible. For related results for the asymptotics of transition
probabilities for a reflected random walk on $\N$ we refer to \cite{MR1340828}.
\end{rem}

For an expanding $C^{2}$-Markov interval map $f$ let us define the
\emph{right }and\emph{ left transient sets} 
\[
T^{+}\left(f\right)\coloneqq\left\{ x\in\Rep\left(f\right):\liminf_{n\to\infty}f^{n}(x)=+\infty\right\} ,\quad T^{-}\left(f\right)\coloneqq\left\{ x\in\Rep\left(f\right):\limsup_{n\to\infty}f^{n}(x)=-\infty\right\} .
\]
A comparison with the definition of $T$ gives us
\[
T\left(F_{\Z}\right)=T^{+}\left(F_{\Z}\right)\,\uplus\,T^{-}\left(F_{\Z}\right)\qquad\text{and }\qquad T\left(F_{\N}\right)=T^{+}\left(F_{\N}\right).
\]

 For ease of notation, for $\H\in\left\{ 0,\N,\Z\right\} $, we
write
\[
\delta_{\H}:=\delta\left(\FI_{\H}\right).
\]

Henceforth, it will be assumed that $\FI_{0}$ is \emph{strongly regular}
(see \prettyref{subsec:Gibbs-measures-and}). This technical assumption
will be used in the proofs of our main results and implies in particular
that the Hausdorff dimension of the unique equilibrium state $\mu_{\delta_{0}\varphi}$
for $\delta_{0}\varphi$ with \emph{geometric potential} $\varphi:x\mapsto-\log\left|\FI_{0}'(x)\right|$
on $\Rep_{0}$ is equal to $\delta_{0}$. Note that by Proposition
\ref{prop:Bowen_for-the-skew-periodic-1}, $\delta_{0}$ equals the
Hausdorff dimension of $\Rep_{0}$. We will always assume that 
\[
\int\varphi\d\mu_{\delta_{0}\varphi}<+\infty.
\]
To exclude trivial cases we also assume that $-1=\underline{\psi}\coloneqq\inf\psi<0<\overline{\psi}\coloneqq\sup\psi$
and define 
\[
\alpha_{\max}\coloneqq\int\psi\d\mu_{\delta_{0}\varphi}\in\R\cup\left\{ +\infty\right\} .
\]

\begin{rem}
\label{rem:closure_of_repeller} Let us briefly discuss the closure
of the repellers. Denote by $\Rep(\FI_{0},\infty)$ the set of accumulation
points of sequences $(x_{n})$ in $[0,1]$ such that $x_{n}\in\overline{J_{i_{n}}}$
for pairwise distinct $i_{n}\in I$. Further, let us write $\RepMU_{\H}\coloneqq\RepMU(\FI_{\H})$.
It then follows from \cite[Lemma 1.0.1]{MR2003772} that 
\[
\overline{\Rep_{0}}=\overline{\RepMU_{0}}=\RepMU_{0}\cup\bigcup_{n\in\N}\FI_{0}^{-n}\left(\Rep(\FI_{0},\infty)\right).
\]
Moreover, we have for $\H\in\left\{ 0,\N,\Z\right\} $, 
\[
\overline{\Rep_{\H}}=\overline{\RepMU_{\H}}=\overline{\RepMU_{0}}+\H.
\]
If $I$ is finite, then both $\RepMU_{0}$ and $\RepMU_{\H}$ are
closed. If $\FI_{0}$ is, for example, given by the infinitely branched
Gauss map or the Lüroth maps, then $\Rep(\FI_{0},\infty)$ is a singleton
and thus, $\overline{\RepMU_{0}}\setminus\RepMU_{0}$ is a countable
set. In this case, it follows that 
\[
\overline{\RepMU_{\H}}\setminus\RepMU_{\H}=\left(\overline{\RepMU_{0}}\setminus\RepMU_{0}\right)+\H
\]
 is also countable. In particular, in all these cases, 
\[
\dim_{H}\big(\overline{\Rep_{\H}}\big)=\dim_{H}\left(\Rep_{\H}\right)=\dim_{H}\left(\Rep_{0}\right).
\]
\end{rem}

\subsection{Dimension gaps and transient sets\label{subsec:Dimension-gap}}

For $\H\in\left\{ 0,\N,\Z\right\} $ we have by \prettyref{prop:Bowen_for-the-skew-periodic-1}
that $\delta_{\H}=\dim_{H}\left(R\left(\FI_{\H}\right)\right)$. Hence,
$\FI_{\H}$ exhibits a dimension gap if and only if $\delta_{\H}<\delta_{0}$.

It follows from \cite[Theorems C and D]{MR2373353} that for complex
Feigenbaum maps, a dimension gap between the hyperbolic dimension
of the Julia set and its Hausdorff dimension is equivalent to positive
area of the Julia set. We will see that this observation is--considering
the appropriate inducing scheme--reflected by our trichotomy statement
in \prettyref{thm:Big_Theorem}. For skew-periodic interval maps,
criteria for such dimension gaps for both the reflective and non-reflective
case will be presented next. Our first main result reads as follows.
\begin{thm}
\label{thm:DimGap_non-reflective} A skew-periodic interval map $\FI_{\Z}$
exhibits a dimension gap if and only if, either $\alpha_{\max}>0$,
or 
\[
\ensuremath{\alpha_{\max}<0}\quad\text{\&}\quad\exists q>0:\sum_{x\in\Per_{1}\left(\FI_{0}\right)}\exp\left(\delta_{0}\varphi(x)+q\psi(x)\right)<\infty.
\]
\end{thm}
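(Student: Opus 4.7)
The plan is to reduce the gap question for $\FI_\Z$ to a convex-analytic problem in one real variable. By the discussion preceding the theorem, a dimension gap for $\FI_\Z$ is equivalent to $\delta_\Z<\delta_0$. By the variational formula for the Gurevich pressure of $\FI_\Z$ announced in the introduction---which expresses this pressure as $\inf_{q\in\R}P(s\varphi+q\psi)$ in terms of the Gurevich pressure $P$ of the base map $\FI_0$---the inequality $\delta_\Z<\delta_0$ is, in turn, equivalent to the existence of some $q\in\R$ with $h(q)<0$, where
\[
h(q)\coloneqq P(\delta_0\varphi+q\psi).
\]
Note that $h(0)=0$ by the definition of $\delta_0$.

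The function $h$ is convex. Because $\psi\geq-1$ one has $h(q)\leq h(0)+|q|$ for $q\leq 0$, so the effective domain of $h$ always contains $(-\infty,0]$; moreover, under the strong regularity assumption, $h$ is differentiable from the left at $0$ with
\[
h'_-(0)=\int\psi\,\d\mu_{\delta_0\varphi}=\alpha_{\max},
\]
and it is real-analytic on the interior of its effective domain. The gap question thus splits by the sign of $\alpha_{\max}$. If $\alpha_{\max}>0$, then convexity and the positive slope at $0$ force $h(q)<0$ for all $q<0$ sufficiently close to $0$, so a gap holds unconditionally. If $\alpha_{\max}=0$, convexity yields $h\geq 0$ on the whole domain, so there is no gap. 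If $\alpha_{\max}<0$, then, provided some $q_0>0$ lies in the effective domain, analyticity at $0$ gives $h'(0)=\alpha_{\max}<0$ and hence $h(q)<0$ for small $q>0$; conversely, if $h\equiv+\infty$ on $(0,\infty)$, convexity with a negative left-derivative at $0$ forces $h\geq0$ on $(-\infty,0]$ and there is no gap.

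The last step is to match the finiteness condition "$h(q_0)<\infty$ for some $q_0>0$" with the series in the statement. Since $\FI_0$ has full Markov branches $(J_i)_{i\in I}$, with $\psi$ constant on each $J_i$ and $\varphi$ of bounded distortion on each $J_i$, the Gurevich pressure for the resulting countable full shift is related to the one-step partition function by
\[
h(q)=\log\sum_{i\in I}\exp\!\big(\delta_0\varphi(x_i)+q\psi(x_i)\big)+O(1),
\]
where $x_i$ denotes the unique fixed point of $\FI_0$ inside $J_i$. Since $\Per_1(\FI_0)=\{x_i:i\in I\}$, this identifies the desired finiteness with convergence of the series in the theorem for some $q>0$. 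Combined with the three-case analysis above, this yields the claimed equivalence.

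I expect the main technical obstacle to be (i) the rigorous import of the variational identity that presents $P_{\mathrm{Gur}}(\FI_\Z,s\varphi_\Z)$ as $\inf_{q}P(s\varphi+q\psi)$ from the general results to be proved in the sequel, and (ii) the left-differentiability of $h$ at $0$ together with the integral representation $h'_-(0)=\alpha_{\max}$. Both rely on the strong regularity of $\FI_0$ and on the standard thermodynamic-formalism machinery for countable full shifts developed elsewhere in the paper; once these ingredients are in place, the convex-analytic case analysis outlined above is essentially automatic.
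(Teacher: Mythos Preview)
Your proposal is correct and takes essentially the same convex-analytic route as the paper: the paper works with the two-variable pressure $p(s,q)=\mathcal{P}(s\varphi+q\psi)$ and argues for the existence of $(s,q)$ with $s<\delta_0$ and $p(s,q)\le 0$, whereas you work directly with the one-variable slice $h(q)=p(\delta_0,q)$---a reduction the paper itself records in the Remark immediately following its proof. The two technical obstacles you flag are exactly the points the paper handles, namely the left-differentiability $h'_-(0)=\alpha_{\max}$ via a weak-$*$ convergence lemma for Gibbs measures combined with the subdifferential argument from the variational principle, and the passage from $\inf_q h(q)<0$ to $\delta_\Z<\delta_0$ via strong regularity and monotonicity in $s$.
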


In \prettyref{subsec:Drift-without-dimension}, we will give an example
for a system that shows no dimension gap despite the presence of negative
drift. The asymmetry in \prettyref{thm:DimGap_non-reflective} is
caused by our assumption that the step length function, while bounded
from below, is not necessarily bounded from above.
\begin{thm}
\label{thm:HD_Transient}For the transient sets of skew-periodic interval
maps $\FI_{\Z}$ and $\FI_{\N}$ we have 
\[
\dim_{H}\left(T^{+}\left(\FI_{\N}\right)\right)=\dim_{H}\left(T^{+}\left(\FI_{\Z}\right)\right)\;\text{ and }\;\dim_{H}\left(T^{-}\left(\FI_{\N}\right)\right)=0.
\]
Moreover,
\[
\dim_{H}\left(T^{\pm}\left(\FI_{\Z}\right)\right)=\begin{cases}
\delta_{0}, & \pm\alpha_{\max}\ge0,\\
\delta_{\Z}, & \pm\alpha_{\max}<0.
\end{cases}
\]
\end{thm}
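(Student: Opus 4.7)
The argument splits into four parts. First, $\dim_{H}(T^{-}(\FI_{\N}))=0$ is immediate, since $\Rep_{\N}\subset[0,\infty)$ forces every $\FI_{\N}$-orbit to stay non-negative, whence $T^{-}(\FI_{\N})=\emptyset$. For the equality $\dim_{H}(T^{+}(\FI_{\N}))=\dim_{H}(T^{+}(\FI_{\Z}))$, the explicit formulae for $\Psi_{\N}$ and $\Psi_{\Z}$ show that they agree on $(1,\infty)$, so $\FI_{\N}=\FI_{\Z}$ on $\Rep_{\H}\cap(1,\infty)$. Any orbit in $T^{+}$ under either map is eventually trapped in $(1,\infty)$, where the two dynamics coincide; writing $T^{+}(\FI_{\N})$ and $T^{+}(\FI_{\Z})$ as countable unions of preimages of the common subset of orbits staying forever in $[1,\infty)$ and escaping to $+\infty$, bounded distortion of the expanding map ensures that preimages preserve Hausdorff dimension, and the equality follows.

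Next I handle the case $\pm\alpha_{\max}\ge0$. The upper bound $\dim_{H}(T^{\pm}(\FI_{\Z}))\le\delta_{0}$ is immediate from $\Rep_{\Z}=\bigcup_{h\in\Z}(\Rep_{0}+h)$, a countable union of isometric copies of $\Rep_{0}$, each of dimension $\delta_{0}$ (by \prettyref{prop:Bowen_for-the-skew-periodic-1}). For the matching lower bound, say $\alpha_{\max}>0$: by Birkhoff's ergodic theorem, $\mu_{\delta_{0}\varphi}$-a.e.~$y\in\Rep_{0}$ satisfies $S_{n}\psi(y)/n\to\alpha_{\max}>0$, so $y\in T^{+}(\FI_{\Z})$ via the embedding $\Rep_{0}\hookrightarrow\Rep_{\Z}$. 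With $\dim_{H}(\mu_{\delta_{0}\varphi})=\delta_{0}$ (strong regularity) this gives $\dim_{H}(T^{+}(\FI_{\Z}))\ge\delta_{0}$. The boundary case $\alpha_{\max}=0$ coincides with $\delta_{\Z}=\delta_{0}$ (no dimension gap, by \prettyref{thm:DimGap_non-reflective}) and is recovered from the perturbative argument below.

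The heart of the theorem is the case $\pm\alpha_{\max}<0$; I treat $T^{-}$ under $\alpha_{\max}>0$, the other being symmetric. Define $t(q)$ implicitly by $P_{0}(t(q)\varphi+q\psi)=0$, with $P_{0}$ the base Gurevich pressure. By convexity of $P_{0}$ in its potential together with the variational formulas for the Gurevich pressure developed later in the paper, $t$ is convex with $t(0)=\delta_{0}$ and $\min_{q}t(q)=\delta_{\Z}$. Implicit differentiation gives $t'(0)=\alpha_{\max}\big/\bigl(-\int\varphi\d\mu_{\delta_{0}\varphi}\bigr)>0$, so the minimum is attained at some $q^{*}<0$ with $\int\psi\d\mu_{q^{*}}=0$. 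For $q<q^{*}$, the equilibrium state $\mu_{q}$ of $t(q)\varphi+q\psi$ has $\int\psi\d\mu_{q}<0$, so $\mu_{q}$-a.e.~$y$ lies in $T^{-}(\FI_{\Z})$; the entropy formula yields $\dim_{H}(\mu_{q})=t(q)-qt'(q)\to t(q^{*})=\delta_{\Z}$ as $q\uparrow q^{*}$, giving $\dim_{H}(T^{-}(\FI_{\Z}))\ge\delta_{\Z}$.

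For the matching upper bound, I reduce via preimages and integer translations to bounding $\dim_{H}(B)$ with $B\coloneqq\{y\in\Rep_{0}:S_{k}\psi(y)\le0\;\forall k\ge0\}$, and cover $B$ by depth-$n$ $\FI_{0}$-cylinders on which $S_{n}\psi\le0$. For any $s>\delta_{\Z}$, pick $q>0$ with $P_{0}(s\varphi-q\psi)<0$ (feasible since $\inf_{q}t(q)=\delta_{\Z}<s$). The Chernoff-type estimate
\[
\sum_{\omega:\,S_{n}\psi(\omega)\le0}\bigl|(\FI_{0}^{n})'(\omega)\bigr|^{-s}\le\sum_{\omega}e^{-qS_{n}\psi(\omega)}\bigl|(\FI_{0}^{n})'(\omega)\bigr|^{-s}\le C\,e^{nP_{0}(s\varphi-q\psi)}
\]
is exponentially summable in $n$, yielding $\mathcal{H}^{s}(B)=0$ and $\dim_{H}(B)\le\delta_{\Z}$. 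The principal technical obstacle is rigorously running this variational argument with a possibly infinite alphabet and an unbounded $\psi$: one must carefully invoke the Gurevich pressure machinery, use strong regularity to ensure existence, uniqueness, and the dimension identity for the equilibrium states $\mu_{q}$, and establish continuity of $q\mapsto\dim_{H}\mu_{q}$ and of $q\mapsto\int\psi\d\mu_{q}$ in a neighbourhood of $q^{*}$.
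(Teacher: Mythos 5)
Your proposal is organized around four parts, and most of them are sound and even give a cleaner alternative route to the paper's in places. In particular, your treatment of $\dim_{H}(T^{+}(\FI_{\N}))=\dim_{H}(T^{+}(\FI_{\Z}))$ by reducing both sets to the common invariant set $A$ of orbits trapped in $(1,\infty)$ and escaping, then taking countable unions of bi-Lipschitz preimages, is a valid and arguably more transparent argument than the paper's, which instead compares the two dynamics directly via $\FI_{\N}^{n}\ge\FI_{\Z}^{n}$ and the excursion-depth estimate of Lemma~\ref{lem:CombinatoriaKeyObservation}. The case $T^{-}(\FI_{\N})=\emptyset$, the Birkhoff lower bound $\dim_{H}(T^{+}(\FI_{\Z}))\ge\delta_{0}$ when $\alpha_{\max}>0$, the reduction of the $T^{-}$ upper bound to the set $B=\{y:S_{k}\psi(y)\le0\ \forall k\}$, and the Chernoff/tilted-pressure covering estimate are all correct in spirit and match or usefully reorganise the paper's own arguments.

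However, there is a genuine gap in the step labelled ``the other being symmetric.'' The setting is not symmetric in $\pm\psi$: by standing assumption $\psi\ge-1$ is bounded \emph{below} but may be unbounded \emph{above}. For $q<0$ the term $q\psi$ is bounded above by $|q|$, so $\mathcal{P}(s\varphi+q\psi)<\infty$ near $\delta_{0}$ and the equilibrium states $\mu_{q}$ you use to lower-bound $\dim_{H}(T^{-}(\FI_{\Z}))$ exist. But for the case $\alpha_{\max}\le0$, where you would need $\dim_{H}(T^{+}(\FI_{\Z}))\ge\delta_{\Z}$, the analogous perturbative argument requires Gibbs states $\mu_{q}$ with $\int\psi\d\mu_{q}>0$, hence $q>0$ — and $\mathcal{P}(s\varphi+q\psi)$ may then be $+\infty$ for \emph{all} $q>0$. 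The paper's own example in Section~\ref{subsec:Drift-without-dimension} (the $\mathfrak{a}$-Lüroth map with polynomial tails) exhibits exactly this: $\alpha_{\max}<0$ and $\sum\exp(\delta_{0}\varphi+q\psi)=\infty$ for every $q>0$. In this regime $\delta_{\Z}=\delta_{0}$ by Theorem~\ref{thm:DimGap_non-reflective}, yet none of the available equilibrium states (all with $q\le0$, all with non-positive drift) has positive drift, so your argument cannot produce a full-dimensional measure supported on $T^{+}(\FI_{\Z})$. The same obstruction affects the $\alpha_{\max}=0$ boundary case for $T^{+}$, which you defer to ``the perturbative argument below.'' The paper circumvents this by an exhaustion argument: truncate $I$ to a finite subalphabet (so $\psi$ becomes bounded and the finite-alphabet lower bound from \cite[Theorem~1.3]{MR4373996} applies), define $\delta_{\Z}^{n}$ for the truncated system, and show $\delta_{\Z}^{n}\to\delta_{\Z}$ via Lemma~\ref{lem:exhaustio_principle}, Lemma~\ref{lem:infimum_exchange}, and Theorem~\ref{thm:fibre_pressure_vs_base_pressure}. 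Your proof needs this additional approximation step to cover the right-transient lower bound when the drift is non-positive; without it, the claimed symmetry is false.
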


Our next main result is connecting the reflective with the non-reflective
case for skew-periodic interval maps. The equilibrium measures involved
here are considered as Borel measures on $\R$ with support contained
in the unit interval.
\begin{thm}
\label{thm:Big_Theorem} For the critical exponent of a skew-periodic
interval map $\FI_{\N}$ we have 
\[
\text{\ensuremath{\delta}}_{\N}=\begin{cases}
\delta_{\Z}, & \alpha_{\max}\ge0,\\
\delta_{0}, & \alpha_{\max}<0
\end{cases}
\]
and the following trichotomy holds:
\begin{description}
\item [{\emph{Lean~Case}}] ~~~~~~~~~~~~~$\dim_{H}\left(R\left(\FI_{\N}\right)\right)>\dim_{H}\left(T^{+}\left(\FI_{\N}\right)\right)$
\begin{lyxlist}{00.00.0000}
\item [{\hspace*{2cm}$\iff$$\alpha_{\max}<0$}] $\&$ $\exists q>0:\sum_{x\in\Per_{1}\left(\FI_{0}\right)}\e^{\delta_{0}\varphi(x)+q\psi\left(x\right)}<\infty$
\item [{\hspace*{2cm}$\implies$$\mu_{\delta_{0}\varphi}\left(R\left(\FI_{\N}\right)\right)=1$.}]~
\end{lyxlist}
\item [{\emph{Balanced~Case}}] $\dim_{H}\left(R\left(\FI_{\N}\right)\right)=\dim_{H}\left(T^{+}\left(\FI_{\N}\right)\right)$
\begin{lyxlist}{00.00.0000}
\item [{\hspace*{2cm}$\iff$$\alpha_{\max}=0$,}] or $\alpha_{\max}<0$
$\&$ $\forall q>0:\sum_{x\in\Per_{1}\left(\FI_{0}\right)}\e^{\delta_{0}\varphi(x)+q\psi\left(x\right)}=\infty$
\item [{\hspace*{2cm}$\implies$}] $\mu_{\delta_{0}\varphi}\left(R\left(\FI_{\N}\right)\right)=1$.
\end{lyxlist}
\item [{\emph{Black~Hole~Case}}] $\dim_{H}\left(R\left(\FI_{\N}\right)\right)<\dim_{H}\left(T^{+}\left(\FI_{\N}\right)\right)$
\begin{lyxlist}{00.00.0000}
\item [{\hspace*{2cm}$\iff$$\dim_{H}\left(R\left(\FI_{\N}\right)\right)<\dim_{H}\left(\Rep\left(\FI_{\N}\right)\right)$}]~
\item [{\hspace*{2cm}$\iff$$\alpha_{\max}>0$}]~
\item [{\hspace*{2cm}$\iff$$\mu_{\delta_{0}\varphi}\left(T^{+}\left(\FI_{\N}\right)\right)=1$.}]~
\end{lyxlist}
\end{description}
In particular, $\FI_{\N}$ exhibits a dimension gap if and only if
$\alpha_{\max}>0$, or equivalently, if $\mu_{\delta_{0}\varphi}\left(T^{+}\left(\FI_{\N}\right)\right)=1$.
\end{thm}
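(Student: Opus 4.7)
The plan is to combine the dimension formulas from \prettyref{prop:Bowen_for-the-skew-periodic-1} and \prettyref{thm:HD_Transient} with an explicit computation of $\delta_\N$ in terms of $\delta_0$ and $\delta_\Z$, and then to read off the three cases of the trichotomy by matching the sign of $\alpha_{\max}$ to the characterisations in \prettyref{thm:DimGap_non-reflective}. The measure-theoretic claims will follow from Birkhoff's ergodic theorem applied to the base system, exploiting the conjugacy $\kappa\circ\FI_\N=\FI_0\circ\kappa$.

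The first and main step is to establish $\delta_\N=\delta_\Z$ when $\alpha_{\max}\ge 0$ and $\delta_\N=\delta_0$ when $\alpha_{\max}<0$. For the inequality $\delta_\N\ge\delta_\Z$ in the first case, I would use the exhaustion principle for the Gurevich pressure to approximate $\delta_\Z$ by restrictions to finite windows of integers; after translating such a window far to the right, its periodic-orbit structure embeds into $\Rep(\FI_\N)$ without interference from the reflexion at $0$, so the corresponding partial Gurevich pressures also bound $\delta_\N$ from below. The reverse inequality $\delta_\N\le\delta_\Z$ comes from the variational formula for the Gurevich pressure that depends only on the base system: since the reflexion rule $\psi_1$ only modifies a finite boundary layer, the variational expression for $\delta_\N$ is dominated by that for $\delta_\Z$. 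For the case $\alpha_{\max}<0$, the upper bound $\delta_\N\le\delta_0$ is immediate from $\dim_H(R(\FI_\N))\le\dim_H(\Rep(\FI_\N))=\delta_0$ (see \prettyref{rem:closure_of_repeller}), whereas the lower bound is extracted from the measure-theoretic side: via the ergodic theorem and the reflexion at $0$, the base equilibrium state $\mu_{\delta_0\varphi}$ concentrates on $R(\FI_\N)$, whence $\delta_\N=\dim_H(R(\FI_\N))\ge\dim_H(\mu_{\delta_0\varphi})=\delta_0$ by the mass distribution principle.

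Once the formula for $\delta_\N$ is in place, \prettyref{thm:HD_Transient} gives $\dim_H(T^+(\FI_\N))=\delta_0$ when $\alpha_{\max}\ge0$ and $\dim_H(T^+(\FI_\N))=\delta_\Z$ otherwise, while \prettyref{thm:DimGap_non-reflective} determines exactly when $\delta_\Z<\delta_0$. Assembling these facts yields the three cases: Black Hole for $\alpha_{\max}>0$, where $\delta_\N=\delta_\Z<\delta_0=\dim_H(T^+(\FI_\N))$; Lean for $\alpha_{\max}<0$ with the convergent series, where $\delta_\N=\delta_0>\delta_\Z=\dim_H(T^+(\FI_\N))$; and Balanced in the remaining cases, where $\delta_\N=\delta_0=\dim_H(T^+(\FI_\N))$ either because $\alpha_{\max}=0$ or because the series diverges for all $q>0$ and hence $\delta_\Z=\delta_0$. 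For the measure statements, for a $\mu_{\delta_0\varphi}$-typical $x\in\Rep(\FI_0)\subset\Rep(\FI_\N)$, the integer part of $\FI_\N^n(x)$ equals the Birkhoff sum $\sum_{k=0}^{n-1}\psi(\FI_0^k x)$ plus a correction bounded by $M$ times the number of returns to $[0,1]$. When $\alpha_{\max}>0$, Birkhoff's theorem forces this sum divided by $n$ to tend to $\alpha_{\max}>0$, whence the orbit escapes to $+\infty$ and $\mu_{\delta_0\varphi}(T^+(\FI_\N))=1$. When $\alpha_{\max}\le0$, standard recurrence theory for one-dimensional random walks with non-positive drift, reflected at $0$, yields $\mu_{\delta_0\varphi}(R(\FI_\N))=1$. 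The dimension-gap equivalence at the end of the statement then follows from $\delta_\N=\delta_\Z$ when $\alpha_{\max}\ge0$ combined with \prettyref{thm:DimGap_non-reflective}.

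The chief obstacle I anticipate is the rigorous control of the variational formulas needed to prove $\delta_\N=\delta_\Z$ when $\alpha_{\max}\ge0$: one has to show that the boundary term produced by $\psi_1$ does not contribute on the exponential scale governing $\delta_\H$, and this is precisely the point where the boundedness assumption $\psi_1\le\psi+M$ becomes essential. A secondary technicality is proving recurrence of the reflected walk under $\mu_{\delta_0\varphi}$ when $\alpha_{\max}<0$, but this reduces to a standard positive-recurrence argument for Markov chains with negative drift and uniformly bounded increments.
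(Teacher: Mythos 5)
The crux of the paper's proof is a chain of reductions: the variational formula $\mathcal{P}(g,\N)=\inf_{q\le 0}\mathcal{P}(g+q\psi)$ from \prettyref{thm:Onesided_fibre-pressure_vs_base-pressure}, the infimum exchange \prettyref{lem:infimum_exchange}, and then \prettyref{lem:inf_sq_criterion}, whose entire content is the subdifferential inequality \prettyref{eq:sub_differential}. That inequality says $\alpha_{\max}/\!\left(-\int\varphi\,\d\mu_{\delta_0\varphi}\right)$ lies in the subdifferential of the convex function $q\mapsto s(q)\coloneqq\inf\{s:\mathcal{P}(s\varphi+q\psi)\le 0\}$ at $q=0$, so the sign of $\alpha_{\max}$ decides whether the constrained minimum $\inf_{q\le 0}s(q)$ coincides with the unconstrained minimum $\inf_{q\in\R}s(q)=\delta_\Z$ (when $\alpha_{\max}\ge 0$) or with $s(0)=\delta_0$ (when $\alpha_{\max}\le 0$). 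Your proposal never engages with this mechanism, and precisely at the step you yourself identify as the chief obstacle.

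Concretely, your justification of $\delta_\N\le\delta_\Z$ for $\alpha_{\max}\ge 0$ -- that ``the reflexion rule $\psi_1$ only modifies a finite boundary layer, [so] the variational expression for $\delta_\N$ is dominated by that for $\delta_\Z$'' -- misplaces where the boundedness assumption $\psi\le\psi_1\le\psi+M$ enters. That assumption (via \prettyref{lem:CombinatoriaKeyObservation}) is what makes the variational formula $\mathcal{P}(g,\N)=\inf_{q\le 0}\mathcal{P}(g+q\psi)$ hold in the first place, \emph{independently} of the sign of the drift. The variational expression $\inf_{q\le 0}$ dominates $\inf_{q\in\R}$ unconditionally (equivalently, $\delta_\N\ge\delta_\Z$ always, as you correctly note). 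If your ``finite boundary layer'' argument really forced the two to coincide, it would give $\delta_\N=\delta_\Z$ for \emph{all} drifts, contradicting the case $\alpha_{\max}<0$. What is genuinely needed is the drift-dependent fact that the minimiser of $q\mapsto s(q)$ lies in $(-\infty,0]$ when $\alpha_{\max}\ge 0$ -- exactly \prettyref{lem:inf_sq_criterion}.

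By contrast, your treatment of $\alpha_{\max}<0$ -- trivial upper bound $\delta_\N\le\delta_0$ plus lower bound via $\mu_{\delta_0\varphi}(R(\FI_\N))=1$ and the mass distribution principle -- is a valid route that differs from the paper, which obtains $\delta_\N=\delta_0$ directly from the pressure formula; your approach buys the measure statement $\mu_{\delta_0\varphi}(R(\FI_\N))=1$ for free rather than as an afterthought. One small inaccuracy there: you appeal to ``Markov chains with negative drift and uniformly bounded increments,'' but $\psi$ is only bounded below by $-1$, not above; the recurrence still holds (Birkhoff and \prettyref{lem:CombinatoriaKeyObservation} suffice), but the justification as stated does not. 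The assembly of the trichotomy from \prettyref{thm:HD_Transient} and \prettyref{thm:DimGap_non-reflective}, and the measure statements for $\alpha_{\max}>0$ via Birkhoff, match the paper and are fine.
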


We have named the three cases in line with Avila and Lyubich \cite{MR2373353},
who studied the dynamics of complex Feigenbaum maps with periodic
combinatorics. Note that for both the lean and the balanced case we
necessarily have $\dim_{H}\left(R\left(\FI_{\N}\right)\right)=\delta_{0}$,
whereas in the black hole case $\delta_{0}=\dim_{H}\left(T^{+}\left(\FI_{\N}\right)\right)$.

In accordance with contributions in \cite[Theorem E]{MR2373353},
we can distinguish two cases with respect to the conformal measure.
If the $\delta_{0}$ conformal measure (in this context, the Gibbs
measure $\mu_{\delta_{0}\varphi}$) is conservative--that is, $\mu_{\delta_{0}\varphi}\left(R\left(\FI_{\N}\right)\right)=1$--then
the hyperbolic dimension and Hausdorff dimension of the `Julia set'
are equal, i.\,e\@., in our situation, we do not observe a dimension
gap, $\dim_{H}\left(R\left(\FI_{\N}\right)\right)=\dim_{H}\left(\Rep\left(\FI_{\N}\right)\right)$.
Conversely, if $\mu_{\delta_{0}\varphi}$ is dissipative--that is,
$\mu_{\delta_{0}\varphi}\left(T^{+}\left(\FI_{\N}\right)\right)=1$--then
the system exhibits a dimension gap, in our situation, $\dim_{H}\left(\Rep\left(\FI_{\N}\right)\right)>\dim_{H}\left(R\left(\FI_{\N}\right)\right)$.

Our trichotomy is also closely related to the one in \cite[Theorem 7]{MR3289914}.
Note that in our setting, in the balanced case, the drift could be
negative. This possibility is excluded in \cite[Theorem 7]{MR3289914}
by the \emph{`good drift property'}, namely that the probability
of $\left\{ \psi\ge k\right\} $ decays exponentially, whereas in
our example provided in \prettyref{subsec:Drift-without-dimension},
the probability of $\left\{ \psi\ge k\right\} $ decays only polynomially.
Hence, the trichotomy can be expressed under the good drift property
by $\alpha_{\max}<0$, $\alpha_{\max}=0$, and $\alpha_{\max}>0$.

Lastly, our findings align with the corresponding real Feigenbaum
dynamics of unimodal maps. As announced at the very beginning of this
paper, we will compare our results in detail with those in \cite{MR1438267,MR2959300,MR3610938}
below in \prettyref{sec:Applications-and-examples}: For $F_{\N}=L_{\lambda,\N}$
with $\lambda\in\left(0,1\right)$, we show that the three cases from
\prettyref{thm:Big_Theorem} can be distinguished by $\lambda<1/2$,
$\lambda=1/2$, and $\lambda>1/2$. In particular, $\lambda\in\left(1/2,1\right)$
corresponds to the black hole case, which is related to the existence
of a \emph{wild attractor} at $\left\{ +\infty\right\} $. This follows
from transitivity: Generic points have a dense orbit, while for Lebesgue
almost every $x\in\Rep\left(L_{\lambda,\N}\right)$, the $\omega$-limit
set satisfies $\omega\left(x\right)=\left\{ +\infty\right\} $. For
further discussion on how this observation connects to wild attractors
in unimodal maps, see Bruin and Todd \cite{MR3384890} as well as
\cite{MR3289914}.

\subsection{Gurevich pressure and phase transitions}

We say that $g:\Rep(f)\rightarrow\R$ is Hölder continuous if $g|_{\Rep(f)\cap J_{i}}$
is Hölder continuous with respect to the Euclidean metric and some
exponent $\theta\in\left(0,1\right)$ for each $i\in I$ with a H\"older
constant independent of $i$. In particular, $g|_{\Rep(f)\cap J_{i}}$
is uniformly continuous and has a unique continuous extension to $\overline{\Rep(f)\cap J_{i}}$,
which we denote by $g_{i}$.
\begin{rem*}
If $g:\Rep(f)\rightarrow\R$ Hölder continuous, then $g$ is uniformly
continuous in the following sense: For every $\varepsilon>0$ there
exists $\delta>0$ such that for each $i\in I$ and $x,y\in J_{i}\cap\Rep\left(f\right)$,
\[
\left|x-y\right|<\delta\implies\left|g\left(x\right)-g\left(y\right)\right|<\epsilon.
\]
In fact, for most arguments uniform continuity suffices. Hölder continuity
is essential for the existence of Gibbs measures. For ease of exposition
we restrict to Hölder continuous potentials in the following.
\end{rem*}
The \emph{Gurevich pressure} of a Hölder continuous function $g:\Rep\left(f\right)\rightarrow\R$
with respect to an expanding $C^{2}$-Markov interval map $f$ is
given, for $i\in I$, by
\[
\mathcal{P}\left(g,f\right)\coloneqq\limsup_{n\to\infty}\frac{1}{n}\log\sum_{x\in\Per_{n}\left(f\right)\cap J_{i}}\exp(S_{n}g\left(x\right)),
\]
where $S_{n}g\coloneqq\sum_{k=0}^{n-1}g\circ f^{k}$. This definition
is independent of $i\in I$ (see \prettyref{prop:gurevich_wrt_f}
for the proof and further details).

Adapted to our skew-periodic setting, we now introduce the notion
of \emph{Gurevich pressure with respect to }$\H\in\left\{ 0,\N,\Z\right\} $:
For $g:\Rep_{0}\rightarrow\R$ H\"older continuous, we denote its
\emph{$\H$-periodic extension} by 
\[
g_{\H}:\Rep_{0}+\H\to\R,\,x\mapsto g\left(x-\left\lfloor x\right\rfloor \right)
\]
and we write
\[
\mathcal{P}\left(g,\H\right):=\mathcal{P}\left(g_{\H},\FI_{\H}\right)\;\text{and especially, }\mathcal{P}\left(g\right)\coloneqq\mathcal{P}\left(g,0\right).
\]

The main insight regarding these pressure functions, as established
in \prettyref{thm:fibre_pressure_vs_base_pressure} and \prettyref{thm:Onesided_fibre-pressure_vs_base-pressure},
is captured by the following two \emph{variational formul\ae.} These
are not only crucial for our proofs but also of significant theoretical
interest in their own right:
\begin{align}
\mathcal{P}\left(g,\Z\right) & =\inf_{q\in\R}\mathcal{P}\left(g+q\psi\right),\label{eq:(2)}\\
\mathcal{P}\left(g,\N\right) & =\inf_{q\leq0}\mathcal{P}\left(g+q\psi\right).\label{eq:(3)}
\end{align}
As a partial consequence, we derive the following relations, which
are as well essential for the dimension results:
\begin{align}
\mathcal{P}\left(g,\Z\right)\,\,\leq\,\, & \mathcal{P}\left(g,\N\right)\leq\mathcal{P}\left(g\right),\label{eq:(1)}\\
\mathcal{P}\left(g,\N\right)\,\,=\,\, & \begin{cases}
\mathcal{P}\left(g,\Z\right), & \text{if }\inf_{q\leq0}\mathcal{P}(g+q\psi)=\inf_{q\in\R}\mathcal{P}(g+q\psi),\\
\mathcal{P}(g), & \text{if }\inf_{q>0}\mathcal{P}(g+q\psi)=\inf_{q\in\R}\mathcal{P}(g+q\psi).
\end{cases}\label{eq:(4)}
\end{align}

We call $s\mapsto\mathcal{P}\left(s\varphi,\H\right)$ the \emph{geometric
pressure function} for the skew-periodic map $\FI_{\H}$. It is standard
to verify (see for instance \cite[Theorem 2.1.3]{MR2003772}) that
\begin{equation}
\delta_{\H}=\inf\left\{ s\ge0:\mathcal{P}\left(s\varphi,\H\right)\leq0\right\} .\label{eq:delta_vs_pressure}
\end{equation}
This observation, together with the above pressure formul\ae, is
the key to proving the results of \prettyref{subsec:Dimension-gap},
see \prettyref{subsec:Thermodynamic-formalism} for more details.

Finally, we state our last main result, which concerns the emergence
of a second-order phase transition and its characterisation via an
asymptotic covariance criterion. For this, let $\mu_{g}$ be the unique
 $F_{0}$-invariant Gibbs measure as defined in \prettyref{subsec:Periodic-setting-and}
and recall the notion of \emph{asymptotic covariance }for general
$\varphi,\psi\in L^{2}(\mu_{g})$ given by
\begin{align*}
\cov_{g}\left(\varphi,\psi\right) & \coloneqq\sum_{i=0}^{\infty}\left(\int\varphi\cdot\psi\circ\FI_{0}^{i}\d\mu_{g}-\int\varphi\d\mu_{g}\int\psi\d\mu_{g}\right).
\end{align*}

\begin{thm}[Second-order phase transition]
\label{thm:Phasetransition} For the skew-periodic interval map $\FI_{\N}$
assume there exist $s_{0}>s_{1}>0$ and $q_{0}>0$ such that $\mathcal{P}\left(s_{1}\varphi+q_{0}\psi\right)<\infty$
and $\int\psi\d\mu_{s_{0}\varphi}=0.$ Then, the geometric pressure
function $s\mapsto\mathcal{P}\left(s\varphi,\N\right)$ exhibits a
second-order phase transition at $s_{0}$ if and only if $\cov_{s_{0}\varphi}\left(\varphi,\psi\right)\neq0.$
\end{thm}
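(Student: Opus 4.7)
The plan is to apply the variational formula \eqref{eq:(3)},
\[
\pi(s) := \mathcal{P}(s\varphi,\N) = \inf_{q\leq 0}\Pi(s,q),\qquad \Pi(s,q) := \mathcal{P}(s\varphi + q\psi),
\]
and analyse the joint pressure $\Pi$ in a neighbourhood of the critical point $(s_0,0)$. First I would verify that the hypothesis $\mathcal{P}(s_1\varphi+q_0\psi)<\infty$, combined with the convexity of the Gurevich pressure, the sign conditions $\varphi\leq 0$ and $\psi\geq -1$, and the finiteness of $\mathcal{P}(s_0\varphi)$, forces $\Pi$ to be finite on an open rectangle around $(s_0,0)$. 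Together with the strong regularity of $F_0$ and the thermodynamic formalism for expanding $C^2$--Markov maps developed earlier in the paper, this yields that $\Pi$ is real-analytic and strictly convex there, with $\partial_q\Pi(s,q)=\int\psi\,d\mu_{s\varphi+q\psi}$, $\partial_q^2\Pi(s_0,0)>0$, and $\partial_s\partial_q\Pi(s_0,0)$ equal to a nonzero multiple of $\cov_{s_0\varphi}(\varphi,\psi)$. By hypothesis, $\partial_q\Pi(s_0,0)=\int\psi\,d\mu_{s_0\varphi}=0$.

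Next, the implicit function theorem applied to $\partial_q\Pi=0$ at $(s_0,0)$ produces a real-analytic unconstrained minimizer $s\mapsto q^\ast(s)$ defined near $s_0$ with $q^\ast(s_0)=0$ and
\[
(q^\ast)'(s_0)=-\frac{\partial_s\partial_q\Pi(s_0,0)}{\partial_q^2\Pi(s_0,0)},
\]
which vanishes iff $\cov_{s_0\varphi}(\varphi,\psi)=0$. Setting $\sigma(s):=\Pi(s,q^\ast(s))$ and $\tau(s):=\Pi(s,0)$, the envelope theorem gives $\sigma(s_0)=\tau(s_0)$ and $\sigma'(s_0)=\tau'(s_0)$, while the Schur-complement identity for the Hessian of $\Pi$ at $(s_0,0)$ yields
\[
\tau''(s_0)-\sigma''(s_0)=\frac{(\partial_s\partial_q\Pi(s_0,0))^2}{\partial_q^2\Pi(s_0,0)}\geq 0,
\]
with strict inequality precisely when $\cov_{s_0\varphi}(\varphi,\psi)\neq 0$. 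Since $\pi(s)=\sigma(s)$ when $q^\ast(s)\leq 0$ and $\pi(s)=\tau(s)$ when $q^\ast(s)\geq 0$ (the boundary $q=0$ being binding in the convex problem precisely when the unconstrained optimum is positive), the function $\pi$ coincides locally near $s_0$ with $\sigma$ on one side and $\tau$ on the other whenever $(q^\ast)'(s_0)\neq 0$.

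If $\cov_{s_0\varphi}(\varphi,\psi)\neq 0$, then $q^\ast$ crosses zero transversally at $s_0$; both $\sigma$ and $\tau$ are $C^\infty$ near $s_0$ and agree in value and first derivative at $s_0$, but have distinct second derivatives there, so $\pi\in C^1\setminus C^2$ at $s_0$: a second-order phase transition. Conversely, if $\cov_{s_0\varphi}(\varphi,\psi)=0$, the Schur-complement correction vanishes and $\sigma,\tau$ share value, first, and second derivative at $s_0$; regardless of the sign behaviour of $q^\ast$ near $s_0$, the piecewise-defined $\pi$ inherits the common second-order Taylor expansion and is therefore $C^2$ at $s_0$, excluding a second-order phase transition.

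The main technical obstacle is the rigorous justification of the real-analyticity of $\Pi$ on a neighbourhood of $(s_0,0)$ together with the derivative identities linking $\partial_s\partial_q\Pi(s_0,0)$ to $\cov_{s_0\varphi}(\varphi,\psi)$ and the strict positivity $\partial_q^2\Pi(s_0,0)>0$; the latter is equivalent, via Liv\v{s}ic's theorem, to $\psi$ not being an $F_0$-coboundary under $\mu_{s_0\varphi}$, which follows from $\psi$ attaining both the value $-1$ and positive integer values. Once these thermodynamic-formalism inputs are secured in the countably branched, unbounded setting, the remainder is a routine convex-analytic computation.
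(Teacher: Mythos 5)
Your proposal matches the paper's proof essentially step for step: both invoke the variational formula $\mathcal{P}(s\varphi,\N)=\inf_{q\leq 0}\mathcal{P}(s\varphi+q\psi)$, establish local finiteness and real-analyticity of $(s,q)\mapsto\mathcal{P}(s\varphi+q\psi)$ near $(s_0,0)$, apply the implicit function theorem to $\partial_q\mathcal{P}(s\varphi+q\psi)=0$ to obtain the critical curve $q(s)$ with $q'(s_0)=-\cov_{s_0\varphi}(\varphi,\psi)/\cov_{s_0\varphi}(\psi,\psi)$, observe the piecewise identity $\mathcal{P}(s\varphi,\N)=\mathcal{P}(s\varphi+q(s)\psi)$ when $q(s)\leq 0$ versus $\mathcal{P}(s\varphi)$ when $q(s)>0$, and compare one-sided second derivatives at $s_0$ via what you package as the Schur complement (the paper computes the same quantity directly as $\cov_{s_0\varphi}(\varphi,\varphi)-\cov_{s_0\varphi}(\varphi,\psi)^2/\cov_{s_0\varphi}(\psi,\psi)$). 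The only cosmetic difference is your use of envelope-theorem and Schur-complement language, which is equivalent to the paper's explicit calculation; the argument is the same.
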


The proof will be postponed to \prettyref{subsec:Phase-transition}.

\section{Applications and examples\label{sec:Applications-and-examples}}

\subsection{Reflective simple random walk}

To demonstrate the phenomenon of a second-order phase transition,
we recall one of the simplest examples with only two full branches,
which models a simple random walk as presented in \cite{MR4373996}.
We set up a classical \emph{one-step random walk on $\Z$} using a
skew-periodic interval map, by fixing $c_{1},c_{2}\in\left(0,1\right)$
with $c_{1}+c_{2}\leq1$ and considering the map
\[
F_{0}:x\mapsto\begin{cases}
x/c_{1}, & \text{for }x\in(0,c_{1})\eqqcolon J_{1},\\
\left(x-1+c_{2}\right)/c_{2}, & \text{for }x\in(1-c_{2},1)\eqqcolon J_{2}.
\end{cases}
\]
We define the step function $\Psi_{\Z}$ via the step length function
$\psi\coloneqq-\1_{J_{1}}+\1_{J_{2}}$ and accordingly, $\text{\ensuremath{\Psi_{\N}}}$
with the reflexion rule $\ensuremath{\psi_{1}\coloneqq\psi+1}$. Note
that $\Rep\left(F_{0}\right)$ is a Cantor set with Hausdorff dimension
$\delta_{0}<1$ if and only if $c_{1}+c_{2}<1$, where $\delta_{0}$
is the unique number $s>0$ with $c_{1}^{s}+c_{2}^{s}=1$. Otherwise,
$\Rep\left(F_{0}\right)$ is the unit interval\noun{.}
\begin{description}
\item [{Case~$\boldsymbol{c_{1}>c_{2}}$}] In this case we have $\alpha_{\max}<0$,
and we exhibit the \textbf{lean case}, i.e\@., \newline $\dim_{H}\left(R\left(\FI_{\N}\right)\right)>\dim_{H}\left(T^{+}\left(\FI_{\N}\right)\right)$
and $\mu_{\delta_{0}\varphi}\left(R\left(\FI_{\N}\right)\right)=1$.
\item [{Case~$\boldsymbol{c_{1}=c_{2}}$}] In this case we have $\alpha_{\max}=0$,
and we exhibit the \textbf{balanced case}, i.e\@., \newline $\dim_{H}\left(R\left(\FI_{\N}\right)\right)=\dim_{H}\left(T^{+}\left(\FI_{\N}\right)\right)$
and $\mu_{\delta_{0}\varphi}\left(R\left(\FI_{\N}\right)\right)=1$.
\item [{Case~$\boldsymbol{c_{1}<c_{2}}$}] In this case we have $\alpha_{\max}>0$,
and we exhibit the \textbf{black hole case}, i.e\@., \newline we
have a dimension drop $\dim_{H}\left(R\left(\FI_{\N}\right)\right)<\dim_{H}\left(T^{+}\left(\FI_{\N}\right)\right)$
and $\mu_{\delta_{0}\varphi}\left(T^{+}\left(\FI_{\N}\right)\right)=1.$
\end{description}
This family of examples also results in systems with and without a
second-order phase transition for $F_{\N}$, as we demonstrate next:
The condition $0=\int\psi\d\mu_{s\varphi+q(s)\psi}=-\mu_{s\varphi+q(s)\psi}\left(J_{1}\right)+\mu_{s\varphi+q(s)\psi}\left(J_{2}\right)$
is equivalent to 
\[
c_{2}^{s}\e^{q\left(s\right)}=c_{1}^{s}\e^{-q\left(s\right)}\iff s\left(\log c_{1}-\log c_{2}\right)/2=q\left(s\right),
\]
providing us for all $s\in\R$ with a solution $q\left(s\right)$
for $0=\int\psi\d\mu_{s\varphi+q(s)\psi}$. Further, for $s_{0}=0$
and $c_{1},c_{2}\in\left(0,1\right)$ arbitrary, or $c_{1}=c_{2}$
and $s_{0}$ arbitrary, we have $q(s_{0})=0$.

\emph{Case 1}: For $c_{1}=c_{2}$ we have $q(s_{0})=0$ for all $s_{0}\in\R$.
Furthermore, $\varphi$ is constant such that for all $s_{0}\in\R$,
\[
\cov_{s_{0}\varphi}\left(\varphi,\psi\right)=0.
\]
 Hence, by \prettyref{thm:Phasetransition} there is no phase transition
in this case--in fact, $s\mapsto\mathcal{P}\left(s\varphi,\N\right)$
defines a straight line.

\emph{Case 2}: For $c_{1}\neq c_{2}$ we have $q\left(s_{0}\right)=0$
only for $s_{0}=0$. Hence, in this case $\mu_{0\varphi}$ is the
$\left(1/2,1/2\right)$-Bernoulli measure and 
\begin{align*}
\cov_{0\varphi}\left(\varphi,\psi\right) & =\frac{-1}{2}\left(\frac{1}{2}\left(\log c_{1}-\log c_{2}\right)\right)+\frac{1}{2}\left(\frac{1}{2}\left(-\log c_{1}+\log c_{2}\right)\right)\\
 & =\frac{1}{2}\left(\log c_{2}-\log c_{1}\right)\neq0
\end{align*}
and by \prettyref{thm:Phasetransition} there necessarily is a second-order
phase transition at $s_{0}=0$. In fact, $\mathcal{P}(s\varphi+q\psi)=\log\left(c_{1}^{s}\e^{-q}+c_{2}^{s}\e{}^{q}\right)$,
$\mathcal{P}(s\varphi)=\log\left(c_{1}^{s}+c_{2}^{s}\right)$ and
$\inf_{q}\mathcal{P}(s\varphi+q\psi)=\log\left(2\left(c_{1}c_{2}\right)^{s/2}\right).$
By combining the variational formul\ae~\prettyref{eq:(2)} and \prettyref{eq:(3)}
with \prettyref{eq:(4)}, we have
\[
\mathcal{P}\left(s\varphi,\N\right)=\begin{cases}
\mathcal{P}\left(s\varphi,\Z\right), & \text{if }c_{1}^{s}\leq c_{2}^{s},\\
\mathcal{P}(s\varphi), & \text{if }c_{1}^{s}>c_{2}^{s},
\end{cases}
\]
and, assuming with no loss of generality $c_{1}<c_{2}$, we particularly
find
\[
\mathcal{P}\left(s\varphi,\N\right)=\begin{cases}
\log2+s/2\cdot\log\left(c_{1}c_{2}\right), & \text{if }s\geq0,\\
\log\left(c_{1}^{s}+c_{2}^{s}\right), & \text{if }s<0.
\end{cases}
\]
This confirms that in this case, a second-order phase transition occurs
at $s_{0}=0$.

\subsection{$\mathfrak{a}$-Lüroth maps\label{subsec:vanstrien}}

The following examples are intended to cover earlier work initialised
by van Strien and worked out mainly by Stratmann and Vogt \cite{MR1438267}
as well as Bruin and Todd \cite{MR2959300,MR3384890}.

In this and the following subsection, let us consider two examples
of $\mathfrak{a}$-Lüroth maps as introduced in \cite{MundayKessStrat10}.

First, we consider the family of $\mathfrak{a}(\lambda)$-\emph{Lüroth
maps} $L_{\lambda}$, $\lambda\in\left(0,1\right)$ which are given
by the countable partitions $\mathfrak{a}(\lambda)=\left\{ J_{k}\coloneqq\left(\lambda^{k},\lambda^{k-1}\right):k\ge1\right\} $
as follows, for $x\in J_{n}$,
\[
L_{\lambda}\left(x\right)=\left(x-t_{n}\right)/\left|J_{n}\right|,
\]
where $t_{n}\coloneqq\sum_{k=n+1}^{\infty}\left|J_{k}\right|$ and
$\left|J_{k}\right|$ refers to the length of the interval $J_{k}$.
Then the step function of the non-reflective $\mathfrak{a}\left(\lambda\right)$-Lüroth
system $L_{\lambda,\Z}$ is given by the step length function $\psi\left(x\right)\coloneqq k-2$,
for $x\in J_{k},$ $k\ge1$. For the reflective $\mathfrak{a}\left(\lambda\right)$-Lüroth
system $L_{\lambda,\N}$ we also choose $\psi_{1}\coloneqq\psi+1$.
The graphs of the skew-periodic interval maps $L_{\lambda,\Z}$ and
$L_{\lambda,\N}$ are given in \prettyref{fig:Lueroth-system-non-refelctive+reflective-1}.
We will use the short-hand notation $\delta_{\H}(\lambda)\coloneqq\delta\left(L_{\lambda,\H}\right)$
for $\H\in\left\{ 0,\N,\Z\right\} $.

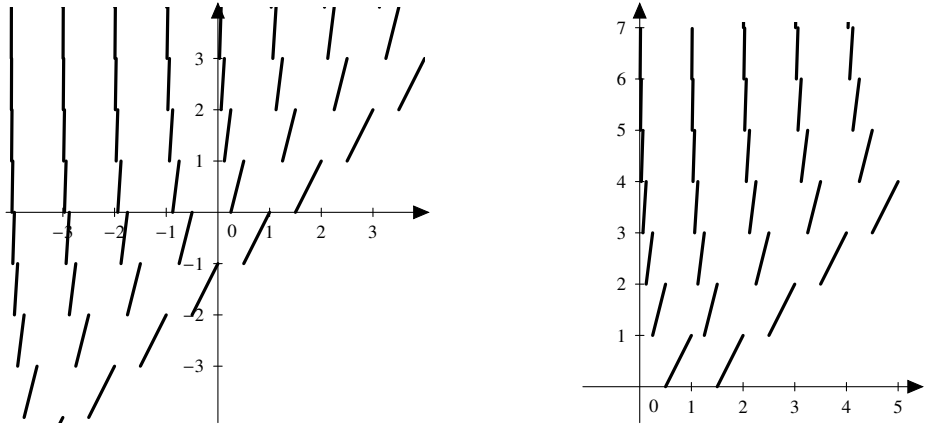
\begin{figure}[H]
\begin{tikzpicture}[scale=.8,line cap=round,line join=round,>=triangle 45,x=.85cm,y=0.85cm] \draw[->,color=black] (-4.1,0.) -- (4.1,0.); \foreach \x in {-3,-2,-1,1,2,3} \draw[shift={(\x,0)},color=black] (0pt,2pt) -- (0pt,-2pt) node[below] {\scriptsize $\x$}; \draw[->,color=black] (0.,-4.1) -- (0.,4.1); \foreach \y in {-3,-2,-1,1,2,3} \draw[shift={(0,\y)},color=black] (2pt,0pt) -- (-2pt,0pt) node[left] {\scriptsize $\y$}; \draw[color=black] (0pt,-9pt) node[right] {\scriptsize $0$}; \clip(-4.1,-4.1) rectangle (4.0,4.0);

\foreach \k in {-4,-3,-2,-1,0,1,2,3,4 }
\foreach \m in {1,2,3,4,5,6,7,8,9 }
\draw[line width=1.2pt, samples=3,domain={2^(-\m)+\k}:{2^(-\m+1)+\k}] 
plot(\x,{2^\m*(\x-\k)+\m+\k-3});

  \end{tikzpicture}\hspace*{2cm}\begin{tikzpicture}[scale=.8,line cap=round,line join=round,>=triangle 45,x=.85cm,y=0.85cm] \draw[->,color=black] (-1.1,0.) -- (5.5,0.); \foreach \x in {1,2,3,4,5} \draw[shift={(\x,0)},color=black] (0pt,2pt) -- (0pt,-2pt) node[below] {\scriptsize $\x$}; \draw[->,color=black] (0.,-0.7) -- (0.,7.5); \foreach \y in {1,2,3,4,5,6,7} \draw[shift={(0,\y)},color=black] (2pt,0pt) -- (-2pt,0pt) node[left] {\scriptsize $\y$}; 
\draw[color=black] (0pt,-9pt) node[right] {\scriptsize $0$}; 
\clip(-.1,-0.1) rectangle (5.5,7.1); 

\foreach \m in {1,2,3,4,5,6,7 }
\draw[line width=1.2pt, samples=10,domain={2^(-\m)}:{2^(-\m+1)}] 
plot(\x,{2^\m*(\x)+\m-2});  
\foreach \k in {1,2,3,4 }
\foreach \m in {1,2,3,4,5,6,7 }
\draw[line width=1.2pt, samples=10,domain={2^(-\m)+\k}:{2^(-\m+1)+\k}] 
plot(\x,{2^\m*(\x-\k)+\m+\k-3}); 
  
\end{tikzpicture}

\caption{To the left the graph of $L_{\lambda,\Z}$ modelling the non-reflective
$\mathfrak{a}(\lambda)$-Lüroth system and to the right the graph
of $L_{\lambda,\N}$ modelling the reflective $\mathfrak{a}(\lambda)$-Lüroth
system.\label{fig:Lueroth-system-non-refelctive+reflective-1}}
\end{figure}

The reflective $\mathfrak{a}\left(\lambda\right)$-Lüroth map $L_{\lambda,\N}$
is naturally conjugate to a dynamical system $V=V_{\lambda}$ on $(0,1)$
by the conjugacy
\[
p:(0,1)\to\Rep\left(L_{\lambda,\N}\right),\,x\mapsto\frac{\lambda^{-k+1}}{1-\lambda}(x-\lambda^{k})+k-1,\textnormal{ for }x\in J_{k},\,k\ge1.
\]
This family of systems $\left(V_{\lambda}:\lambda\in(0,1)\right)$,
which is given by 
\begin{align*}
V_{\lambda}=V:(0,1) & \to(0,1),\,x\mapsto\begin{cases}
\frac{{x-\lambda}}{1-\lambda}, & x\in(\lambda,1),\\
\frac{{x-\lambda^{k+1}}}{\lambda(1-\lambda)}, & x\in(\lambda^{k+1},\lambda^{k}),\,k\ge1,
\end{cases}
\end{align*}
has been studied in \cite{MR1438267,MR2959300,MR3610938} and the
Hausdorff dimension of the transient and the recurrent part has explicitly
been determined therein. In fact, by the conjugation there is a one-to-one
correspondence between the recurrent set of $L_{\lambda,\N}$ and
the \emph{recurrent set} of $V_{\lambda}$ given by 
\[
\mathbf{R}_{\lambda}\coloneqq\left\{ x\in(0,1):\exists k>1\text{ with }V_{\lambda}^{n}(x)\geq1/k\text{ for infinitely many }n\in\N\right\} 
\]
and, similarly, for the \emph{transient set} given by 
\[
\mathbf{T}_{\lambda}\coloneqq\left\{ x\in(0,1)\colon V_{\lambda}^{n}(x)\to0\right\} .
\]
Then our formalism reproves also the fact that
\begin{equation}
\dim_{H}(\mathbf{R}_{\lambda})=\dim_{H}(\mathbf{T}_{1-\lambda})=\begin{cases}
1, & \lambda\le1/2,\\
\frac{-\log4}{\log\left(\lambda\left(1-\lambda\right)\right)}, & \lambda>1/2,
\end{cases}\label{eq:bt}
\end{equation}
which is one of the main results obtained in \cite{MR1438267,MR2959300}.
See \prettyref{fig:Lambda-Transience_Recurrent_Lueroth-Spectrum}
for the graph of this function and for the conjugate system see \prettyref{fig:Conjugacy_Gauss+Lueroth-system_reflective}.
\begin{figure}[h]
\pgfplotsset{width=9cm,compat=1.9}
\begin{tikzpicture}[line cap=round,line join=round,>=triangle 45]
\begin{axis}[ymin=-0.1,ymax=1.1,xmin=0,xmax=1.1,     axis equal,     axis lines=middle,     axis line style={->},     tick style={color=black,line width=1.1pt},     xtick={0,0.5,1}, xticklabels={$0$,$1/2$,$1$}, ytick={0,1},yticklabels={$0$, $1$} ]     
\addplot 
[         domain=0:0.5, samples=497,line width=1.2pt, dashed 
] {-ln(4.0)/ ln(x*(1-x))}; 
\addplot 
[         domain=0:0.5, samples=20,line width=1.2pt,   
] {1};
\addplot 
[         domain=0.5:1, samples=497,line width=1.2pt,   
] {-ln(4.0)/ ln(x*(1-x))};
\addplot 
[         domain=0.5:1, samples=20,line width=1.2pt, dashed   
] {1};
\addplot 
[         domain=0:0.15, samples=20,line width=1.2pt,   
] (1,x);
\addplot [        dashed, domain=0:0.19, samples=20,line width=1.2pt,   
] (0,x);
 
\end{axis}
 \end{tikzpicture}\caption{The dimensions of the transient part $\lambda\protect\mapsto\dim_{H}\left(T^{+}\left(L_{\lambda,\N}\right)\right)=\dim_{H}\left(\mathbf{T}_{\lambda}\right)$
(dashed line) and recurrent part $\lambda\protect\mapsto\dim_{H}\left(R\left(L_{\lambda,\N}\right)\right)=\dim_{H}\left(\mathbf{R}_{\lambda}\right)$
(solid line) as a function of the parameter $\lambda$ for the Lüroth
system considered in \prettyref{subsec:vanstrien}. \label{fig:Lambda-Transience_Recurrent_Lueroth-Spectrum}}
\end{figure}
 To derive \eqref{eq:bt} from our theory, first note that $\dim_{H}(\mathbf{R}_{\lambda})=\dim_{H}\left(R\left(L_{\lambda,\N}\right)\right)$
and $\dim_{H}(\mathbf{T}_{\lambda})=\dim_{H}\left(T^{+}\left(L_{\lambda,\N}\right)\right)$
because $p|_{J_{k}}:J_{k}\rightarrow(k,k+1)$ is bi-Lipschitz for
each $k\ge1$. The geometric potential of $L_{\lambda}$ is given,
for $x\in J_{k}$, by 
\[
\varphi(x):=\varphi_{\lambda}\left(x\right)\coloneqq\log\left(\left(1-\lambda\right)\lambda^{k-1}\right).
\]
Clearly, we have $\delta_{0}:=\delta_{0}(\lambda)=1$ for all $\lambda\in(0,1)$.
For $s,q\in\R$ we have
\begin{align}
z_{\lambda}(s,q):= & \exp\left(\mathcal{P}\left(s\varphi_{\lambda}+q\psi\right)\right)=(1-\lambda)^{s}\left(\sum_{k=1}^{\infty}\lambda^{(k-1)s}\e^{q(k-2)}\right)=\begin{cases}
\frac{(1-\lambda)^{s}\e^{-q}}{1-\lambda^{s}\e^{q}}, & \e^{q}<\lambda^{-s},\\
+\infty, & \e^{q}\ge\lambda^{-s.}
\end{cases}\label{eq:bt2}
\end{align}
Now, suppose that $\e^{q}<\lambda^{-s}$. Then we have
\begin{equation}
\int\psi\d\mu_{s\varphi_{\lambda}+q\psi}=\frac{\partial z_{\lambda}}{\partial q}(s,q)=-\frac{(1-\lambda)^{s}\left(\e^{-q}-2\lambda^{s}\right)}{\left(1-\lambda^{s}\e^{q}\right)^{2}}.\label{eq:bt3}
\end{equation}
In particular, for $s=1$ and $q=0$ we have 
\[
\alpha_{\max}(\lambda)=\int\psi\d\mu_{\delta_{0}(\lambda)\varphi_{\lambda}}=\frac{2\lambda-1}{1-\lambda}
\]
and by \prettyref{prop:Bowen_for-the-skew-periodic-1} and \prettyref{thm:Big_Theorem}
we can deduce the following trichotomy.
\begin{description}
\item [{Case~$\boldsymbol{\lambda<1/2}$}] In this case we have $\alpha_{\max}\left(\lambda\right)<0$,
and we exhibit the \textbf{lean case}, i.e\@., \newline $1=\dim_{H}\left(R\left(L_{\lambda,\N}\right)\right)>\dim_{H}\left(T^{+}\left(L_{\lambda,\N}\right)\right)=\delta_{\Z}\left(\lambda\right)$
and $\mu_{\delta_{0}\varphi}\left(R\left(L_{\lambda,\N}\right)\right)=1$.
\item [{Case~$\boldsymbol{\lambda=1/2}$}] In this case we have $\alpha_{\max}\left(\lambda\right)=0$,
and we exhibit the \textbf{balanced case}, i.e\@., \newline $1=\dim_{H}\left(R\left(L_{\lambda,\N}\right)\right)=\dim_{H}\left(T^{+}\left(L_{\lambda,\N}\right)\right)$
and $\mu_{\delta_{0}\varphi}\left(R\left(L_{\lambda,\N}\right)\right)=1$.
\item [{Case~$\boldsymbol{\lambda>1/2}$}] In this case we have $\alpha_{\max}\left(\lambda\right)>0$,
and we exhibit the \textbf{black hole case}, i.e\@., \newline $\delta_{\Z}(\lambda)=\delta_{\N}(\lambda)=\dim_{H}\left(R\left(L_{\lambda,\N}\right)\right)<\dim_{H}\left(T^{+}\left(L_{\lambda,\N}\right)\right)=1$,
we have a dimension drop, and $\mu_{\delta_{0}\varphi}\left(T^{+}\left(L_{\lambda,\N}\right)\right)=1.$
\end{description}
Note that a dimension gap occurs if and only if $\lambda>1/2$ (cf.
\prettyref{thm:Big_Theorem}). We finally verify that 
\begin{equation}
\delta_{\Z}(\lambda)=\frac{-\log4}{\log\left(\lambda\left(1-\lambda\right)\right)}.\label{eq:bt1}
\end{equation}
By Theorem \ref{thm:fibre_pressure_vs_base_pressure} (1), the unique
solution $(s_{0},q_{0})$ of $z_{\lambda}(s,q)=1$ and  $\int\psi\d\mu_{s\varphi_{\lambda}+q\psi}=0$
is of the form $s_{0}=\delta_{\Z}(\lambda)$. By \eqref{eq:bt3} we
have $\e^{q_{0}}=\lambda^{-s_{0}}/2$. Substituting this into $z_{\lambda}(s,q)=1$
and using \eqref{eq:bt2}, gives $(1-\lambda)^{s_{0}}2\lambda^{s_{0}}=1/2$.
Since $\delta_{\Z}(\lambda)=\delta_{\Z}(1-\lambda)$, the proof of
\eqref{eq:bt} is complete.

Next, we apply \prettyref{thm:Phasetransition} to derive that $s\mapsto\mathcal{P}\left(s\varphi,\N\right)$
has a second-order phase transition at $s_{0}=-\log2/\log\lambda$,
reproving another main result of \cite{MR2959300}. For each $\lambda\in(0,1)$
we have that $s_{0}=-\log2/\log\lambda$ solves $q\left(s_{0}\right)=0$.
For $n\ge1$, we have 
\[
\mu_{s_{0}\varphi}\left(J_{n}\right)=\frac{(1-\lambda)^{s_{0}}\lambda^{(n-1)s_{0}}}{\sum_{k\ge1}(1-\lambda)^{s_{0}}\lambda^{(k-1)s_{0}}}=\frac{\lambda^{(n-1)s_{0}}}{\sum_{k\ge1}\lambda^{(k-1)s_{0}}}=\frac{2^{-(n-1)}}{\sum_{k\ge1}2^{-(k-1)}}=2^{-n}
\]
and consequently, 
\[
\int\varphi\d\mu_{s_{0}\varphi}=\sum_{n\ge1}\log((1-\lambda)\lambda^{n-1})2^{-n}=\log((1-\lambda)\lambda),
\]
and 
\begin{align*}
\cov_{s_{0}\varphi}\left(\varphi,\psi\right) & =\int(\varphi-\mu_{s_{0}\varphi}(\varphi))\cdot(\psi-\mu_{s_{0}\varphi}(\psi))\d\mu_{s_{0}\varphi}\\
 & =\sum_{n\ge1}\left(\log\left(\left(1-\lambda\right)\lambda^{n-1}\right)-\log((1-\lambda)\lambda)\right)(n-2)2^{-n}\\
 & =\sum_{n\ge1}\left((n-2)\log\lambda\right)(n-2)2^{-n}=2\log\lambda\neq0.
\end{align*}
Since  $\mathcal{P}(s\varphi+q\psi)$ is finite in a neighbourhood
of $(s_{0},0)$ by \prettyref{eq:bt2}, \prettyref{thm:Phasetransition}
implies the existence of a second-order phase transition of $s\mapsto\mathcal{P}\left(s\varphi,\N\right)$
at $s_{0}$.

\subsection{Drift without dimension gap for non-reflective system\label{subsec:Drift-without-dimension}}

This example demonstrates that the second condition in \prettyref{thm:DimGap_non-reflective}
is meaningful for the absence of a dimension gap. For this we consider
again an $\mathfrak{a}$-Lüroth map this time with partition $\mathfrak{a}=\left(J_{k}:k\in\N\right)$
given by $J_{0}:=(a,1)$, $J_{k}:=\left(a/\left(k+1\right),a/k\right)$,
$k\geq1$, for some fixed $a\in\left(0,1\right)$ and step length
function $\psi(x)\coloneqq\left\lfloor \sqrt{k}\right\rfloor -1$
for $x\in J_{k}$.  Then $\delta_{0}=1$ and $\mu_{\delta_{0}\varphi}$
is equal to the Lebesgue measure restricted to the unit interval.
A straightforward calculation shows that if $a$ is chosen sufficiently
close to $0$, we have $-1<\alpha_{\max}<0$ and simultaneously, for
all $q>0$,
\[
\sum_{\FI_{0}(x)=x}\exp\left(\delta_{0}\varphi\left(x\right)+q\psi\left(x\right)\right)=+\infty.
\]
 Hence, by \prettyref{thm:DimGap_non-reflective}, we have no dimension
gap despite the presence of a negative drift.

\subsection{A non-linear system--the Gauss system}

\begin{figure}[H]
\begin{tikzpicture}[scale=.8, line cap=round,line join=round,>=triangle 45,x=.85cm,y=0.85cm] \draw[->,color=black] (-4.1,0.) -- (4.1,0.); \foreach \x in {-3,-2,-1,1,2,3} \draw[shift={(\x,0)},color=black] (0pt,2pt) -- (0pt,-2pt) node[below] {\scriptsize $\x$}; \draw[->,color=black] (0.,-4.1) -- (0.,4.1); \foreach \y in {-3,-2,-1,1,2,3} \draw[shift={(0,\y)},color=black] (2pt,0pt) -- (-2pt,0pt) node[left] {\scriptsize $\y$}; \draw[color=black] (0pt,-9pt) node[right] {\scriptsize $0$}; \clip(-4.1,-4.1) rectangle (4.1,4.1); 
  
\draw[line width=1.2pt, samples=100,domain=-2.9:-2] 
plot(\x,{1/(\x+3)-5}); 
\draw[line width=1.2pt, samples=100,domain=-1.9:-1] 
plot(\x,{1/(\x+2)-4}); 

\draw[line width=1.2pt, samples=100,domain=-0.9:0] 
plot(\x,{1/(\x+1)-3}); \draw[line width=1.2pt, samples=100,domain=0.1:1] 
plot(\x,{1/(\x)-2}); \draw[line width=1.2pt, samples=100,domain=1.1:2] 
plot(\x,{1/(\x-1)-1}); \draw[line width=1.2pt, samples=100,domain=2.1:3] 
plot(\x,{1/(\x-2)}); \draw[line width=1.2pt, samples=100,domain=3.1:4] 
plot(\x,{1/(\x-3)+1});

  \end{tikzpicture}\hspace*{2cm}\begin{tikzpicture}[scale=.8,line cap=round,line join=round,>=triangle 45,x=.85cm,y=0.85cm] \draw[->,color=black] (-1.1,0.) -- (5.5,0.); \foreach \x in {1,2,3,4,5} \draw[shift={(\x,0)},color=black] (0pt,2pt) -- (0pt,-2pt) node[below] {\scriptsize $\x$}; \draw[->,color=black] (0.,-0.7) -- (0.,7.5); \foreach \y in {1,2,3,4,5,6,7} \draw[shift={(0,\y)},color=black] (2pt,0pt) -- (-2pt,0pt) node[left] {\scriptsize $\y$}; 
\draw[color=black] (0pt,-9pt) node[right] {\scriptsize $0$}; 
\clip(-.1,-0.1) rectangle (5.5,7.1);

\draw[line width=1.2pt, samples=100,domain=0.1:1] 
plot(\x,{1/(\x)-1}); 
\draw[line width=1.2pt, samples=100,domain=1.1:2] 
plot(\x,{1/(\x-1)-1}); 
\draw[line width=1.2pt, samples=100,domain=2.1:3] 
plot(\x,{1/(\x-2)}); 
\draw[line width=1.2pt, samples=100,domain=3.1:4] 
plot(\x,{1/(\x-3)+1}); 
\draw[line width=1.2pt, samples=100,domain=4.1:5] 
plot(\x,{1/(\x-4)+2});
 
\end{tikzpicture}

\caption{To the left the graph of $G_{\Z}$ modelling the non-reflective Gauss
system and to the right $G_{\N}$ modelling the reflective Gauss system.\label{fig:Gauss-system-non-refelctive_and_reflective}}
\end{figure}
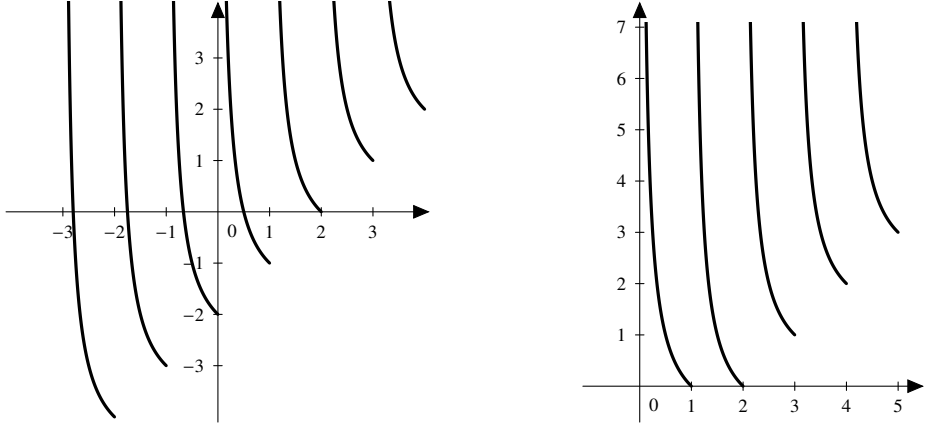
 As a non-linear example with infinitely many branches, we consider
the \emph{Gauss map} $G:(0,1)\to(0,1)$, $x\mapsto1/x-\left\lfloor 1/x\right\rfloor $
and let $J_{k}\coloneqq\left(\frac{1}{k+1},\frac{1}{k}\right)$, $k\geq1$.
The step length function is chosen to be $\psi\left(x\right)\coloneqq k-2$,
for $x\in J_{k},\;k\geq1$. For the reflective Gauss system $G_{\N}$
we choose $\psi_{1}\coloneqq\psi+1$. The graphs of the skew-periodic
interval maps $G_{\Z}$ and $G_{\N}$ are given in \prettyref{fig:Gauss-system-non-refelctive_and_reflective}.

We have $\delta_{0}=1$ and, since $\mu_{\delta_{0}\varphi}$ is the
Gauss measure, we obtain 
\[
\alpha_{\max}=\int\psi\d\mu_{\delta_{0}\varphi}=\frac{\log3}{\log2}-2+\frac{1}{\log2}\sum_{k=3}^{\infty}\left(k-2\right)\log\left(1+\frac{1}{k\left(k+2\right)}\right)=+\infty.
\]
By \prettyref{thm:Big_Theorem}, we obtain that $\delta\left(G_{\N}\right)=\delta\left(G_{\Z}\right)$.
By \prettyref{thm:DimGap_non-reflective}, we have $\delta\left(G_{\Z}\right)<1$.
Accordingly, by \prettyref{prop:Bowen_for-the-skew-periodic-1}, we
get 
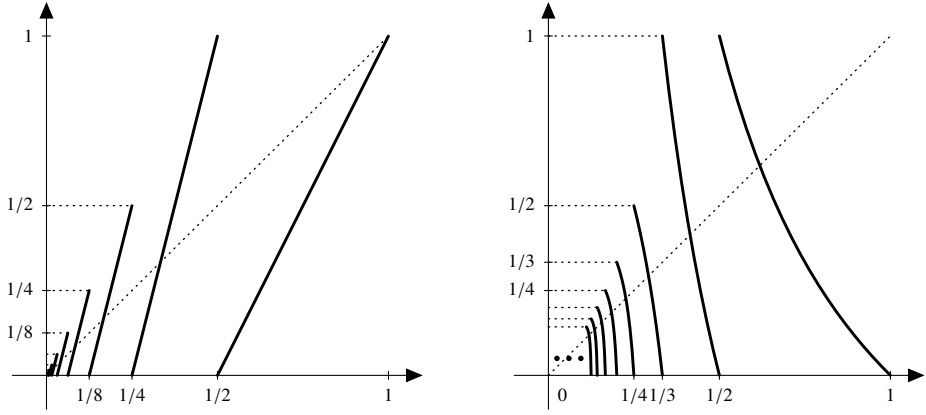
\begin{figure}[h]
\begin{tikzpicture}[scale=.75,line cap=round,line join=round,>=triangle 45,x=6cm,y=6cm] \draw[->,color=black] (-.1,0.) -- (1.1,0.); \foreach \x in {1/8,1/4,1/2,1} \draw[shift={(\x,0)},color=black] (0pt,2pt) -- (0pt,-2pt) node[below] {\scriptsize $\x$}; \draw[->,color=black] (0.,-0.1) -- (0.,1.1); \foreach \y in {1/8,1/4,1/2,1} \draw[shift={(0,\y)},color=black] (2pt,0pt) -- (-2pt,0pt) node[left] {\scriptsize $\y$}; 
\clip(-0.1,-0.1) rectangle (1.2,1.2); 
  
\draw[line width=1.1pt, samples=20,domain=1/2:1] 
plot(\x,{2*\x-1}); 
 
\foreach \k in {1,2,3,4,5,6,7 }
\draw[line width=1.1pt, samples=10,domain=2^(-\k-1):2^(-\k)] 
plot(\x,{4*\x-2^(-\k+1)}); 

\foreach \k in {1,2,3,4,5,6,7 }
\draw[line width=0.5pt,dotted, samples=2,domain=0:2^(-\k-1)] 
plot(\x,{2^(-\k)}); 

\draw[line width=.5pt,dotted , samples=100,domain=0:1] 
plot(\x,{ \x  )});
 

  \end{tikzpicture}~\hspace*{\fill}\begin{tikzpicture}[scale=.75,line cap=round,line join=round,>=triangle 45,x=6cm,y=6cm] \draw[->,color=black] (-.1,0.) -- (1.1,0.); \foreach \x in {1/4,1/3,1/2,1} \draw[shift={(\x,0)},color=black] (0pt,2pt) -- (0pt,-2pt) node[below] {\scriptsize $\x$}; \draw[->,color=black] (0.,-0.1) -- (0.,1.1); \foreach \y in {1/4,1/3,1/2,1} \draw[shift={(0,\y)},color=black] (2pt,0pt) -- (-2pt,0pt) node[left] {\scriptsize $\y$}; \draw[color=black] (0pt,-10pt) node[right] {\scriptsize $0$}; \clip(-0.1,-0.1) rectangle (1.2,1.2); 
  
\draw[line width=1.1pt, samples=20,domain=1/2:1] 
plot(\x,{(1-\x)/\x}); 
 
\foreach \k in {1,2,3,4,5,6,7 }
\draw[line width=1.1pt, samples=10,domain=1/(\k+2):1/(\k+1)] 
plot(\x,{(1-(\k+1)*\x)/((2-\k^2)*\x+\k-1}); 
\foreach \k in {1,2,3,4,5,6,7 }
\draw[line width=0.5pt,dotted, samples=2,domain=0:1/(\k+2)] 
plot(\x,{1/\k}); 

\draw[line width=.5pt,dotted , samples=100,domain=0:1] 
plot(\x,{ \x  )});
 
\foreach \k in {1,2,3} \filldraw (\k*0.035-0.01,0.05) circle (1.1pt);

  \end{tikzpicture}

\caption{\label{fig:Conjugacy_Gauss+Lueroth-system_reflective} On the left
we have the first eight branches of the conjugate system $V_{\lambda}$
to the right reflective $\mathfrak{a}\left(\lambda\right)$-Lüroth
map $L_{\lambda,\N}$ with $\lambda=1/2$. On the right the corresponding
graph for the system conjugate to the Gauss system $G_{\N}$ with
conjugacy map given by $p:(0,1)\to\R_{>0}$, $x\protect\mapsto1/x-1$. }
\end{figure}
\[
\dim_{H}(R_{u}(G_{\N}))=\dim_{H}(R(G_{\N}))=\dim_{H}(R_{u}(G_{\Z}))=\dim_{H}(R(G_{\Z}))=\delta\left(G_{\N}\right)=\delta\left(G_{\Z}\right)<1,
\]
and a dimension gap occurs for both, $G_{\Z}$ and $G_{\N}$. For
the transient sets, \prettyref{thm:HD_Transient} together with the
analyticity of the function $\left(s,t\right)\mapsto\mathcal{P}\left(s\phi+t\psi\right)$
implies 
\[
\dim_{H}(T^{+}(G_{\N}))=\dim_{H}(T^{+}(G_{\Z}))=1>\dim_{H}(T^{-}(G_{\Z}))>\dim_{H}(T^{-}(G_{\N}))=0.
\]
Also, $\mu_{\delta_{0}\varphi}\left(T^{+}\left(G_{\N}\right)\right)=1$
and we exhibit the \textbf{black hole }case. Analogously to the Lüroth
system, we illustrate a conjugate system acting on the unit interval
in \prettyref{fig:Conjugacy_Gauss+Lueroth-system_reflective}.

Concerning the possibility of a phase transition, note that \prettyref{thm:Phasetransition}
does not apply, as the required pressure condition is not fulfilled.
Indeed, the pressure equals $+\infty$ for all $q>0$. Nevertheless,
we expect the covariance criterion to hold also for this example;
however, perturbation theory at the boundary of the domain of holomorphy
of the Perron-Frobenius operator will be necessary. Elementary calculations,
however, allow us to prove the existence of a phase transition in
a situation very similar to that of the Gauss map if we consider a
linearised version of the latter. The following example illustrates
this.

\subsection{The linearised Gauss system}

This example provides a situation where \prettyref{thm:Phasetransition}
does not apply but our criterion holds nevertheless. As a linearised
Gauss system we consider the $\alpha_{G}$-Lüroth map $L_{\alpha_{G}}$
where, for $k\geq1$, the length of the interval $J_{k}$ is given
by 
\[
\frac{1}{k^{2}\zeta\left(2\right)}.
\]
As before, $\delta_{0}=1$ and $\mu_{\delta_{0}\varphi}$ is the Lebesgue
measure. If we choose for the step length function $\psi\left(x\right)\coloneqq k-2$
for $x\in J_{k}$, then we obtain 
\[
\alpha_{\max}=\int\psi\d\mu_{\delta_{0}\varphi}=\frac{1}{\zeta\left(2\right)}\sum_{k=1}^{\infty}\frac{k-2}{k^{2}}=+\infty.
\]
For the reflective $\alpha_{G}$-Lüroth system $L_{\alpha_{G},\N}$
we choose $\psi_{1}\coloneqq\psi+1$. By \prettyref{thm:Big_Theorem},
we obtain that $\delta\left(L_{\alpha_{G},\N}\right)=\delta\left(L_{\alpha_{G},\Z}\right)$.
By \prettyref{thm:DimGap_non-reflective}, we have $\delta\left(L_{\alpha_{G},\Z}\right)<1$.
Therefore, by \prettyref{prop:Bowen_for-the-skew-periodic-1}, we
have 
\begin{align*}
\dim_{H}\left(R_{u}\left(L_{\alpha_{G},\N}\right)\right) & =\dim_{H}\left(R\left(L_{\alpha_{G},\N}\right)\right)=\dim_{H}\left(R_{u}\left(L_{\alpha_{G},\Z}\right)\right)=\dim_{H}\left(R\left(L_{\alpha_{G},\Z}\right)\right)\\
 & =\delta\left(L_{\alpha_{G},\N}\right)=\delta\left(L_{\alpha_{G},\Z}\right)<1,
\end{align*}
and a dimension gap occurs for both, $L_{\alpha_{G},\Z}$ and $L_{\alpha_{G},\N}$.
For the transient sets, \prettyref{thm:HD_Transient} implies 
\[
\dim_{H}\left(T^{+}\left(L_{\alpha_{G},\N}\right)\right)=\dim_{H}\left(T^{+}\left(L_{\alpha_{G},\Z}\right)\right)=1>\dim_{H}\left(T^{-}\left(L_{\alpha_{G},\Z}\right)\right)>\dim_{H}\left(T^{-}\left(L_{\alpha_{G},\N}\right)\right)=0.
\]
Again, $\mu_{\delta_{0}\varphi}\left(T^{+}\left(L_{\alpha_{G},\N}\right)\right)=1$
and we exhibit the \textbf{black hole }case.

Furthermore, for $s>1$ the Gibbs measure $\mu_{s\varphi}$ is given
on the symbolic level by the Bernoulli measure with probability vector
\[
p_{k}\coloneqq\frac{1}{k^{2s}\zeta\left(2s\right)}.
\]
Hence,
\[
\int\psi\d\mu_{s\varphi}=\frac{\zeta\left(2s-1\right)}{\zeta\left(2s\right)}-2.
\]

Since 
\begin{equation}
\cov_{\mu_{s\varphi}}\left(\varphi,\psi\right)=\frac{\partial}{\partial s}\int\psi\d\mu_{s\varphi}=\frac{\partial}{\partial s}\frac{\zeta\left(2s-1\right)}{\zeta\left(2s\right)}=2\frac{\zeta'\left(2s-1\right)\zeta\left(2s\right)-\zeta\left(2s-1\right)\zeta'\left(2s\right)}{\zeta\left(2s\right)^{2}}<0,\label{eq:cov-neg}
\end{equation}
the function $s\mapsto\int\psi\d\mu_{s\varphi}$ is strictly decreasing
on $s>1$ and negative at $s=2$, hence has a unique zero in $s_{0}>1$
(in fact, $s_{0}\in\left(1.23,1.24\right)$). 

By the dominated convergence theorem we have that 
\[
(-\infty,\infty)\times(-\infty,0)\ni(s,q)\mapsto\frac{\partial}{\partial q}\mathcal{P}(s\varphi+q\psi)=\int\psi\d\mu_{s\varphi+q\psi}=\e^{-\mathcal{P}(s\varphi+q\psi)}\sum_{k=1}^{\infty}(k-2)k^{-2s}\e^{q(k-2)}
\]
is continuous. Since for any $s<s_{0}$ we have that $q\mapsto\sum_{k=1}^{\infty}(k-2)k^{-2s}\e^{q(k-2)}$
is strictly monotone increasing with $\sum_{k=1}^{\infty}(k-2)k^{-2s}\e^{q(k-2)}>0$
for $q<0$ close to $0$ and $\sum_{k=1}^{\infty}(k-2)k^{-2s}\e^{q(k-2)}\to-\infty$
for $q\to-\infty$, there exists a unique number $q\left(s\right)<0$
with $\int\psi\d_{\mu_{s\varphi+q(s)\psi}}=0$. Hence, for $s<s_{0}$
we have that $\inf_{q\leq0}\mathcal{P}(s\varphi+q\psi)=\mathcal{P}(s\varphi+q(s)\psi)$.

For $s>s_{0}$ we have 
\[
\int\psi\d\mu_{s\varphi}<0,
\]
and by the variational principle
\[
\inf_{q\leq0}\mathcal{P}(s\varphi+q\psi)=\mathcal{P}(s\varphi).
\]
Combining with our variational formula \prettyref{eq:(3)}, we have
thus shown 
\[
\mathcal{P}(s\varphi,\N)=\inf_{q\leq0}\mathcal{P}(s\varphi+q\psi)=\begin{cases}
\mathcal{P}(s\varphi), & s\geq s_{0},\\
\mathcal{P}(s\varphi+q(s)\psi), & s<s_{0}.
\end{cases}
\]
This function is analytic for $s\neq s_{0}$ by the implicit function
theorem. For the first derivative we have
\[
\frac{\partial}{\partial s}\mathcal{P}(s\varphi,\N)=\begin{cases}
\frac{\partial}{\partial s}\mathcal{P}(s\varphi)=\frac{\sum_{k=1}^{\infty}-2\log(k)k^{-2s}}{\sum_{k=1}^{\infty}k^{-2s}}, & s>s_{0},\\
\frac{\partial}{\partial s}\mathcal{P}(s\varphi+q(s)\psi)=\frac{\sum_{k=1}^{\infty}-2\log(k)k^{-2s}\e^{q\left(s\right)(k-2)}}{\sum_{k=1}^{\infty}k^{-2s}\e^{q\left(s\right)(k-2)}}, & s<s_{0}.
\end{cases}
\]

Next we show $q(s)\rightarrow0$ for $s\nearrow s_{0}$. First observe
that $q=0$ is the unique zero of $q\mapsto\sum_{k=1}^{\infty}(k-2)k^{-2s_{0}}\e^{q(k-2)}$.
Since for $q\ge1.1$ we have 
\[
\sum_{k=1}^{\infty}(k-2)k^{-2s}\e^{q(k-2)}\le-\e^{-q}+\sum_{k=1}^{\infty}k(k+2)^{-2.2},
\]
it follows that $s\mapsto q(s)$ is bounded on $[1.1,s_{0})$. Since
\[
(1,\infty)\times(-\infty,0]\ni(s,q)\mapsto\int\psi\d\mu_{s\varphi+q\psi}=\e^{-\mathcal{P}(s\varphi+q\psi)}\sum_{k=1}^{\infty}(k-2)k^{-2s}\e^{q(k-2)}
\]
is continuous, we conclude that the only accumulation point of $q(s)$,
as $s\nearrow s_{0}$, is $0$. we have thus shown that $q(s)\rightarrow0$
for $s\nearrow s_{0}$. Since $q(s)\rightarrow0$ as $s\nearrow s_{0}$,
by the mean value theorem we find that $s\mapsto\mathcal{P}(s\varphi,\N)$
is also differentiable at $s=s_{0}$ and the derivative is continuous.
The second derivative of $s\mapsto\mathcal{P}(s\varphi,\N)$ exists
on $\R\setminus\left\{ s_{0}\right\} $. Indeed, for $s>s_{0}$, 
\[
\frac{\partial^{2}}{\partial s^{2}}\mathcal{P}(s\varphi,\N)=\frac{\partial^{2}}{\partial s^{2}}\mathcal{P}(s\varphi)=\cov_{s\varphi}\left(\varphi,\varphi\right)=\frac{1}{\zeta\left(2s\right)}\sum_{k=1}^{\infty}(k-2)^{2}k^{-2s}=\cov_{s\varphi}\left(\varphi,\varphi\right),
\]
and for $s<s_{0}$, 
\[
\frac{\partial^{2}}{\partial s^{2}}\mathcal{P}(s\varphi,\N)=\frac{\partial^{2}}{\partial s^{2}}\mathcal{P}(s\varphi+q(s)\psi)=\cov_{s\varphi+q(s)\psi}\left(\varphi,\varphi\right)-\frac{\left(\cov_{s\varphi+q(s)\psi}\left(\varphi,\psi\right)\right)^{2}}{\cov_{s\varphi+q(s)\psi}\left(\psi,\psi\right)}.
\]
Since $s\mapsto\mathcal{P}(s\varphi)$ is analytic in a neighborhood
of $s_{0}$, 
\[
\lim_{t\searrow s_{0}}\frac{\partial^{2}}{\partial s^{2}}\mathcal{P}(s\varphi,\N)_{s=t}=\frac{\partial^{2}}{\partial s^{2}}\mathcal{P}(s\varphi)_{s=s_{0}}=\cov_{s_{0}\varphi}\left(\varphi,\varphi\right).
\]
By the mean value theorem, we have 
\[
\lim_{t\nearrow s_{0}}\frac{\partial^{2}}{\partial s^{2}}\mathcal{P}(s\varphi,\N)_{s=t}=\lim_{s\nearrow s_{0}}\cov_{s\varphi+q(s)\psi}\left(\varphi,\varphi\right)-\frac{\left(\cov_{s\varphi+q(s)\psi}\left(\varphi,\psi\right)\right)^{2}}{\cov_{s\varphi+q(s)\psi}\left(\psi,\psi\right)}.
\]
By the Lebesgue dominated convergence theorem, we obtain as $s\nearrow s_{0}$,
\begin{align*}
 & \cov_{s\varphi+q(s)\psi}\left(\varphi,\varphi\right)\\
 & =\int\varphi^{2}\d\mu_{s\varphi+q(s)\psi}-\left(\int\varphi\d\mu_{s\varphi+q(s)\psi}\right)^{2}\\
 & =\e^{-\mathcal{P}(s\phi+q(s)\psi)}\sum_{k=1}^{\infty}\left(-2\log k\right)^{2}k^{-2s}\e^{q(s)(k-2)}-\left(\e^{-\mathcal{P}\left(s\phi+q(s)\psi\right)}\sum_{k=1}^{\infty}\left(-2\log k\right)k^{-2s}\e^{q(s)(k-2)}\right)^{2}\\
 & \rightarrow\frac{1}{\zeta\left(2s_{0}\right)}\sum_{k=1}^{\infty}\left(-2\log k\right)^{2}k^{-2s_{0}}-\left(\frac{1}{\zeta\left(2s_{0}\right)}\sum_{k=1}^{\infty}\left(-2\log k\right)k^{-2s_{0}}\right)^{2}=\cov_{s_{0}\varphi}\left(\varphi,\varphi\right).
\end{align*}
Similarly, as $s\nearrow s_{0}$ we have 
\begin{align*}
\cov_{s\varphi+q(s)\psi}\left(\varphi,\psi\right) & =\int\varphi\psi\d\mu_{s\varphi+q(s)\psi}=\e^{-\mathcal{P}(s\varphi+q(s)\psi)}\sum_{k=1}^{\infty}\left(-2\log k\right)(k-2)k^{-2s}e^{q(s)(k-2)}\\
 & \rightarrow\frac{1}{\zeta\left(2s_{0}\right)}\sum_{k=1}^{\infty}\left(-2\log k\right)(k-2)k^{-2s_{0}}=\cov_{s_{0}\varphi}\left(\varphi,\psi\right)
\end{align*}
and
\begin{align*}
\cov_{s\varphi+q(s)\psi}\left(\psi,\psi\right) & =\int\psi^{2}\d\mu_{s\varphi+q(s)\psi}=\e^{-\mathcal{P}\left(s\phi+q(s)\psi\right)}\sum_{k=1}^{\infty}(k-2)^{2}k^{-2s}e^{q(s)(k-2)}\\
 & \rightarrow\frac{1}{\zeta\left(2s_{0}\right)}\sum_{k=1}^{\infty}(k-2)^{2}k^{-2s_{0}}=\cov_{s_{0}\varphi}\left(\varphi,\varphi\right).
\end{align*}
Hence, for the second derivative at $s_{0}$ we have a phase transition
due to \eqref{eq:cov-neg}. 

\subsection{Spectral radius of Hessenberg matrices}

Formula \prettyref{eq:(3)} can be used to compute the spectral radius
$\rho_{\mathfrak{H}}$ of the infinite-dimensional \emph{Hessenberg
matrix for a given positive sequence $\left(a_{i}\right)_{i\ge0}\in\ell^{1}$,
$M\ge0$ and with $a_{j}=0$ for $j<0$, }
\[
\mathfrak{H}\coloneqq\left(\begin{array}{cccccc}
a_{0-M} & a_{1-M} & a_{2-M} & a_{3-M} & a_{4-M} & \cdots\\
a_{0} & a_{1} & a_{2} & a_{3} & a_{4} & \cdots\\
0 & a_{0} & a_{1} & a_{2} & a_{3} & \cdots\\
\vdots & 0 & a_{0} & a_{1} & a_{2} & \cdots\\
\vdots & \vdots & 0 & a_{0} & a_{1} & \ddots\\
\vdots & \vdots & \vdots & \ddots & \ddots & \ddots
\end{array}\right).
\]
The \emph{spectral radius of} $\mathfrak{H}$ \cite{MR1743100} is
for each $i\in\N$, and by irreducibility independently of $i$, given
by
\[
\rho_{\mathfrak{H}}=\limsup_{n\rightarrow\infty}\left(\left(\mathfrak{H}^{n}\right)_{i,i}\right)^{1/n}.
\]
 Interpreting the $a_{i}$'s as the inverse of the slopes of full
linear branches of an interval map, $\varphi(x):=\log\left(a_{i}\right)$,
$\psi\left(x\right):=i-1$ as the step length function for $x\in J_{i}$,
and $\psi_{1}:=\psi+M+1$ as the reflexion rule, we find 
\[
\mathcal{P}(\varphi+q\psi)=\log\left(\sum_{k\in\N}a_{k}\e^{q(k-1)}\right)\;\text{ and \;}\log\rho_{\mathfrak{H}}=\mathcal{P}\left(\varphi,\N\right).
\]
Note that $q\mapsto\mathcal{P}(\varphi+q\psi)$ is a convex analytic
function on $(-\infty,0)$ and $\mathcal{P}(\varphi)<\infty$. Hence,
by a direct application of \prettyref{eq:(3)}, we get
\[
\log\rho_{\mathfrak{H}}=\inf_{q\le0}\mathcal{P}(\varphi+q\psi)=\begin{cases}
\log\left({\displaystyle \sum_{k\in\N}a_{k}}\right), & \forall q\leq0:{\displaystyle \sum_{k\in\N}\left(k-1\right)a_{k}\e^{qk}<0},\\
\log\left({\displaystyle \sum_{k\in\N}a_{k}\e^{q_{0}(k-1)}}\right), & \exists!q_{0}<0:{\displaystyle \sum_{k\in\N}\left(k-1\right)a_{k}\e^{q_{0}k}=0.}
\end{cases}
\]

If we are in the special situation $a_{k}\coloneqq\lambda^{k}$, $k\in\N$
with $\lambda\in\left(0,1\right)$, we write $\mathfrak{H}_{\lambda}$
for the corresponding Hessenberg matrix, and the calculation is simplified
to 
\[
\mathcal{P}(\varphi+q\psi)=\log\left(\sum_{k\in\N}\lambda^{k}\e^{q(k-1)}\right).
\]
For $\lambda\geq1/2$, $q\mapsto\mathcal{P}(\varphi+q\psi)$ is monotone
decreasing on $\R_{\leq0}$ and for $\lambda\leq1/2$ has a unique
minimum in $q_{0}=-\log\left(2\lambda\right)$. Therefore, according
to \prettyref{eq:(3)}, for $\rho_{\lambda}\coloneqq\rho_{\mathfrak{H}_{\lambda}}$
we immediately obtain
\[
\log\rho_{\lambda}=\begin{cases}
-\log(1-\lambda), & \lambda\ge1/2,\\
\log(4\lambda), & \lambda\le1/2.
\end{cases}
\]

The spectral radius $\rho_{\lambda}$ has also been computed in \cite[Theorems 1 and 2]{MR2959300}
by two different methods: the first is by solving recursive formul\ae~for
the eigenvectors of $\mathfrak{H}_{\lambda}$ giving rise to conformal
measures, the second is by computation of the maximal eigenvalue of
the $k\times k$ sub-matrices of $\mathfrak{H}_{\lambda}$ and letting
$k$ tend to infinity. In contrast to this, our approach is based
on drift arguments for $\Z$-extensions and a careful analysis of
the effect of the reflective boundary (see \prettyref{thm:Onesided_fibre-pressure_vs_base-pressure}).

\section{Thermodynamic formalism for $\Psi$-extensions \label{subsec:Thermodynamic-formalism}}

\subsection{\label{subsec:symbolic_ifs}Symbolic dynamics for expanding Markov
maps}

Let us return to the general setting as introduced in \prettyref{subsec:Expanding-interval-maps}.
Let $f:\Delta\to\R$ be an expanding $C^{2}$-Markov interval map
with $\Delta=\bigcup_{i\in I}J_{i}$. 

For each $i\in I$ there exists $k(i)\in\Z$ and $\ell(i)\in\Z$ such
that $f_{i}(\overline{J_{i}})=[k(i),k(i)+1]$ and $\overline{J_{i}}\subset[\ell(i),\ell(i)+1]$.
The inverse branches $f_{i}{}^{-1}:[k(i),k(i)+1]\rightarrow\overline{J_{i}}$,
$i\in I$, give rise to a graph directed Markov system $(f_{i}^{-1})_{i\in I}$
in the sense of \cite[Section 4.2]{MR2003772} with possibly infinitely
many vertices. Note that the number of vertices is required to be
finite in \cite{MR2003772}.

Define the Markov shift 
\[
\Sigma:=\left\{ \omega=(\omega_{0},\omega_{1},\dots)\in I^{\N}:J_{\omega_{k+1}}\subset f\left(J_{\omega_{k}}\right),k\ge0\right\} \subset I{}^{\N}.
\]
For $\omega=(\omega_{0},\omega_{1},\dots)\in I^{\N}$ and $n\in\N$
we define $\omega|_{n}:=(\omega_{0},\dots,\omega_{n-1})$. Define
the set of admissible $n$-words $\Sigma^{n}:=\left\{ \omega|_{n}:\omega\in\Sigma\right\} $.
For $\omega=\left(\omega_{0},\ldots,\omega_{n-1}\right)\in\Sigma^{n}$,
and $n\in\N$ let $k\in\Z$ with $f\left(J_{\omega_{n-1}}\right)=\left(k,k+1\right)$.
Then $J_{\omega}\coloneqq f_{\omega_{0}}^{-1}\circ\dots\circ f_{\omega_{n-1}}^{-1}(k,k+1)$
defines a non-empty subinterval of $J_{\omega_{0}}$. The \emph{coding
map} $\pi=\pi_{f}:\Sigma\rightarrow\RepMU(f)$ of $f$ is given by
\[
\bigcap_{n\in\N}\overline{J}_{\omega|_{n}}=\left\{ \pi(\omega)\right\} .
\]

We have $\pi(\Sigma)=\RepMU(f)$ and $\pi^{-1}(x)$ is a singleton
for $x\in\Rep(f)\subset\RepMU\left(f\right)$. Since $\RepMU\left(f\right)\setminus\Rep\left(f\right)$
is countable, it follows that $\pi$ defines up to a countable set
a topological conjugacy between $f:\Rep(f)\rightarrow\Rep(f)$ and
$\sigma:\Sigma\rightarrow\Sigma$. Since $f$ is topological transitive,
$\sigma$ is topological transitive Markov shift.

Let $g:\Rep(f)\rightarrow\R$ be H\"older continuous. It then follows
that $g\circ\pi:\pi^{-1}\left(\Rep\left(f\right)\right)\rightarrow\R$
is H\"older continuous with respect to the shift metric, that is,
there exist two constants $L\ge1$ and $\alpha>0$ such that for all
$\omega,\tau\in\pi^{-1}\left(\Rep(f)\right)$, 
\[
\left|g(\pi\omega)-g(\pi\tau)\right|\le L\e^{-\alpha\left|\omega\wedge\tau\right|}
\]
where $\left|\omega\wedge\tau\right|$ denotes the length of the longest
common initial block of $\omega$ and $\tau$ \cite[Section 4.2]{MR2003772}.
It is well known that $g\circ\pi$ has a unique H\"older continuous
extension to $\Sigma$ with the same $\alpha$ and $L$, which we
will also denote by $g\circ\pi:\Sigma\rightarrow\R$.  Note that
this extension is for $\omega\in\Sigma$ with $\omega_{0}=i$ given
by $g\circ\pi(\omega)=g_{i}(\pi(\omega))$. For $\omega\in\Sigma^{n}$
we define the cylinder set $\left[\omega\right]:=\left\{ x\in\Sigma:x|_{n}=\omega\right\} $.

The following\emph{ bounded distortion property} holds:
\begin{align*}
D_{g} & :=\sup_{n\in\N}\sup_{\omega\in\Sigma^{n}}\sup_{x,y\in J_{\omega}}S_{n}g(x)-S_{n}g(y)\\
 & =\sup_{n\in\N}\sup_{\omega\in\Sigma^{n}}\sup_{x,y\in[\omega]}S_{n}^{\sigma}\left(g\circ\pi\right)(x)-S_{n}^{\sigma}\left(g\circ\pi\right)(y)<+\infty.
\end{align*}
The geometric potential is given by $\varphi:=-\log\left|f'\right|:\Delta\rightarrow\R_{<0}$.
By our assumption these functions are Lipschitz and by the Rényi condition
the Lipschitz constant can be chosen uniformly in $i\in I$.

Let $\Per_{n}\left(\sigma\right)\coloneqq\left\{ x\in\Sigma:\sigma^{n}x=x\right\} $
denote the set of $n$-periodic points of $\sigma$. For $g\circ\pi:\Sigma\rightarrow\R$
H\"older continuous and for each $k\in\N$ and $\nu\in\Sigma^{k}$,
the Gurevich pressure in the symbolic setting is given by (cf\@.
\cite{MR1738951})
\begin{equation}
\mathcal{P}\left(g\circ\pi,\sigma\right):=\limsup_{n\rightarrow\infty}\frac{1}{n}\log\sum_{\omega\in\Per_{n}\left(\sigma\right)\cap\left[\nu\right]}\exp S_{n}^{\sigma}\left(g\circ\pi\right)(\omega).\label{eq:GurevichPressureSymbolic}
\end{equation}
Since $\Sigma$ is irreducible, this definition is independent of
$i\in I$.

The following lemma guarantees that we can switch between the symbolic
and the geometric setting in the definition of pressure.
\begin{lem}
\label{lem:L_impies_in_Rep(f)} For $\nu\in\Sigma^{k}$, $k\geq1$,
and $L=J_{\nu}$ a basic interval of order $k$, assume $\overline{L}\cap\Z=\emptyset$.
Then $\pi:\Per_{n}\left(f\right)\cap\left[\nu\right]\to\Per_{n}\left(f\right)\cap L$
is a bijection for all $n\geq1$.
\end{lem}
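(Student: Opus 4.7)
The plan is to verify well-definedness (interpreting the statement with $\Per_n(\sigma) \cap [\nu]$ as the domain, since cylinders live in $\Sigma$), after which injectivity and surjectivity fall out of the fact that $\pi$ is a bijection on $\pi^{-1}(\Rep(f))$. The substantive content is showing that for $\omega \in \Per_n(\sigma) \cap [\nu]$ one has $\pi(\omega) \in L \cap \Rep(f)$; the coding intertwining $f^j \circ \pi = \pi \circ \sigma^j$, valid on $\pi^{-1}(\Rep(f))$, then yields $f^n(\pi(\omega)) = \pi(\sigma^n\omega) = \pi(\omega)$.

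The key mechanism is a boundary-propagation argument. By construction $\pi(\omega) \in \overline{J_\nu} = \overline{L}$. Suppose toward a contradiction that $\pi(\sigma^j\omega) \in \overline{\Delta}\setminus\Delta$ for some $j \geq 0$, that is, $\pi(\sigma^j\omega)$ is a boundary point of $\overline{J_{\omega_j}}$. The monotone $C^2$-extension $f_{\omega_j}:\overline{J_{\omega_j}} \to [k(\omega_j), k(\omega_j)+1]$ sends endpoints to endpoints, so $\pi(\sigma^{j+1}\omega) = f_{\omega_j}(\pi(\sigma^j\omega)) \in \Z$. By the grid structure $\overline{J_{\omega_{j+1}}} \subset [\ell(\omega_{j+1}), \ell(\omega_{j+1})+1]$, any integer in $\overline{J_{\omega_{j+1}}}$ must coincide with one of the grid endpoints and is therefore a boundary point of $\overline{J_{\omega_{j+1}}}$. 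Inductively, $\pi(\sigma^i\omega) \in \Z$ for every $i \geq j+1$. Choosing $i$ a multiple of $n$ with $i \geq j+1$ and invoking $\sigma^n\omega = \omega$ yields $\pi(\omega) \in \Z \cap \overline{L}$, contradicting $\overline{L} \cap \Z = \emptyset$. Hence $\pi(\sigma^j\omega) \in \Delta$ for all $j \geq 0$, so $\pi(\omega) \in \Rep(f)$. The analogous argument shows $\pi(\omega) \in L$: the endpoints of $\overline{J_\nu}$ are preimages of $\{m, m+1\}$ under $f_{\nu_{k-1}} \circ \cdots \circ f_{\nu_0}$, so a hypothetical $\pi(\omega) \in \partial J_\nu$ would force $\pi(\sigma^k\omega) \in \Z$, and the same propagation-plus-periodicity reasoning contradicts the hypothesis.

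With well-definedness in hand, injectivity on $\Per_n(\sigma) \cap [\nu]$ is automatic from injectivity of $\pi$ on $\pi^{-1}(\Rep(f))$. For surjectivity, any $x \in \Per_n(f) \cap L$ lies in $\Rep(f)$ and hence admits a unique itinerary $\omega \in \Sigma$ with $\pi(\omega) = x$; the condition $x \in J_\nu$ forces $\omega \in [\nu]$, while $f^n(x) = x$ combined with injectivity of $\pi$ on $\Rep(f)$ forces $\sigma^n\omega = \omega$. The principal obstacle is the propagation step, where one must verify that the $C^2$-extension $f_i$ maps each endpoint of $\overline{J_i}$ to a point of $\{k(i), k(i)+1\}$; this is ensured by strict monotonicity of the branch together with the grid condition $f(J_i) = (k(i), k(i)+1)$.
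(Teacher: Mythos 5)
Your proof is correct, and it reaches the conclusion by a genuinely different route from the paper. The paper's argument is a one-step, non-inductive observation: writing $x=\pi(\omega)$ with $\sigma^n\omega=\omega$, the composite $F:=f_{\omega_{n-1}}\circ\cdots\circ f_{\omega_0}$ is a homeomorphism from $\overline{J_{\omega|_n}}$ onto $[\ell,\ell+1]$ (where $\overline{L}\subset(\ell,\ell+1)$), so the boundary of $\overline{J_{\omega|_n}}$ is sent onto $\{\ell,\ell+1\}$; since the fixed point $x$ of $F$ lies in $\overline{L}\subset(\ell,\ell+1)$, it must be interior to $J_{\omega|_n}$, which simultaneously yields $x\in\Rep(f)$, $x\in L$, and $f^n(x)=x$. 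Your argument instead runs a boundary-propagation induction: if any iterate $\pi(\sigma^j\omega)$ hits a boundary point, then all subsequent iterates land on integers, and cycling back via $\sigma^n\omega=\omega$ contradicts $\overline{L}\cap\Z=\emptyset$. Both are sound; yours makes the role of the hypothesis $\overline{L}\cap\Z=\emptyset$ and of periodicity more explicit, at the cost of being longer and requiring the remark (which you verify) that an integer of $\overline{J_i}$ must be an endpoint. One small redundancy: your separate ``analogous argument'' that $\pi(\omega)\in L$ is not needed, because once you have $\pi(\sigma^j\omega)\in J_{\omega_j}$ for all $j\geq 0$, it already follows that $\pi(\omega)\in J_{\omega|_k}=L$ directly from the definition of the basic interval. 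The surjectivity half is identical to the paper's.
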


\begin{proof}
Let $\omega\in\Per_{n}\left(\sigma\right)\cap\left[\nu\right]$, i.\,e\@.
$\sigma^{n}\omega=\omega$ and $\omega|_{k}=\nu$. Hence, $x=\pi\left(\omega\right)\in\overline{\Rep\left(f\right)\cap L}$
and $f_{\omega_{n-1}}\circ\dots\circ f_{\omega_{0}}(x)=x$. Let $\ell\in\N$
such that $\overline{L}\subset(\ell,\ell+1)$. Since $f_{\omega_{n-1}}\circ\dots\circ f_{\omega_{0}}:\overline{J_{\omega|_{n}}}\rightarrow\left[\ell,\ell+1\right]$
is a homeomorphism and $x\in\overline{L}\subset(\ell,\ell+1)$, we
have $x\in J_{\omega|_{n}}\cap L$. Hence, $x\in\Per_{n}\left(f\right)\cap L$.
Since $J_{\omega|_{n}}$ are pairwise disjoint for $\omega\in\Per_{n}\left(\sigma\right)\cap\left[\nu\right]$,
the map $\pi$ is one-to-one on $\Per_{n}\left(\sigma\right)\cap\left[\nu\right]$.
Conversely, let $x\in\Per_{n}\left(f\right)\cap L$. Then for $\omega\coloneqq\pi^{-1}\left(x\right)\in\Sigma$
we have $\omega|_{k}=\nu$ and $\sigma^{n}\omega=\omega$.
\end{proof}
\begin{prop}
\label{prop:gurevich_wrt_f} Let $g:\Rep(f)\rightarrow\R$ be H\"older
continuous. Then, for any admissible word $\nu\in\Sigma^{n}$,
\[
\mathcal{P}\left(g\circ\pi,f\right)=\limsup_{n}\frac{1}{n}\log\sum_{x\in\Per_{n}\left(f\right)\cap J_{\nu}}\exp S_{n}g\left(x\right).
\]
\end{prop}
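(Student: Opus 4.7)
The plan is to pass through the symbolic Gurevich pressure \eqref{eq:GurevichPressureSymbolic}, which by construction does not depend on the choice of base cylinder, and to use the preceding Lemma to connect the symbolic sums with the geometric ones. Two ingredients are available for free: (i) if $\omega\in\Per_n(\sigma)$ then $\pi(\omega)\in\Rep(f)$ (periodic orbits are recurrent) and the semi-conjugacy $\pi\circ\sigma=f\circ\pi$ on $\pi^{-1}(\Rep(f))$ yields $S_n^\sigma(g\circ\pi)(\omega)=S_ng(\pi(\omega))$; (ii) $\pi$ is injective on $\pi^{-1}(\Rep(f))$, so the assignment $x\mapsto\pi^{-1}(x)$ embeds $\Per_n(f)\cap J_\nu$ into $\Per_n(\sigma)\cap[\nu]$.

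For the \emph{upper bound}, combining (i) and (ii) gives
\[
\sum_{x\in\Per_n(f)\cap J_\nu}\exp S_ng(x)\;\leq\;\sum_{\omega\in\Per_n(\sigma)\cap[\nu]}\exp S_n^\sigma(g\circ\pi)(\omega),
\]
and applying $\limsup_n\frac{1}{n}\log$ bounds the right-hand side of the proposition by $\mathcal{P}(g\circ\pi,\sigma)$.

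For the \emph{lower bound}, I would use that $\mathcal{P}(g\circ\pi,\sigma)$ may be computed from sums over $\Per_n(\sigma)\cap[\nu']$ for any extension $\nu'\in\Sigma^{k'}$ of $\nu$ (meaning $\nu'|_k=\nu$). The idea is to choose $\nu'$ so that the preceding Lemma becomes applicable, i.e.\ so that $\overline{J_{\nu'}}\cap\Z=\emptyset$. Uniform expansion combined with the Rényi/bounded-distortion condition forces $\diam(J_{\nu'})\to0$ as $k'\to\infty$, and since $\bigcup_{m\geq0}f^{-m}(\Z)$ is countable and hence nowhere dense in $J_\nu\cap\Rep(f)$, picking any $x\in\Rep(f)\cap J_\nu$ whose forward orbit avoids $\Z$ and letting $\nu'$ be a sufficiently long initial segment of its symbolic coding produces such a sub-cylinder. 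The Lemma then promotes the embedding from (ii) to a bijection $\Per_n(\sigma)\cap[\nu']\to\Per_n(f)\cap J_{\nu'}$, and together with (i) the symbolic and geometric sums coincide, giving
\[
\mathcal{P}(g\circ\pi,\sigma)=\limsup_n\frac{1}{n}\log\sum_{x\in\Per_n(f)\cap J_{\nu'}}\exp S_ng(x)\;\leq\;\limsup_n\frac{1}{n}\log\sum_{x\in\Per_n(f)\cap J_\nu}\exp S_ng(x),
\]
using $J_{\nu'}\subset J_\nu$. Combining the two bounds with the identification $\mathcal{P}(g\circ\pi,f)=\mathcal{P}(g\circ\pi,\sigma)$ finishes the argument.

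The main obstacle I expect is precisely the boundary bookkeeping in choosing $\nu'$: for a generic admissible $\nu$ the cylinder closure $\overline{J_\nu}$ may well touch $\Z$ at an endpoint, so the Lemma cannot be applied directly to $\nu$ and one must refine. Verifying that such a refinement exists (and that the refinement does not affect the resulting limit) is the one step that requires more than mechanical semi-conjugacy and injectivity arguments; everything else is a direct consequence of the fact that the symbolic Gurevich pressure is cylinder-independent and that Birkhoff sums lift along $\pi$ on periodic orbits.
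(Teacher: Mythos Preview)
Your proposal is correct and follows essentially the same route as the paper: the upper bound comes from embedding $\Per_n(f)\cap J_\nu$ into $\Per_n(\sigma)\cap[\nu]$ via $\pi^{-1}$, and the lower bound comes from refining to a sub-cylinder $\nu'$ with $\overline{J_{\nu'}}\cap\Z=\emptyset$ so that \prettyref{lem:L_impies_in_Rep(f)} yields a bijection, then invoking cylinder-independence of the symbolic Gurevich pressure. The paper's proof is terser about the existence of such a $\nu'$ (it simply asserts one can take ``an appropriate subword''), whereas you spell out the mechanism via shrinking diameters and countability of $\bigcup_m f^{-m}(\Z)$; but the structure of the argument is identical.
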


\begin{proof}
Since $f$ is conjugate to $\sigma|_{\pi^{-1}\Rep(f)}$, we infer
the upper bound 
\[
\limsup_{n}\frac{1}{n}\log\sum_{x\in\Per_{n}\left(f\right)\cap J_{\nu}}\exp S_{n}^{f}g\left(x\right)\leq\limsup_{n\rightarrow\infty}\frac{1}{n}\log\sum_{\omega\in\Per_{n}\left(\sigma\right)\cap\left[\nu\right]}\exp S_{n}^{\sigma}\left(g\circ\pi\right)(\omega)
\]
and if $J_{\nu}\cap\Z=\emptyset$ then by \prettyref{lem:L_impies_in_Rep(f)}
equality holds. If otherwise $J_{\nu}\cap\Z\neq\emptyset$, then we
consider an appropriate subword $\nu'$ such that $J_{\nu'}\cap\Z=\emptyset$
and derive a lower bound which coincides with $\mathcal{P}(g\circ\pi,\sigma)$
since its definition is independent from the chosen start cylinder.
\end{proof}
In the following we consider the special situation of a Markov $\FI_{0}:\Delta_{0}\to\left(0,1\right)$
without any extension, i.\,e\@. $H=0$ and show that the Gurevich
pressure and the classical topological pressure with respect to an
infinite alphabet as defined e.\,g\@. in \cite{MR2003772} coincide.
\begin{cor}
\label{cor:GurevichMU-Pressure} Let $\FI_{0}:\Delta_{0}\to\left(0,1\right)$
be an expanding $C^{2}$-Markov interval map. Then
\[
\mathcal{P}\left(g\right)=\lim_{n}\frac{1}{n}\log\sum_{\omega\in\Sigma^{n}}\e^{S_{\omega}g}
\]
with $S_{\omega}g\coloneqq\sup_{x\in\left[\omega\right]}S_{n}^{\sigma}g\circ\pi\left(x\right)$.
\end{cor}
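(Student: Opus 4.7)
Plan: Fix $i \in I$ and abbreviate $Z_n(g) := \sum_{\omega \in \Sigma^n} e^{S_\omega g}$ and $A_n^i := \sum_{\omega \in \Sigma^n,\, \omega_0 = i} e^{S_\omega g}$. The crucial input is that $\FI_0$ has only full branches (since $\FI_0(J_i) \subset (0,1)$ combined with the grid structure forces $\FI_0(J_i) = (0,1)$), so $\Sigma = I^{\mathbb{N}}$ is the full shift and $\Sigma^{n+m} = \Sigma^n \times \Sigma^m$. For $\omega = \omega'\omega'' \in \Sigma^{n+m}$ with $|\omega'| = n$, the identity $S_{n+m}^\sigma(g\circ\pi)(x) = S_n^\sigma(g\circ\pi)(x) + S_m^\sigma(g\circ\pi)(\sigma^n x)$ together with subadditivity of $\sup$ gives $S_\omega g \le S_{\omega'} g + S_{\omega''} g$; summing yields $Z_{n+m}(g) \le Z_n(g) Z_m(g)$, so by Fekete's lemma $\lim_n \frac{1}{n} \log Z_n(g)$ exists in $[-\infty, +\infty]$.

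Next I compare $A_n^i$ with $Z_{n-1}(g)$. Writing $\omega = i\omega''$ and using $S_n^\sigma(g\circ\pi)(x) = g(\pi x) + S_{n-1}^\sigma(g\circ\pi)(\sigma x)$, the upper bound $S_\omega g \le G_i + S_{\omega''} g$ is immediate, where $G_i := \sup_{\overline{J_i}} g_i \in \mathbb{R}$. For the lower bound, given $\epsilon > 0$ I pick $z \in [\omega'']$ with $S_{n-1}^\sigma(g\circ\pi)(z) > S_{\omega''} g - \epsilon$ and set $y := iz \in [i\omega'']$; the uniform Hölder constant $L$ of $g$ (combined with $\diam J_i \le 1$) gives $g(\pi y) \ge G_i - L$, hence $S_\omega g \ge G_i - L + S_{\omega''} g$ after $\epsilon \to 0$. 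Summing over $\omega'' \in \Sigma^{n-1}$ yields
\[
e^{-L} e^{G_i} Z_{n-1}(g) \le A_n^i \le e^{G_i} Z_{n-1}(g),
\]
so $\frac{1}{n} \log A_n^i$ and $\frac{1}{n} \log Z_n(g)$ share the same limit.

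Finally, by \prettyref{lem:L_impies_in_Rep(f)} (applicable since $\overline{J_i} \cap \mathbb{Z} = \emptyset$ from $\Delta_0 \subset (0,1)$), for each $\omega \in \Sigma^n$ with $\omega_0 = i$ the cylinder $[\omega]$ contains a unique fixed point of $\sigma^n$ corresponding to a unique $x_\omega \in \Per_n(\FI_0) \cap J_\omega$. Bounded distortion gives $|S_n g(x_\omega) - S_\omega g| \le D_g$, so the periodic sum is comparable to $A_n^i$ up to $e^{\pm D_g}$; chaining with \prettyref{prop:gurevich_wrt_f} and the previous two steps yields $\mathcal{P}(g) = \lim_n \frac{1}{n} \log Z_n(g)$, the $\limsup$ defining $\mathcal{P}(g)$ thereby being attained as a genuine limit.

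The main obstacle is the second step: with infinitely many branches, the oscillation of $g$ on each $J_i$ must be controlled uniformly in $i$ so that the prefactor $e^{G_i}$ cleanly captures the dependence on the first symbol. This is precisely where the assumption that the Hölder constant of $g|_{J_i}$ is independent of $i$ enters; without it the finite ratio between $A_n^i$ and $e^{G_i} Z_{n-1}(g)$ would be lost, and the simple expression on the right-hand side of the corollary could diverge from $\mathcal{P}(g)$.
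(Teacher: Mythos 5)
Your argument is correct in spirit but takes a genuinely more self-contained route than the paper's. The paper simply invokes the standard fact that for a full shift the Gurevich pressure of $g\circ\pi$ coincides with the classical topological pressure $\lim_n\frac1n\log\sum_{\omega\in\Sigma^n}\mathrm{e}^{S_\omega g}$ (Sarig; Mauldin--Urba\'nski), and then applies Proposition~\ref{prop:gurevich_wrt_f}. You instead reprove that equivalence directly: the full-shift decomposition $\Sigma^{n+m}=\Sigma^n\times\Sigma^m$ gives submultiplicativity $Z_{n+m}\le Z_nZ_m$ and hence, by Fekete, existence of $\lim_n\frac1n\log Z_n$; the two-sided bound $\mathrm{e}^{G_i-L}Z_{n-1}\le A_n^i\le \mathrm{e}^{G_i}Z_{n-1}$, with $L$ the $i$-independent H\"older constant, ties the first-symbol-restricted sum to $Z_n$; and bounded distortion transfers from cylinder suprema to periodic points. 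This is more elementary and in passing shows that the $\limsup$ defining $\mathcal{P}(g)$ is in fact a limit, which the paper's route also delivers but less explicitly.

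There is one inaccurate step. You assert that $\overline{J_i}\cap\Z=\emptyset$ follows from $\Delta_0\subset(0,1)$, but this is false: $\Delta_0\subset(0,1)$ only gives $J_i\cap\Z=\emptyset$, while the closure $\overline{J_i}\subset[0,1]$ may well contain $0$ or $1$. Indeed, for the $\mathfrak{a}(\lambda)$-L\"uroth map of Section~\ref{subsec:vanstrien} one has $\overline{J_1}=[\lambda,1]\ni1\in\Z$, and the reflective simple random walk example has $\overline{J_1}=[0,c_1]\ni0$. Consequently Lemma~\ref{lem:L_impies_in_Rep(f)} is not directly applicable with $\nu=(i)$, and the asserted bijection between $\{\omega\in\Sigma^n:\omega_0=i\}$ and $\Per_n(\FI_0)\cap J_i$ may miss a boundary periodic orbit. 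The repair is exactly what the proof of Proposition~\ref{prop:gurevich_wrt_f} already does: replace the one-cylinder $(i)$ by a longer admissible word $\nu$ with $\overline{J_\nu}\subset(0,1)$ (for $\card I\ge2$ a suitable level-$2$ word always exists, e.g.\ $(i,j)$ with $\overline{J_j}\subset(0,1)$ or, if necessary, by choosing $j$ so that $f_i^{-1}$ pulls $\overline{J_j}$ away from $\partial\overline{J_i}$), run your comparison of $A_n^\nu$ against $Z_{n-|\nu|}$, and conclude as before. With that modification your proof is sound.
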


\begin{proof}
Since the corresponding shift $\Sigma$ is full it is well know that
the Gurevich pressure and the classical topological pressure of $g\circ\pi$
with respect to the shift dynamical system coincide. The equality
then follows from \prettyref{prop:gurevich_wrt_f}.
\end{proof}
\begin{rem}
Finally, we remark that for any $\Psi$-extension $\FI_{\H}:\Delta_{\H}\to\left(0,1\right)$
with $0\ni\H\subset\Z$ and Hölder continuous potential $g:\Delta_{0}\to\R$
and $g_{\H}$ its periodic continuous extension, we have 
\begin{equation}
\sum_{k=0}^{n-1}g\circ\FI_{0}^{k}\left(x\right)=\sum_{k=0}^{n-1}g_{\H}\circ\FI_{\H}^{k}\left(x\right).\label{eq:Skew_ergodic_sum}
\end{equation}
\end{rem}

For $K\subset I$ we write $f_{K}$ for the restriction of $f$ to
$\Delta_{K}\coloneqq\biguplus_{i\in K}J_{i}$, that is $f_{K}:\Delta_{K}\rightarrow\R$
defines an expanding $C^{1+\epsilon}$-Markov interval map and the
restricted potential function is given by $g_{K}\coloneqq g|_{\Rep\left(f_{K}\right)}$.

\begin{lem}
\label{lem:exhaustio_principle}The following exhaustion principle
holds
\[
\mathcal{P}(g,f)=\sup_{K\subset I,\,\#K<\infty}\mathcal{P}(g_{K},f_{K}).
\]
\end{lem}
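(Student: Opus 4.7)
The plan is to establish the two inequalities separately. The inequality $\sup_{K}\mathcal{P}(g_{K},f_{K})\le\mathcal{P}(g,f)$ is the easy direction: for any finite $K\subset I$ containing a fixed reference letter $i$, every $x\in\Per_n(f_K)\cap J_i$ also lies in $\Per_n(f)\cap J_i$, and since $g_K$ is the restriction of $g$, the Birkhoff sums agree on such orbits. Consequently the partition sum defining $\mathcal{P}(g_K,f_K)$ is a sub-sum of the one defining $\mathcal{P}(g,f)$, and the claim follows by taking $\limsup_{n}\frac{1}{n}\log(\cdot)$ and then the supremum over $K$.

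For the reverse inequality, I pass to the symbolic setting via \prettyref{prop:gurevich_wrt_f}. Fix an admissible word $\nu\in\Sigma^{k}$ with $\overline{J_{\nu}}\cap\Z=\emptyset$ (always possible after refining), and for finite $K\subseteq I$ containing the letters of $\nu$, set
\[
Z_{n}^{K}(\nu):=\sum_{\omega\in\Per_{n}(\sigma_{K})\cap[\nu]}\exp S_{n}^{\sigma}(g\circ\pi)(\omega),\qquad Z_{n}(\nu):=Z_{n}^{I}(\nu).
\]
Since every $n$-periodic sequence uses only finitely many letters, one has $Z_{n}(\nu)=\sup_{K}Z_{n}^{K}(\nu)$ with $K$ ranging over finite alphabets that contain the letters of $\nu$.

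The key step is a near-submultiplicativity of $n\mapsto Z_{n}^{K}(\nu)$ valid for each fixed finite $K$. Given $\omega,\omega'\in\Per_{n}(\sigma_{K})\cap[\nu]$, the concatenation $\omega\omega'$ is admissible in $\Sigma_{K}$ (because the closing transition $\omega_{n-1}\to\omega_{0}=\nu_{0}$ of $\omega$ coincides with $\omega_{n-1}\to\omega'_{0}=\nu_{0}$) and belongs to $\Per_{2n}(\sigma_{K})\cap[\nu]$. Combined with the bounded distortion estimate for $g\circ\pi$, this yields $Z_{2n}^{K}(\nu)\ge\e^{-2D_{g}}Z_{n}^{K}(\nu)^{2}$. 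Iterating and invoking \prettyref{prop:gurevich_wrt_f} applied to $f_{K}$ produces, for every $n\ge k$,
\[
\mathcal{P}(g_{K},f_{K})\ge\frac{1}{n}\log Z_{n}^{K}(\nu)-\frac{2D_{g}}{n}.
\]
Given $\varepsilon>0$, I pick $n$ large with $Z_{n}(\nu)\ge\e^{n(\mathcal{P}(g,f)-\varepsilon)}$ and then a finite $K_{n}$ with $Z_{n}^{K_{n}}(\nu)\ge\tfrac{1}{2}Z_{n}(\nu)$; this gives $\sup_{K}\mathcal{P}(g_{K},f_{K})\ge\mathcal{P}(g,f)-\varepsilon-O(1/n)$, and sending $n\to\infty$ along an appropriate subsequence and then $\varepsilon\to0$ completes the argument.

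The main obstacle is the reverse inequality: a priori the passage from a single large value $Z_{n}^{K_{n}}(\nu)$ to the $\limsup$-defined pressure $\mathcal{P}(g_{K_{n}},f_{K_{n}})$ could be lossy. The concatenation trick, which depends crucially on the shared return to $\nu_{0}$ of every periodic orbit in $[\nu]$ together with the bounded distortion of $g$, resolves this in a self-contained manner and avoids appealing to the general exhaustion theorem for Gurevich pressure on topologically transitive countable Markov shifts.
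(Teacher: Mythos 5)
Your proof is correct, and it is genuinely more self-contained than the paper's. The paper disposes of this lemma in a single line: combine Proposition~\ref{prop:gurevich_wrt_f} with Sarig's compact approximation property for the Gurevich pressure of countable Markov shifts, i.e.\ the theorem that on a topologically transitive countable Markov shift the Gurevich pressure is the supremum of the pressures over all restrictions to finite sub-alphabets. What you have done is essentially reprove that compact approximation property in this concrete setting. Your concatenation/supermultiplicativity estimate $Z_{2n}^{K}(\nu)\ge\e^{-2D_{g}}\bigl(Z_{n}^{K}(\nu)\bigr)^{2}$, obtained by gluing two $n$-periodic orbits in $[\nu]$ at the shared base symbol $\nu_{0}$ and invoking bounded distortion, is exactly the mechanism that underlies Sarig's approximation theorem, and it correctly converts a single large finite-alphabet partition sum into a lower bound on $\mathcal{P}(g_{K},f_{K})$ robust against the $\limsup$. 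What the paper's route buys is brevity (and automatic coverage of bookkeeping issues, such as the case $\mathcal{P}(g,f)=+\infty$ and the independence of the base cylinder, which Sarig has already dealt with); what your route buys is a self-contained argument that does not import the external theorem.

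Two small points worth making explicit if you were to write this up. First, when $Z_{n}(\nu)=+\infty$ the step ``pick $K_{n}$ with $Z_{n}^{K_{n}}(\nu)\ge\tfrac12 Z_{n}(\nu)$'' should be replaced by ``pick $K_{n}$ with $Z_{n}^{K_{n}}(\nu)$ arbitrarily large''; the conclusion $\sup_{K}\mathcal{P}(g_{K},f_{K})=+\infty$ then follows, but the case deserves a sentence. Second, for a general finite $K$ the restricted system $f_{K}$ need not be topologically transitive, so the Gurevich pressure $\mathcal{P}(g_{K},f_{K})$ as defined through a single reference letter can a priori depend on that letter; your argument sidesteps this by computing everything through the fixed cylinder $[\nu]$, which is the right thing to do, but it is worth flagging that this is the interpretation being used (the paper's one-line proof faces the same subtlety implicitly).
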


\begin{proof}
Combine \prettyref{prop:gurevich_wrt_f} with Sarig's compact approximation
property.
\end{proof}

\subsection{\label{subsec:Gibbs-measures-and}Gibbs measures}

Let $\FI_{0}:\Delta_{0}\to\left(0,1\right)$ be an expanding $C^{2}$-Markov
interval map and $g:\Rep_{0}\rightarrow\R$ be H\"older continuous.
Then by \prettyref{cor:GurevichMU-Pressure} we have that the our
Gurevich pressure coincides with the classical topological pressure
with respect to an at most countable alphabet. Since $g\circ\pi:I^{\N}\rightarrow\R$
is also H\"older continuous and if $\mathcal{P}\left(g\right)<\infty$,
then by \cite{MR2003772} there exists a unique $\sigma$-invariant
Borel probability measure $\mu_{g}$ on $I^{\N}$ -- called the \emph{Gibbs
measure} (for $g$) -- such that for some $C\geq1$ we have for all
$\omega\in I^{\ast}$
\begin{equation}
C^{-1}\leq\frac{\mu_{g}\left(\left[\omega\right]\right)}{\exp\left(S_{\omega}g-\left|\omega\right|\mathcal{P}(g)\right)}\leq C.\label{eq:Gibbs}
\end{equation}
Note that $\mu_{g}$ is the unique equilibrium state for $g$ with
respect to $\FI_{0}$. Since $\mu_{g}$ has no atoms, we can consider
$\mu_{g}$ also as a probability measure on $\Rep_{0}$.

We say that $\FI_{0}$ is \emph{strongly regular} \cite{MR2003772}
iff there exists $s>0$ such that $0<\mathcal{P}\left(s\varphi\right)<\infty$.
If $\FI_{0}$ is strongly regular, then $\mathcal{P}(\delta_{0}\varphi)=0$
where $\delta_{0}=\dim_{H}\left(\Rep_{0}\right)$ and there exists
a unique Gibbs measure $\mu_{\delta_{0}\varphi}$ on $\Rep_{0}$,
resp. on $I^{\N}$.

\subsection{Non-reflective case}

In this section, let $\FI_{0}:\Delta_{0}\to\left(0,1\right)$ be an
expanding $C^{2}$-Markov interval map, $g:\Rep_{0}\rightarrow\R$
is Hölder continuous and $\psi:\Rep_{0}\to\Z$ is constant on basic
intervals of level one, that is we consider the particular choice
$H=\Z$ and $\Psi_{\Z}:\Delta_{0}+\Z\to\Z$, $x\mapsto\psi\left(x-\left\lfloor x\right\rfloor \right)+\left\lfloor x\right\rfloor $.
We always assume that $\Psi_{\Z}$ is surjective, which is equivalent
to the transitivity of $\FI_{\Z}.$

The following theorem and lemma can be shown with the same arguments
as in \cite{MR4373996} combined with \prettyref{prop:gurevich_wrt_f}
and the exhaustion argument provided in \prettyref{lem:exhaustio_principle}.
\begin{thm}[Variational Formula for $\Z$-extensions]
\label{thm:fibre_pressure_vs_base_pressure}Suppose $g:\Rep\left(\FI_{0}\right)\to\R_{<0}$
Hölder continuous. Then we have
\begin{enumerate}
\item If $0\in\left(\underline{\psi},\overline{\psi}\right)$, then there
exists a unique $t\in\R$ such that 
\[
\mathcal{P}\left(g,\Z\right)=\mathcal{P}\left(t\psi+g\right)=\min{}_{q\in\R}\mathcal{P}\left(q\psi+g\right).
\]
\item In general, we have
\[
\mathcal{P}\left(g,\Z\right)=\inf{}_{q\in\R}\mathcal{P}\left(q\psi+g\right)
\]
 and $\mathcal{P}\left(g,\Z\right)=-\infty$ if $0\notin\left[\underline{\psi},\overline{\psi}\right]$.
If $0\in\left\{ \underline{\psi},\overline{\psi}\right\} $ then $\mathcal{P}\left(g,\Z\right)=\mathcal{P}\left(g_{I_{0}}\right)$
where $I_{0}=\left\{ i\in I:\psi(i)=0\right\} $.
\end{enumerate}
\end{thm}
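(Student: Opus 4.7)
The plan is to reduce $\mathcal{P}(g,\Z)$ to a constrained periodic-orbit count on the base $F_0$ and then optimize over a Lagrange parameter. By \prettyref{prop:gurevich_wrt_f} applied to $F_\Z$, we have
\[
\mathcal{P}(g,\Z)=\limsup_{n\to\infty}\frac{1}{n}\log\sum_{x\in\Per_n(F_\Z)\cap J_i}\exp S_n g_\Z(x)
\]
for any $i\in I$. The grid structure forces a periodic point $x\in J_i\subset(0,1)$ of $F_\Z$ to be precisely a periodic point of $F_0$ with vanishing Birkhoff sum $S_n\psi(x)=0$, and along such orbits $S_n g_\Z(x)=S_n g(x)$ by \prettyref{eq:Skew_ergodic_sum}. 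The task therefore becomes the computation of
\[
\mathcal{P}(g,\Z)=\limsup_{n\to\infty}\frac{1}{n}\log\sum_{x\in\Per_n(F_0)\cap J_i,\,S_n\psi(x)=0}\exp S_n g(x).
\]
The upper bound $\mathcal{P}(g,\Z)\le\inf_{q\in\R}\mathcal{P}(g+q\psi)$ is then immediate: for each $q\in\R$ we insert the identity $\exp(qS_n\psi(x))=1$ into the constrained sum, relax to the unconstrained sum over $\Per_n(F_0)\cap J_i$, and re-apply \prettyref{prop:gurevich_wrt_f} with potential $g+q\psi$.

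The matching lower bound is the heart of the argument and follows the strategy of \cite{MR4373996} combined with \prettyref{lem:exhaustio_principle}. For each finite $K\subset I$ with $\#K\ge 2$, the restricted base $F_0|_K$ is a finite-alphabet subshift of finite type, to which the Ruelle--Perron--Frobenius theory for the twisted transfer operator $\mathcal{L}_{g_K+(q+\mathrm{i}t)\psi}$ applies. When $0$ lies strictly between the extrema of $\psi|_K$ and $\psi$ is not cohomologous to a constant on $K$, a local central limit theorem yields the sharp asymptotic
\[
\sum_{x\in\Per_n(F_0|_K)\cap J_i,\,S_n\psi(x)=0}\exp S_n g(x)\asymp\frac{1}{\sqrt{n}}\exp\bigl(n\inf_{q\in\R}\mathcal{P}(g_K+q\psi)\bigr),
\]
with minimizer $q_K$ characterized by $\int\psi\,d\mu_{g_K+q_K\psi}=0$ through strict convexity of $q\mapsto\mathcal{P}(g_K+q\psi)$. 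Taking $\limsup\frac{1}{n}\log$ gives the desired lower bound on each finite subsystem, and \prettyref{lem:exhaustio_principle} together with the monotone convergence $\inf_q\mathcal{P}(g_K+q\psi)\nearrow\inf_q\mathcal{P}(g+q\psi)$ promotes the inequality to the full system.

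For the uniqueness of $t$ in (1), topological transitivity of $F_\Z$ excludes $\psi$ being cohomologous to an integer constant on $F_0$, forcing $q\mapsto\mathcal{P}(g+q\psi)$ to be strictly convex on the interior of its effective domain, so the infimum is attained at a unique $t\in\R$. Part (2) is handled by separating cases: if $0\notin[\underline{\psi},\overline{\psi}]$, then $S_n\psi$ has constant non-zero sign, so both the constrained sum and $\inf_q\mathcal{P}(g+q\psi)$ equal $-\infty$ (the latter by letting $q\to\pm\infty$ in the direction that makes $q\psi$ uniformly negative); if $0\in\{\underline{\psi},\overline{\psi}\}$, the constraint $S_n\psi=0$ confines orbits to $\bigcup_{i\in I_0}J_i$, reducing $\mathcal{P}(g,\Z)$ to $\mathcal{P}(g_{I_0})$, while on the variational side a straightforward exhaustion in $K$ shows $\mathcal{P}(g+q\psi)\to\mathcal{P}(g_{I_0})$ as $q\to\mp\infty$. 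The principal obstacle is the lower bound: the LCLT asymptotic must remain quantitative enough in $K$ so that passage to the infinite alphabet preserves the minimum, which requires uniform control over the spectral gap of the complex transfer operator $\mathcal{L}_{g_K+(q+\mathrm{i}t)\psi}$ as the subsystem grows, especially when the optimizing $q_K$ drifts with $K$.
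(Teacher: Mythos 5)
Your argument follows the same route as the paper, which defers the details to the finite-branch case of \cite{MR4373996} combined with \prettyref{prop:gurevich_wrt_f} and the exhaustion principle \prettyref{lem:exhaustio_principle}. The structural observation identifying $\Per_n(\FI_\Z)\cap J_i$ with $\{x\in\Per_n(\FI_0)\cap J_i:S_n\psi(x)=0\}$ (together with \prettyref{eq:Skew_ergodic_sum}), the Lagrangian-relaxation upper bound, the finite-subsystem Ruelle--Perron--Frobenius/local-CLT lower bound, the strict-convexity argument for uniqueness in part~(1), and the boundary-case analysis in part~(2) are all the intended steps.

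The one place where your account misidentifies the difficulty is the passage from $\sup_K\inf_q\mathcal{P}(g_K+q\psi)$ (which you obtain from exhaustion plus the finite-alphabet result) to $\inf_q\mathcal{P}(g+q\psi)$. You first assert this as a ``monotone convergence'' of infima and then, apparently uneasy about it, flag ``uniform control over the spectral gap of $\mathcal{L}_{g_K+(q+\mathrm{i}t)\psi}$'' as the principal obstacle. Neither is quite right: for each fixed $K$ the LCLT delivers $\mathcal{P}(g_K,\Z)=\inf_q\mathcal{P}(g_K+q\psi)$ as an exact identity, and no uniformity of spectral data across $K$ is required. What is actually needed is a convexity/coercivity argument for the interchange $\sup_K\inf_q=\inf_q\sup_K$, which does \emph{not} follow automatically from pointwise monotone convergence. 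Once $K$ contains two branches with $\psi$ of opposite sign, each $q\mapsto\mathcal{P}(g_K+q\psi)$ is convex and grows at least linearly as $|q|\to\infty$ at a rate uniform over all larger $K$; hence any sequence of near-minimizers $q_K$ is bounded, a convergent subsequence $q_K\to q^*$ satisfies $\mathcal{P}(g_{K_0}+q^*\psi)\le\sup_K\inf_q\mathcal{P}(g_K+q\psi)$ for every fixed $K_0$ by continuity, and taking $\sup_{K_0}$ gives the reverse inequality. When $0\in\{\underline{\psi},\overline{\psi}\}$ the near-minimizers do escape to infinity, but then the constrained periodic-orbit sum collapses onto $\Rep(\FI_{I_0})$ directly and the identification with $\mathcal{P}(g_{I_0})$ is immediate, as you correctly observe on the combinatorial side.
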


\begin{lem}
\label{lem:infimum_exchange}
\[
\inf\left\{ s\ge0:\inf_{q}\mathcal{P}\left(q\psi+s\varphi\right)\le0\right\} =\inf_{q}\inf\left\{ s\ge0:\mathcal{P}\left(s\varphi+q\psi\right)\le0\right\} 
\]
and 
\[
\inf\left\{ s\ge0:\inf_{q\le0}\mathcal{P}\left(q\psi+s\varphi\right)\le0\right\} =\inf_{q\le0}\inf\left\{ s\ge0:\mathcal{P}\left(s\varphi+q\psi\right)\le0\right\} .
\]
\end{lem}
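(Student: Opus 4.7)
The plan is to view both identities as instances of a generic exchange-of-infima principle for the two-parameter family $F(s,q) := \mathcal{P}(s\varphi + q\psi)$, and derive them from the attainment of $\inf_q F(s,q)$ (respectively $\inf_{q \leq 0} F(s,q)$) together with monotonicity of $F(\cdot,q)$ in $s$.

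First I would handle the direction ``left-hand side $\leq$ right-hand side'', which is elementary for both equalities: for any admissible $q_0$ (i.e.\ $q_0 \in \R$ in the first identity, $q_0 \leq 0$ in the second) and any $s \geq 0$ with $F(s,q_0) \leq 0$, one has $\inf_{q} F(s,q) \leq F(s,q_0) \leq 0$ (with the infimum restricted to $q \leq 0$ for the second identity), so $s$ belongs to the set appearing on the left. Taking the infimum over such $s$ and then over $q_0$ yields the claim.

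For the reverse direction I would rely on two ingredients. (i) Since $\varphi < 0$, the map $s \mapsto F(s,q)$ is monotone decreasing on $[0,\infty)$, so the set $\{s \geq 0 : \inf_q F(s,q) \leq 0\}$ is upward closed, and every $s$ strictly greater than its infimum $A$ belongs to it. (ii) Under the standing assumption $-1 = \underline\psi < 0 < \overline\psi$, \prettyref{thm:fibre_pressure_vs_base_pressure}(1) guarantees that for each $s \geq 0$ the convex map $q \mapsto F(s,q)$ attains its infimum over $\R$ at a unique $t(s) \in \R$. Combined with the convexity of $q \mapsto F(s,q)$ (which comes from the variational principle, since pressure is a supremum of affine functions of the potential), the constrained infimum $\inf_{q \leq 0} F(s,q)$ is then attained as well, at $t(s)$ if $t(s) \leq 0$ and at the boundary $q = 0$ otherwise. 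For the first identity one picks $s > A$: then $\inf_q F(s,q) = F(s,t(s)) \leq 0$, hence $\inf\{s' \geq 0 : F(s',t(s)) \leq 0\} \leq s$, so the right-hand side is $\leq s$; letting $s \searrow A$ concludes. The second identity follows by the same argument with $t(s)$ replaced by the constrained minimizer $q^\ast(s) \leq 0$.

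The main obstacle lies in the constrained case $q \leq 0$: a priori $\inf_{q \leq 0} F(s,q)$ need not be achieved, and one cannot pass from a minimizing sequence $q_n$ with $F(s,q_n) \to \inf_{q \leq 0} F(s,q) \leq 0$ to a bona fide inequality $F(s,q) \leq 0$ without an additional joint continuity/monotonicity argument. The observation that--by convexity of $q \mapsto F(s,q)$ together with existence of the global minimizer from \prettyref{thm:fibre_pressure_vs_base_pressure}(1)--the constrained minimum is always attained, at either the interior critical point $t(s)$ or at the boundary $q = 0$, is what makes the argument go through cleanly and uniformly for both equalities.
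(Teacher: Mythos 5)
Your proposal is correct, and the key insight—that the exchange of infima can fail without attainment, and that attainment of $\inf_q F(s,q)$ (unconstrained) and $\inf_{q\le0}F(s,q)$ (constrained, via convexity) is what closes the gap—is exactly the right point to address. The paper itself does not display a self-contained proof of this lemma, deferring instead to arguments from \cite{MR4373996} combined with \prettyref{prop:gurevich_wrt_f} and \prettyref{lem:exhaustio_principle}, so a direct comparison is not possible; your derivation from monotonicity of $s\mapsto F(s,q)$ plus \prettyref{thm:fibre_pressure_vs_base_pressure}(1) is a legitimate and clean route.

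Two small refinements worth noting. First, the unqualified claim that $\inf_{q\le0}F(s,q)$ is ``always attained'' can fail if $F(s,0)=\mathcal{P}(s\varphi)=+\infty$ while $t(s)>0$, since then $F(s,\cdot)\equiv+\infty$ on $(-\infty,0]$; but this is harmless, because in that situation $\inf_{q\le0}F(s,q)=+\infty>0$ and such $s$ do not lie in the set under consideration. You only need attainment for $s>A$, where the relevant infimum is $\le0$, hence finite; then convexity plus the global minimiser $t(s)$ do yield a genuine minimiser in $(-\infty,0]$. Second, there is an even more elementary variant that avoids attainment altogether: since $\sup\varphi=-\log\inf|F_0'|=:-c<0$ by the uniform expansion assumption, one has the strict monotonicity estimate $F(s,q)\le F(s',q)-(s-s')c$ for $s>s'$, uniformly in $q$. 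Given $s>A$, pick $s'\in(A,s)$ and $q$ with $F(s',q)<\frac{(s-s')c}{2}$; then $F(s,q)<0$, giving $\inf\{\sigma\ge0:F(\sigma,q)\le0\}\le s$ directly, and the constrained case follows verbatim by restricting $q\le0$. This bypasses \prettyref{thm:fibre_pressure_vs_base_pressure} entirely and handles both identities at once. Either way, your argument is sound.
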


\begin{lem}
\label{lem:inf_sq_criterion} With $s(q):=\inf\left\{ s\ge0:\mathcal{P}\left(s\varphi+q\psi\right)\le0\right\} $
for $q\in\R$, we have
\[
\inf_{q\le0}s(q)=\begin{cases}
\delta_{\Z}, & \text{if }\alpha_{\max}\ge0,\\
\delta_{0}, & \text{if }\text{\ensuremath{\alpha_{\max}\le0}}.
\end{cases}
\]
\end{lem}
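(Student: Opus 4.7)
The plan is to reduce both cases to a single lower bound on pressure coming from the variational principle, namely
\[
\mathcal{P}(\delta_0\varphi+q\psi)\;\ge\; h(\mu_{\delta_0\varphi})+\int(\delta_0\varphi+q\psi)\,\mathrm d\mu_{\delta_0\varphi}\;=\;\mathcal{P}(\delta_0\varphi)+q\alpha_{\max}\;=\;q\alpha_{\max},
\]
where the last equality uses strong regularity of $\FI_0$, which provides $\mathcal{P}(\delta_0\varphi)=0$ and the fact that $\mu_{\delta_0\varphi}$ is the unique equilibrium state for $\delta_0\varphi$. This bound is valid for all $q\in\R$ (with the convention $0\cdot(+\infty)=0$ at $q=0$), since $\psi\ge-1$ ensures the integral $\int\psi\,\mathrm d\mu_{\delta_0\varphi}\in[-1,+\infty]$ is well defined. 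Note also that $s(0)=\delta_0$ directly from the definition, and that $\delta_\Z=\inf_{q\in\R}s(q)\le\delta_0$ by \prettyref{lem:infimum_exchange}.

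The key auxiliary fact I would next record is that $s\mapsto\mathcal{P}(s\varphi+q\psi)$ is strictly decreasing on its domain of finiteness, because $\partial_s\mathcal{P}(s\varphi+q\psi)=\int\varphi\,\mathrm d\mu_{s\varphi+q\psi}<0$ (recall $\varphi<0$). Consequently, whenever one has $\mathcal{P}(\delta_0\varphi+q\psi)\ge 0$, monotonicity in $s$ forces $\mathcal{P}(s\varphi+q\psi)>0$ for every $s<\delta_0$, whence $s(q)\ge\delta_0$.

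For the case $\alpha_{\max}\le 0$ I would argue as follows. For every $q\le 0$ the variational bound gives $\mathcal{P}(\delta_0\varphi+q\psi)\ge q\alpha_{\max}\ge 0$, so by the monotonicity step $s(q)\ge\delta_0$. Combined with $s(0)=\delta_0$ this yields $\inf_{q\le 0}s(q)=\delta_0$, as desired. For the case $\alpha_{\max}\ge 0$, the same bound applied at $q\ge 0$ yields $\mathcal{P}(\delta_0\varphi+q\psi)\ge q\alpha_{\max}\ge 0$, hence $s(q)\ge\delta_0\ge\delta_\Z$; thus $\inf_{q\ge 0}s(q)\ge\delta_\Z$. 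Splitting
\[
\delta_\Z\;=\;\inf_{q\in\R}s(q)\;=\;\min\!\left(\inf_{q\le 0}s(q),\;\inf_{q\ge 0}s(q)\right)
\]
and using $\inf_{q\ge 0}s(q)\ge\delta_\Z$ forces $\inf_{q\le 0}s(q)=\delta_\Z$. The overlap at $\alpha_{\max}=0$ is consistent since then both arguments apply and one has $\delta_\Z=\delta_0$.

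The only delicate point I anticipate is the case $\alpha_{\max}=+\infty$, where the bound must be read correctly: for $q>0$ it degenerates to $\mathcal{P}(\delta_0\varphi+q\psi)=+\infty$, which by monotonicity still propagates to $\mathcal{P}(s\varphi+q\psi)=+\infty$ for all $s\le\delta_0$, hence $s(q)\ge\delta_0$ (possibly $+\infty$); the argument in Case $\alpha_{\max}\ge 0$ therefore goes through unchanged. The substantive idea is that $q=0$ is the correct reference point, and that the first-order convex behaviour of $q\mapsto\mathcal{P}(\delta_0\varphi+q\psi)$ at zero — encoded by $\alpha_{\max}$ — decides on which side of $0$ the unrestricted infimum of $s(q)$ is attained.
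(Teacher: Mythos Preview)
Your proof is correct and follows essentially the same approach as the paper: both arguments apply the variational principle at the equilibrium state $\mu_{\delta_0\varphi}$ to determine, via the sign of $\alpha_{\max}$, on which side of $q=0$ the infimum of $s(q)$ is attained, and then invoke \prettyref{lem:infimum_exchange} together with $s(0)=\delta_0$ to conclude. The only cosmetic difference is that the paper packages the variational bound as the linear subdifferential inequality $s(q)\ge q\,\alpha_{\max}/(-\int\varphi\,\mathrm d\mu_{\delta_0\varphi})+\delta_0$, whereas you first bound $\mathcal{P}(\delta_0\varphi+q\psi)\ge q\alpha_{\max}$ and then use strict monotonicity in $s$ (which is immediate from uniform expansion, $\varphi\le -c<0$, and does not actually require the derivative formula you quote); the content is the same.
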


\begin{proof}
First assume $\alpha_{\max}=\infty$. If $\mathcal{P}\left(\delta_{0}\varphi+q\psi\right)<\infty$
for some $q>0$, then $\alpha_{\max}<\infty$ by \cite[Proposition 2.6.13]{MR2003772}.
Hence, in this case, $\mathcal{P}\left(\delta_{0}\varphi+q\psi\right)=\infty$
for every $q>0$ and consequently $\inf_{q}s(q)=\inf_{q\le0}s(q)$.
Combining this with \prettyref{lem:infimum_exchange} gives $\inf_{q\le0}s(q)=\delta_{\Z}$.

Let $\alpha_{\max}<\infty$ and suppose $s(q)<\infty$ for $q\in\R$.
That is, there is $s>0$ with $\mathcal{P}\left(s\varphi+q\psi\right)\leq0$.
Then by an application of the variational principle (see \cite{MR1853808},
using $\int s\varphi+q\psi\d\mu_{\delta_{0}\varphi}>-\infty$ and
$\delta_{0}=h\left(\mu_{\delta_{0}\varphi}\right)/\int-\varphi\d\mu_{\delta_{0}\varphi}$)
we find 
\[
s\geq q\frac{\alpha_{\max}}{-\int\varphi\d\mu_{\delta_{0}\varphi}}+\delta_{0}.
\]
Taking infimum over $\left\{ s>0:\mathcal{P}\left(s\varphi+q\psi\right)\leq0\right\} $
gives $s(q)\geq q\alpha_{\max}/-\int\varphi\d\mu_{\delta_{0}\varphi}+\delta_{0}$
and $s(0)=\delta_{0}$. If $s(q)=\infty$ the inequality holds trivially.
Hence 
\begin{equation}
s(q)\geq q\frac{\alpha_{\max}}{-\int\varphi\d\mu_{\delta_{0}\varphi}}+\delta_{0}\quad\text{for all }q\in\R.\label{eq:sub_differential}
\end{equation}
It follows that $q\mapsto s\left(q\right)$ is minimised on the non-positive
if $\alpha_{\max}\ge0$. Hence, $\inf_{q\le0}s(q)=\inf_{q}s(q)=\delta_{\Z}$,
where the last equality follows from \prettyref{thm:fibre_pressure_vs_base_pressure}
and \prettyref{lem:infimum_exchange}.

\begin{figure}
\begin{tikzpicture}[scale=.9,line cap=round,line join=round,>=triangle 45,x=2cm,y=2cm] \begin{axis}[ x=6cm,y=6cm, axis lines=middle, axis line style = {-latex, thick}, ymajorgrids=false, xmajorgrids=false, xmin=-.7, xmax=0.4 , ymin=-0.15, ymax=0.8 , xtick={-0.352,-0.5 }, xticklabels={$q_0$,-0.5 }, ytick={ 0,0.204,0.44,0.7},yticklabels={0,$\delta_{\Psi_\Z}=\delta_{\Psi_\N}$,$\delta_0$,0.7}, xlabel={\normalsize $q$},            
ylabel={\normalsize $s(q)$}, major tick length=2mm,                 
every y tick label/.style={anchor=near yticklabel opposite,xshift=\pgfkeysvalueof{/pgfplots/major tick length}},] \clip(-1.2524882467730518,-0.3838949819586078) 
rectangle (0.6,1.2); 
\draw[line width=1.1pt, samples=40,domain=-1:0.4] plot(\x,{log10(exp(8*\x + 3) + exp(-3*\x)) / 3}); 
\draw [line width=0.5pt,dash pattern=on 2pt off 2pt,domain=-1.2:-0.0] plot(\x,{(--0.204-0*\x)/1});
\draw [line width=0.5pt,dash pattern=on 2pt off 2pt] (-0.352,-0) -- (-0.352,0.5); 
\end{axis} 
\end{tikzpicture}\hspace*{4mm}\begin{tikzpicture}[scale=.9,line cap=round,line join=round,>=triangle 45,x=2cm,y=2cm] \begin{axis}[ x=6cm,y=6cm, axis lines=middle, axis line style = {-latex, thick}, ymajorgrids=false, xmajorgrids=false, xmin=-.35, xmax=0.65 , ymin=-0.15, ymax=0.8 , xtick={0.32 ,0.5}, xticklabels={$q_0$ ,0.5}, ytick={ 0,0.198,0.485,0.7},yticklabels={0,$\delta_{\Psi_\Z}$,$\delta_0=\delta_{\Psi_\N}$,0.7}, xlabel={\normalsize $q$},            
ylabel={\normalsize $s(q)$}, major tick length=2mm,                 
] \clip(-1.2524882467730518,-0.3838949819586078) 
rectangle (0.6,1.2); 
\draw[line width=1.1pt, samples=40,domain=-0.3:0.5] plot(\x,{log10(5*exp(8*\x - 5) + 3*exp(-3*\x)) / 1}); 
\draw [line width=0.5pt,dash pattern=on 2pt off 2pt,domain=0:1.2] plot(\x,{(0.198-0*\x)/1});
\draw [line width=0.5pt,dash pattern=on 2pt off 2pt] (0.32,-0) -- (0.32,0.5); 
\end{axis} 
\end{tikzpicture}\caption{\label{fig:The-grph-of-s(q)}The graph of $q\protect\mapsto s(q)$
with the values $\delta_{0}$, $\delta_{\Z}$ and $\delta_{\N}$ for
$\alpha_{\max}>0$ (left) and for $\alpha_{\max}<0$ (right).}
\end{figure}
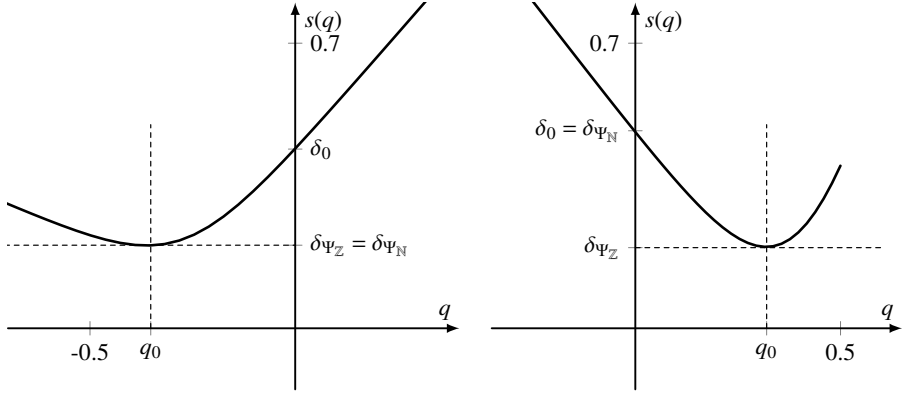
Assume $\alpha_{\max}\le0$. Then, using \prettyref{eq:sub_differential}
for $q$ negative, we deduce $\inf_{q\le0}s(q)=s(0)=\delta_{0}$.
Two typical graphs for $q\mapsto s(q)$ are illustrated in \prettyref{fig:The-grph-of-s(q)}.
\end{proof}
\begin{lem}
\label{lem:convergence_of_gibbs_measures}For $s\in\R$ such that
$\mathcal{P}\left(s\varphi\right)<\infty$ we have
\[
\liminf_{q\nearrow0}\int\psi\d\mu_{s\varphi+q\psi}\geq\int\psi\d\mu_{s\varphi}.
\]
\end{lem}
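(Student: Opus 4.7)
The strategy is to first prove the weak convergence $\mu_{s\varphi+q\psi}\to\mu_{s\varphi}$ on the symbolic space $\Sigma$ as $q\nearrow 0$, and then to apply a Fatou-type inequality to the non-negative function $\tilde{\psi}\coloneqq\psi+1$, which is well-defined because $\underline{\psi}\ge-1$ and continuous on $\Sigma$ as it depends only on the first symbolic coordinate.

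First, I would establish left-continuity of the pressure at $q=0$. The bound $\psi\ge-1$ combined with monotonicity of pressure yields $\mathcal{P}(s\varphi+q\psi)\le\mathcal{P}(s\varphi)+|q|$ for $q\le 0$; in particular, the Gibbs measures $\mu_{s\varphi+q\psi}$ are well-defined for all $q\in[-1,0]$. For the matching lower bound I would invoke the exhaustion principle \prettyref{lem:exhaustio_principle}: on any finite subsystem indexed by $K\subset I$ the corresponding pressure is a real-analytic function of $q$ and bounded above by $\mathcal{P}(s\varphi+q\psi)$, so passing $q\nearrow 0$ first and then taking the supremum over $K$ gives $\liminf_{q\nearrow 0}\mathcal{P}(s\varphi+q\psi)\ge\mathcal{P}(s\varphi)$. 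Moreover, because $q\psi$ is constant on every $1$-cylinder, the variation of $s\varphi+q\psi$ on any cylinder coincides with that of $s\varphi$; therefore the Gibbs constant $C$ in \prettyref{eq:Gibbs} can be chosen independently of $q\in[-1,0]$.

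These two facts together drive the weak convergence. The uniform Gibbs bound and the summability $\sum_{i\in I}\exp(\sup_{[i]}s\varphi)<\infty$ (which follows from $\mathcal{P}(s\varphi)<\infty$ and the full-branch structure $\FI_0(J_i)=(0,1)$ via \prettyref{cor:GurevichMU-Pressure}) force tightness of $\{\mu_{s\varphi+q\psi}\}_{q\in[-1,0]}$ on $\Sigma\subset I^{\N}$. Any weak subsequential limit $\mu$ along a sequence $q_{n}\nearrow 0$ inherits the Gibbs property for $s\varphi$ with the same constant $C$: this is obtained by passing to the limit $q_{n}\to 0$ in both inequalities of \prettyref{eq:Gibbs} on a fixed cylinder, using pressure continuity together with the pointwise convergence of $S_{n}(s\varphi+q_{n}\psi)$. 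Uniqueness of Gibbs measures for H\"older potentials with finite pressure on the topologically transitive shift then forces $\mu=\mu_{s\varphi}$, whence $\mu_{s\varphi+q\psi}\to\mu_{s\varphi}$ weakly as $q\nearrow 0$.

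Finally, I apply the Fatou lemma for weak convergence to the non-negative continuous function $\tilde{\psi}$. Using the bounded continuous truncations $\tilde{\psi}\wedge k$, weak convergence yields $\int(\tilde{\psi}\wedge k)\d\mu_{s\varphi}\le\liminf_{q\nearrow 0}\int\tilde{\psi}\d\mu_{s\varphi+q\psi}$ for each $k$, and monotone convergence as $k\to\infty$ upgrades this to $\int\tilde{\psi}\d\mu_{s\varphi}\le\liminf_{q\nearrow 0}\int\tilde{\psi}\d\mu_{s\varphi+q\psi}$; subtracting the constant $1$ from both sides yields the claim. The main obstacle is the weak convergence step: since $0$ may sit on the boundary of the finiteness domain of $q\mapsto\mathcal{P}(s\varphi+q\psi)$, standard analytic perturbation theory is unavailable, and one must combine the exhaustion principle with the essential observation that the level-$1$ constancy of $\psi$ produces a $q$-uniform Gibbs constant.
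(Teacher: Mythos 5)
Your proposal is correct and follows essentially the same route as the paper's proof: establish tightness of the family $\left(\mu_{s\varphi+q\psi}\right)_{q\in[\eta,0]}$, use the Gibbs property together with continuity of the pressure to conclude weak-$*$ convergence $\mu_{s\varphi+q\psi}\to\mu_{s\varphi}$, and then apply lower semi-continuity (Fatou with $\tilde\psi=\psi+1\ge0$) to get the claimed inequality. The only cosmetic difference is in the tightness step, where the paper bounds $\int-\varphi\d\mu_{s\varphi+q\psi}$ uniformly and invokes a cited lemma, whereas you derive tightness directly from the $q$-uniform Gibbs constant (a consequence of $\psi$ being constant on $1$-cylinders) together with the summability $\sum_{i}\exp\left(\sup_{[i]}s\varphi\right)<\infty$.
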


\begin{proof}
Since $\psi\geq-1$ we have for $\eta<0$, uniformly in $q\in\left[\eta,0\right]$,
\[
\int-\varphi\d\mu_{s\varphi+q\psi}\asymp\sum_{i}\sup\left(-\varphi|_{J_{i}}\right)\left(\exp\sup\varphi|_{J_{i}}\right)^{s}\left(\exp\sup\psi|_{J_{i}}\right)^{q}\exp\mathcal{P}\left(s\varphi+q\psi\right)\ll1.
\]
Therefore, by \cite[Lemma 3.2]{MR2343687} we have that $\left(\mu_{s\varphi+q\psi}:q\in\left[\eta,0\right]\right)$,
is a tight family of probability measures. The Gibbs property \prettyref{eq:Gibbs}
and the continuity of the pressure shows that we have weak-{*} convergence
of $\mu_{s\varphi+q\psi}$ to $\mu_{s\varphi}$ as $q\nearrow0$.
The lower semi-continuity of $\mu\mapsto\int\psi\d\mu$ then proves
the lemma.
\end{proof}
\begin{proof}[Proof of \prettyref{thm:DimGap_non-reflective}]
 We will make use of the fact that $\mathcal{P}\left(\delta_{0}\varphi+q\psi\right)<\infty$
if and only if $\sum_{x\in\Sigma:\sigma x=x}\exp\left(\left(\delta_{0}\varphi\left(x\right)+q\psi\left(x\right)\right)\right)<\infty$
(see \cite{MR2003772}). By \prettyref{thm:fibre_pressure_vs_base_pressure}
(2) we have a dimension gap if and only if 
\[
\inf\left\{ s>0:\inf_{q}\mathcal{P}\left(s\varphi+q\psi\right)\leq0\right\} <\delta_{0}.
\]
Suppose $\mathcal{P}\left(s\varphi+q'\psi\right)<\infty$ for some
$q'\in\R$, then, using $-1\leq\psi$, for $q\leq q'$ we have
\begin{align*}
\frac{1}{n}\log\sum_{x=\FI_{0}^{n}x}\e^{S_{n}s\varphi(x)+q\psi(x)} & \leq\frac{1}{n}\log\sum_{\omega\in\Sigma^{n}}\e^{S_{\omega}s\varphi+q'\psi+\left(q-q'\right)\psi}\\
 & \leq\frac{1}{n}\log\sum_{\omega\in\Sigma^{n}}\e^{S_{\omega}\left(s\varphi+q'\psi-\left(q-q'\right)\right)}\\
 & \leq q'-q+\frac{1}{n}\log\sum_{\omega\in\Sigma^{n}}\e^{S_{\omega}s\varphi+q'\psi}
\end{align*}
and consequently $\mathcal{P}\left(s\varphi+q\psi\right)<\infty$
for all $q\leq q'$. Using the regularity assumption it follows that
$p:\left(s,q\right)\mapsto\mathcal{P}\left(s\varphi+q\psi\right)$
is finite and strictly decreasing in the first coordinate on $\left(\delta_{0}-\epsilon,\infty\right)\times\left(-\infty,0\right]$
for some $\epsilon>0$. Assume $\alpha_{\max}>0$. We show that on
$\left(\delta_{0}-\epsilon,\infty\right)\times\left(-\epsilon,0\right]$,
for a possibly smaller $\epsilon>0$, the function $p$ is also strictly
increasing in the second coordinate. Indeed, \prettyref{lem:convergence_of_gibbs_measures}
implies $\liminf_{q\nearrow0}\int\psi\d\mu_{\delta_{0}\varphi+q\psi}\geq\int\psi\d\mu_{\delta_{0}\varphi}=\alpha_{\max}>0$
forcing $\int\psi\d\mu_{\delta_{0}\varphi+q\psi}=\partial/\partial q\mathcal{P}\left(\delta_{0}\varphi+q\psi\right)>0$
for some $-\epsilon<0$. By convexity of $q\mapsto\mathcal{P}(\delta_{0}\varphi+q\psi_{\alpha})$
the claim follows. Hence, there exists $s<\delta_{0}$ and $q<0$
with $\mathcal{P}\left(s\varphi+q\psi\right)\leq0$. Conversely, if
there exists $s<\delta_{0}$ and $q<0$ with $\mathcal{P}\left(s\varphi+q\psi\right)\leq0$,
then $\mathcal{P}\left(\delta_{0}\varphi+q\psi\right)<0$ and the
left derivative of $q\mapsto\mathcal{P}\left(\delta_{0}\varphi+q\psi\right)$
in $0$ must by convexity be positive. Since, using the variational
principle, $\alpha_{\max}=\int\psi\d\mu_{\delta_{0}\varphi}$ lies
in the subdifferential of $q\mapsto\mathcal{P}\left(\delta_{0}\varphi+q\psi\right)$
in $q=0$, we have $\alpha_{\max}>0$. This proves the first part.

If now $\alpha_{\max}\leq0$, then by \prettyref{lem:inf_sq_criterion}
$\delta_{0}=\inf\left\{ s>0:\inf_{q\leq0}\mathcal{P}\left(s\varphi+q\psi\right)\leq0\right\} $,
so a dimension gap can only occur if $\delta_{0}>\inf\left\{ s>0:\inf_{q>0}\mathcal{P}\left(s\varphi+q\psi\right)\leq0\right\} $.
If $\mathcal{P}\left(\delta_{0}\varphi+q\psi\right)=\infty$ for all
$q>0$ this is not possible. If on the other hand, $\mathcal{P}\left(\delta_{0}\varphi+q'\psi\right)<\infty$
some $q'>0$, then by the regularity assumption, the fact that $\mathcal{P}\left(\delta_{0}\varphi+q\psi\right)<\infty$
for all $q\leq q'$ and the convexity of $p$, we find that $p$ is
finite and strictly decreasing in the first coordinate on $\left\{ \left(s,q\right):q\in\left[0,q'\right],s\in[\delta_{0}-\left(q'-q\right)/\left(q'\right)\epsilon,\infty)\right\} $.
Since $\alpha_{\max}<0$ implies that $q\mapsto\mathcal{P}\left(\delta_{0}\varphi+q\psi\right)$
is strictly decreasing in a neighbourhood of $0$, there exists $s<\delta_{0}$
and $q>0$ with $\mathcal{P}\left(s\varphi+q\psi\right)\leq0$. If
finally $\alpha_{\max}=0$ and $\mathcal{P}\left(\delta_{0}\varphi+q'\psi\right)<\infty$
some $q'>0$, then $q\mapsto\mathcal{P}\left(\delta_{0}\varphi+q\psi\right)$
has a minimum in $0$ with value $0$. So there is no dimension gap.
\end{proof}
\begin{rem}
Finally, we would like to remark that by similar arguments as in the
proof of \prettyref{thm:DimGap_non-reflective} concerning the finiteness
and monotonicity of the pressure, $\inf_{q}\mathcal{P}\left(\delta_{0}\varphi+q\psi\right)<0$
implies the existence of $s<\delta_{0}$ and $q\in\R$, so that $\mathcal{P}\left(s\varphi+q\psi\right)\le0$.
Consequently, also $\inf_{q}\mathcal{P}\left(\delta_{0}\varphi+q\psi\right)<0$
is equivalent to the occurrence of a dimension gap.
\end{rem}

\subsection{Reflective case}

In this section, $g:\Sigma\rightarrow\R$ is supposed to be Hölder
continuous and $\psi,\psi_{1}:\Sigma\to\Z$ to be constant on 1-cylinders
with $\psi\geq-1$ and $M+\psi\geq\psi_{1}\geq\psi$.

The following lemma establishes a useful connection between the skew-product
system $\Psi_{\N}$ with reflective boundary and $\Psi_{\Z}$ the
corresponding system without reflective boundary; namely, the number
of visits to the reflective boundary corresponds to the maximum excursion
depth into the negative half-line of the system without reflective
boundary.

Obviously, $\FI_{\N}^{n}\left(x\right)-\FI_{\Z}^{n}\left(x\right)\ge0$
for all $x\in\Rep_{\N}$. In the following let $\left\Vert f\right\Vert $
denote the sup-norm of the real function $f$.
\begin{lem}
\label{lem:CombinatoriaKeyObservation} For all $x\in\Rep_{\N}\cap\left(0,1\right)$,
we have 
\[
\max_{0\leq k\leq n}-\left\lfloor \FI_{\Z}^{k}\left(x\right)\right\rfloor \leq\FI_{\N}^{n}\left(x\right)-\FI_{\Z}^{n}\left(x\right)\leq\max_{0\leq k\leq n}-\left\lfloor \FI_{\Z}^{k}\left(x\right)\right\rfloor +\left\Vert \psi_{1}-\psi\right\Vert 
\]
and
\[
\max_{0\leq k\leq n}-\left\lfloor \FI_{\Z}^{k}\left(x\right)\right\rfloor \leq\left\Vert \psi_{1}-\psi\right\Vert \cdot\card\left\{ 1\le j\le n:\FI_{\N}^{j}\left(x\right)\in\left[0,1\right]\right\} .
\]
\end{lem}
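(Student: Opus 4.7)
My strategy is to track the integer-valued \emph{defect} $d_n := \FI_\N^n(x) - \FI_\Z^n(x)$, show it is non-decreasing with jumps occurring precisely at the ``reflection times'', and then read off both displayed inequalities from this bookkeeping. The commutative diagram of \prettyref{subsec:Periodic-setting-and} gives $\kappa(\FI_\N^n(x)) = \kappa(\FI_\Z^n(x)) = \FI_0^n(x)$, so $d_n \in \Z$ with $d_0 = 0$. Combining the integer-shift invariance $\Psi_\Z(y+k) = \Psi_\Z(y) + k$ for $y \in (0,1)$, $k \in \Z$, with the identity $\Psi_\N - \Psi_\Z = (\psi_1-\psi)\1_{(0,1]}$, one checks inductively that
\[
d_{n+1} = d_n + (\psi_1-\psi)(\FI_0^n(x))\cdot\1_{\FI_\N^n(x) \in (0,1]},
\]
so the increments lie in $[0, \|\psi_1-\psi\|]$ and are strictly positive exactly at the \emph{reflection times} $n$, i.e., those $n$ with $\FI_\N^n(x) \in (0,1]$.

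The kinematic heart of the argument is the equivalence
\[
\FI_\N^n(x) \in (0,1] \iff -\lfloor \FI_\Z^n(x)\rfloor = d_n,
\]
which follows from $\FI_\N^n(x) = \FI_\Z^n(x) + d_n$ together with the fact that orbit points avoid $\Z$ (by the Grid structure condition $\Delta \cap \Z = \emptyset$), so the endpoints $-d_n,1-d_n$ of the relevant integer interval are automatically excluded. The lower bound in the first inequality is immediate: from $\FI_\N^n(x) > 0$, $d_n \in \Z$ and $\FI_\Z^n(x)\notin\Z$ we get $d_n \geq -\lfloor \FI_\Z^n(x)\rfloor$, and monotonicity of $d_n$ in $n$ upgrades this to $d_n \geq \max_{0\leq k\leq n}-\lfloor \FI_\Z^k(x)\rfloor$. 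For the upper bound, let $k^\ast$ be the largest reflection time in $\{0,\ldots,n-1\}$ (if none exists, $d_n = 0$ and we are done). By the equivalence, $d_{k^\ast} = -\lfloor \FI_\Z^{k^\ast}(x)\rfloor$, and since $d_n = d_{k^\ast+1} = d_{k^\ast} + \epsilon_{k^\ast}$ with $\epsilon_{k^\ast}\leq\|\psi_1-\psi\|$,
\[
d_n \leq -\lfloor \FI_\Z^{k^\ast}(x)\rfloor + \|\psi_1-\psi\| \leq \max_{0\leq k\leq n}-\lfloor \FI_\Z^k(x)\rfloor + \|\psi_1-\psi\|.
\]

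The second inequality then follows by telescoping: $d_n = \sum_{k=0}^{n-1}\epsilon_k \leq \|\psi_1-\psi\|\cdot\card\{0\leq k\leq n-1 : \FI_\N^k(x) \in (0,1]\}$, which combined with the already-established lower bound $\max_{0\leq k\leq n}-\lfloor \FI_\Z^k(x)\rfloor \leq d_n$ yields the stated inequality, after reindexing the counting set and noting that $(0,1]$ and $[0,1]$ coincide on the orbit since orbit points avoid integers. The only non-routine step is the kinematic equivalence $\FI_\N^n(x) \in (0,1] \iff -\lfloor \FI_\Z^n(x)\rfloor = d_n$, which pins down exactly when reflections occur and links the number of boundary visits of $\FI_\N$ to the running maximum of the negative excursion of $\FI_\Z$; once this observation is in place, both inequalities reduce to straightforward induction on $n$.
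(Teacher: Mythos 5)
Your overall approach---tracking the integer-valued defect $d_n=\FI_\N^n(x)-\FI_\Z^n(x)$, observing that it is non-decreasing with increments bounded by $\left\Vert\psi_1-\psi\right\Vert$ occurring exactly at the reflection times, and noting that $d_n=-\left\lfloor\FI_\Z^n(x)\right\rfloor$ precisely at those times---is the paper's argument, recast in slightly cleaner bookkeeping (the paper introduces entrance times $n_j$ and two auxiliary sequences $a_n,b_n$ but uses the same recursion and the same identity $a_{n_j}=-\lfloor\FI_\Z^{n_j}(x)\rfloor=b_{n_j}$). Your proof of the first display matches the paper's.

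There is, however, a gap in your treatment of the second display. You correctly derive
\[
\max_{0\le k\le n}-\left\lfloor\FI_\Z^k(x)\right\rfloor\ \le\ d_n\ \le\ \left\Vert\psi_1-\psi\right\Vert\cdot\card\left\{0\le k\le n-1:\FI_\N^k(x)\in(0,1]\right\},
\]
but the passage from the set $\left\{0\le k\le n-1\right\}$ to $\left\{1\le j\le n\right\}$ is not a ``reindexing'': the former always contains $k=0$ (since $x\in(0,1)$) and the latter never does, so the two cardinalities can genuinely differ. Concretely, suppose $\psi(x)=-1$ and $\psi_1(x)\ge 1$ on the branch containing $x$; then $\FI_\Z(x)=\FI_0(x)+\psi(x)\in(-1,0)$, so the left-hand side of the second display is at least $1$ at $n=1$, whereas $\FI_\N(x)=\FI_0(x)+\psi_1(x)>1$ makes the set $\left\{1\le j\le 1:\FI_\N^j(x)\in[0,1]\right\}$ empty and the right-hand side equal to $0$. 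In fairness, the paper's own proof of the second assertion bounds $b_n\le a_{n+1}\le\left\Vert\psi_1-\psi\right\Vert\,\card\left\{j\in\N:n_j\le n\right\}$, which (since $n_0=0$) is exactly the count over $\{0,\dots,n\}$ that you derive; so the printed lemma appears to carry an off-by-one in the indexing of the counting set, harmless for its later qualitative use in the proof of Theorem~\ref{thm:Onesided_fibre-pressure_vs_base-pressure}. Still, what you derive and what the statement asserts are not the same, and you should either prove the corrected version (with the range $0\le j\le n$) or flag explicitly that the stated form does not follow from your bound.
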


\begin{proof}
First we define the sequence of entrance times of $x\in\Rep_{\N}\cap\left(0,1\right)$
to $\left(0,1\right)$. Let $n_{0}\coloneqq0$ and $n_{j}\coloneqq\inf\left\{ n>n_{j-1}\colon\FI_{\N}^{n}\left(x\right)\in\left(0,1\right)\right\} $
for $j\geq1$; let $j_{0}\in\N$ be the smallest index such that $n_{j_{0}+1}=+\infty$
and $j_{0}\coloneqq+\infty$ if no such index exists.. Setting $a_{n}\coloneqq\FI_{\N}^{n}\left(x\right)-\FI_{\Z}^{n}\left(x\right)$,
we observe that $\left(a_{n}\right)$ is increasing with $a_{0}=0$
and, more precisely,
\begin{equation}
a_{n+1}=\begin{cases}
a_{n}+\left(\psi_{1}-\psi\right)\left(\FI_{0}^{n}\left(x-\left\lfloor x\right\rfloor \right)\right) & n=n_{j}\,\text{ for some }j\leq j_{0},\\
a_{n} & \text{else.}
\end{cases}\label{eq:an_recursion}
\end{equation}
Consequently, $a_{n_{j+1}}-a_{n_{j}}\leq\left\Vert \psi_{1}-\psi\right\Vert $.
On the other hand, setting $b_{n}\coloneqq\max_{0\leq k\leq n}-\left\lfloor \FI_{\Z}^{k}\left(x\right)\right\rfloor $,
$n\geq0$, we observe that $\left(b_{n}\right)$ is increasing and
non-negative. For all $j\in\N$, we find $b_{n_{j}}=-\left\lfloor \FI_{\Z}^{n_{j}}\left(x\right)\right\rfloor =a_{n_{j}}$.
To see this, observing that $a_{n_{0}}=a_{0}=0=b_{0}=b_{n_{0}}$ and,
by construction, $-\left\lfloor \FI_{\Z}^{n_{j}}\left(x\right)\right\rfloor =a_{n_{j}}$
and, for all $n_{j}<k\leq n_{j+1}$, we have $b_{k}\leq a_{k}$. In
particular, this implies $b_{n}\leq a_{n}$ for all $n\in\N$. Further,
for $n_{j}<k\leq n_{j+1},$$j\in\N_{<j_{0}}$, we deduce
\[
-\left\Vert \psi_{1}-\psi\right\Vert +a_{k}\leq-\left(a_{n_{j+1}}-a_{n_{j}}\right)+a_{k}\leq a_{n_{j}}=b_{n_{j}}\leq b_{k}\leq a_{k},
\]
which completes the proof of the first assertion. Using \prettyref{eq:an_recursion}
and the lower bound in the first assertion, we estimate
\[
\max_{0\leq k\leq n}-\FI_{\Z}^{k}\left(x\right)=b_{n}\le a_{n+1}=\sum_{j\in\N:n_{j}\le n}\left(\psi_{1}-\psi\right)\left(\FI_{0}^{n_{j}}\left(x-\left\lfloor x\right\rfloor \right)\right)\le\card\left\{ j\in\N:n_{j}\le n\right\} \left\Vert \psi_{1}-\psi\right\Vert ,
\]
which gives the second assertion.
\end{proof}
We now give a first basic inequality comparing the pressures with
resepct to $\H=0,\Z$ and $\N$.
\begin{prop}
\label{prop:Basic_Inequality} For $g:\Rep_{0}\rightarrow\R_{<0}$
H\"older continuous we have
\[
\mathcal{P}\left(g,\Z\right)\leq\mathcal{P}\left(g,\N\right)\leq\mathcal{P}\left(g\right).
\]
\end{prop}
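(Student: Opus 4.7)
The plan is to treat the two inequalities by distinct arguments: the second is a direct consequence of the commutative structure at level zero, while the first follows from the exhaustion principle combined with a translation trick.

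For $\mathcal{P}(g,\N)\le\mathcal{P}(g)$, I would exploit the commutative diagram $\FI_0\circ\kappa=\kappa\circ\FI_\N$ together with the fact that $\kappa$ acts as the identity on $J_i\subset(0,1)$. For any $x\in\Per_n(\FI_\N)\cap J_i$ one has $\FI_0^n(x)=\kappa(\FI_\N^n(x))=\kappa(x)=x$, which forces $\Per_n(\FI_\N)\cap J_i\subseteq\Per_n(\FI_0)\cap J_i$; moreover, the Birkhoff sums coincide since $g_\N(\FI_\N^k(x))=g(\kappa(\FI_\N^k(x)))=g(\FI_0^k(x))$. The partition function at $J_i$ defining $\mathcal{P}(g,\N)$ is therefore termwise dominated by the one defining $\mathcal{P}(g)$, and taking $\limsup\frac{1}{n}\log$ yields the inequality.

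For $\mathcal{P}(g,\Z)\le\mathcal{P}(g,\N)$, I would invoke the exhaustion principle \prettyref{lem:exhaustio_principle} applied to $\FI_\Z$, writing $\mathcal{P}(g,\Z)=\sup_{K'}\mathcal{P}(g|_{K'},\FI_\Z|_{K'})$ over finite $K'\subset I\times\Z$. For each such $K'$, pick $c\in\N$ large enough that the translated set $K:=\{(i,h+c):(i,h)\in K'\}$ lies inside $I\times\N_{\ge 1}$; this is possible since $K'$ is finite. The shift $(i,h)\mapsto(i,h+c)$ is then a topological conjugacy between the finite-alphabet Markov systems $(K',\FI_\Z|_{K'})$ and $(K,\FI_\N|_K)$: because $K$ avoids height zero, the reflection rule $\psi_1$ never intervenes in $\FI_\N|_K$, so both systems obey the same transition rule $(i,h)\to(j,h+\psi(i))$ whenever $(i,j)$ is $\FI_0$-admissible; and the potentials $g_\Z$ and $g_\N$, both equal to $g\circ\kappa$, are invariant under integer translation. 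Consequently $\mathcal{P}(g|_{K'},\FI_\Z|_{K'})=\mathcal{P}(g|_K,\FI_\N|_K)\le\mathcal{P}(g,\N)$, where the final step is exhaustion applied to $\FI_\N$. Taking the supremum over $K'$ delivers the claim.

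The main obstacle I anticipated concerns the tempting attempt to inject $\Per_n(\FI_\Z)\cap J_i$ directly into $\Per_n(\FI_\N)\cap J_i$ through a height shift $x\mapsto x+|\min_k\lfloor\FI_\Z^k(x)\rfloor|+1$: this produces $\FI_\N$-periodic orbits sitting at orbit-dependent cylinders $J_i+\ell(x)$ with $\ell(x)\in\{1,\dots,n\}$, and reassembling the estimates at a single cylinder through topological-transitivity bridges incurs bridge costs of order $e^{O(n)}$ that overwhelm the bound. The exhaustion route bypasses this entirely, because at the level of finite sub-systems the $\Z$- and $\N$-extensions become literally identical once all heights are lifted above the reflection boundary.
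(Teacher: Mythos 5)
Your proof is correct, and the argument for the first inequality is genuinely different from the paper's. The paper fixes a basic interval $L$ inside $(0,1)$, uses \prettyref{lem:CombinatoriaKeyObservation} to control how far a bounded-depth $\FI_{\Z}$-periodic orbit can drift under $\FI_{\N}$, and then appeals to transitivity of $\FI_{\N}$ to append a return word of bounded length $R_{k}$ that closes the orbit into an $\FI_{\N}$-periodic orbit in $L$; bounded distortion plus the exhaustion principle then finish. Your approach instead exploits directly that $\Psi_{\N}$ and $\Psi_{\Z}$ agree on $(1,\infty)$, so an integer translation conjugates any finite subsystem of $\FI_{\Z}$ onto a finite subsystem of $\FI_{\N}$ supported entirely above height one; since the periodic extensions $g_{\Z}$ and $g_{\N}$ are translation-invariant, the Gurevich pressures of these finite subsystems coincide, and the two directions of \prettyref{lem:exhaustio_principle} (approximation from below for $\FI_{\Z}$, termwise domination for $\FI_{\N}$) close the argument. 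This is shorter and conceptually cleaner: it bypasses \prettyref{lem:CombinatoriaKeyObservation} and the transitivity bridge, and it isolates the essential reason the inequality holds, namely that the reflection rule is invisible away from the boundary. What the paper's heavier argument buys is an explicit quantitative comparison of partition functions at a fixed cylinder, which sits naturally alongside \prettyref{lem:CombinatoriaKeyObservation}, a lemma the paper needs again in the proof of \prettyref{thm:Onesided_fibre-pressure_vs_base-pressure}. Your handling of $\mathcal{P}(g,\N)\le\mathcal{P}(g)$ simply unfolds what the paper calls obvious: via $\kappa\circ\FI_{\N}=\FI_{0}\circ\kappa$ and \prettyref{eq:Skew_ergodic_sum}, every $\FI_{\N}$-periodic point in $J_{i}\subset(0,1)$ is also $\FI_{0}$-periodic with identical Birkhoff sum of $g$, so the partition functions compare termwise.
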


\begin{proof}
The second inequality is obvious. To prove the first inequality, we
work with the coding $\pi=\pi_{\FI_{0}}:I^{\N}\rightarrow\overline{\Delta_{0}}$
for the base map $\FI$. There exists $\nu\in I^{2}$ such that the
basic interval $L=J_{\nu}$ satisfies $\overline{L}\subset(0,1)$.
Note that by transitivity of $\FI_{\N}$ for each $k\in\N$, there
exists a finite set $I_{0}=I_{0}(k)\subset I$ and $R_{k}\ge0$ such
that for each $0\le\ell\le\left\Vert \psi-\psi_{1}\right\Vert k$
there exists $s\le R_{k}$ and a word $\tau=\tau_{1}\cdots\tau_{s}\in I_{0}^{s}$
such that$\FI_{\N}^{s}(\ell+J_{\tau})\subset L$, Let $x\in\Per_{n}\left(\FI_{\Z}\right)\cap L$
with $\max_{0\leq m\leq n}\left|\FI_{\Z}^{m}x\right|\leq k$. Then,
by the skew-product property, $x\in\Per_{n}\left(\FI_{0}\right)$
and there exists a unique $\omega=\overline{\omega_{0}\cdots\omega_{n-1}}\in I^{\N}$
such that $x=\pi(\omega)$. By \prettyref{lem:CombinatoriaKeyObservation}
we have $\FI_{\N}^{n}(x)\le\left\Vert \psi-\psi_{1}\right\Vert k$.
Hence, for $\ell=\ell_{x}:=\left\lfloor \FI_{\N}^{n}(x)\right\rfloor $
there exists $s=s_{x}$ and $\tau\in I^{s}$ such that $\FI_{\N}^{n+s}(J_{\omega_{0}\cdots\omega_{n-1}\tau_{0}\cdots\tau_{s-1}})\subset(0,1)$.
We have $\overline{\omega\tau}\in\Per_{n+s}\left(\sigma\right)$ and
by \prettyref{lem:L_impies_in_Rep(f)} there exists $x'\in\Per_{n+s}\left(\FI_{0}\right)\cap L$
and the map $x\mapsto x'$ is one-to-one. Since $\FI_{\N}^{n+s}(J_{\omega_{0}\dots\omega_{n-1}\tau_{0}\dots\tau_{s-1}})\subset(0,1)$
we have $x'\in\Per_{n+s}\left(\FI_{\N}\right)\cap L$. Since $x,x'\in J_{\omega_{0}\cdots\omega_{n-1}}\subset L$
we have by the bounded distortion property

\begin{align*}
\sum_{{x\in\Per_{n}\left(\FI_{\Z}\right)\cap L\atop \max_{0\leq\ell\leq n}\left|\FI_{\Z}^{\ell}x\right|\leq k}}\e^{S_{n}g\left(x\right)} & \leq D_{g}\left(\sum_{i\in F_{k}}\max\e^{-g_{i}}\right)^{R_{k}}\sum_{\ell=0}^{R_{k}}\sum_{x'\in\Per_{n+\ell}\left(\FI_{\Z}\right)\cap L}\e^{S_{n+\ell}g\left(x'\right)}\\
 & \leq D_{g}\left(\sum_{i\in F_{k}}\max\e^{-g_{i}}\right)^{R_{k}}R_{k}\max_{0\leq\ell\leq R_{k}}\sum_{x'\in\Per_{n+\ell}\left(\FI_{\Z}\right)\cap L}\e^{S_{n+\ell}g\left(x'\right)}.
\end{align*}
By \prettyref{lem:exhaustio_principle} this implies $\mathcal{P}\left(g,\Z\right)\leq\mathcal{P}\left(g,\N\right)$.
\end{proof}
\begin{thm}[Variational Formula for $\N$-extensions]
\label{thm:Onesided_fibre-pressure_vs_base-pressure}For $g:\Rep_{0}\rightarrow\R_{<0}$
Hölder continuous we have
\[
\mathcal{P}\left(g,\N\right)=\inf_{q\leq0}\mathcal{P}(g+q\psi).
\]
\end{thm}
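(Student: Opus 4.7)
My plan is to establish the two inequalities separately. For the upper bound $\mathcal{P}(g,\N)\le\inf_{q\le 0}\mathcal{P}(g+q\psi)$, I would fix an arbitrary $q\le 0$ and pick $\nu\in\Sigma^{2}$ with $\overline{L}\subset(0,1)$ for $L\coloneqq J_\nu$. For $x\in\Per_n(\FI_\N)\cap L$ the skew-product identity $\kappa\circ\FI_\N=\FI_0\circ\kappa$ yields $x\in\Per_n(\FI_0)\cap L$, and combining $\FI_\N^n(x)=x$ with $\kappa(\FI_\Z^n(x))=x$ gives $\FI_\N^n(x)-\FI_\Z^n(x)=-\lfloor\FI_\Z^n(x)\rfloor$. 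Applying \prettyref{lem:CombinatoriaKeyObservation} then forces $-\lfloor\FI_\Z^n(x)\rfloor=\max_{0\le k\le n}(-\lfloor\FI_\Z^k(x)\rfloor)\ge 0$, so $S_n\psi(x)=\lfloor\FI_\Z^n(x)\rfloor\le 0$ and $\e^{qS_n\psi(x)}\ge 1$. Thus
\[
\sum_{x\in\Per_n(\FI_\N)\cap L}\e^{S_n g(x)}\;\le\;\sum_{x\in\Per_n(\FI_0)\cap L}\e^{S_n(g+q\psi)(x)},
\]
and passing to $\limsup\frac{1}{n}\log$ via \prettyref{prop:gurevich_wrt_f} delivers $\mathcal{P}(g,\N)\le\mathcal{P}(g+q\psi)$; infimizing over $q\le 0$ completes this direction.

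For the lower bound I would run a case analysis on the convex function $F(q)\coloneqq\mathcal{P}(g+q\psi)$. If $\partial_+F(0)=\int\psi\d\mu_g\ge 0$, convexity ensures $F$ attains its $\R$-infimum at some $q^*\le 0$, so $\inf_{q\le 0}F=\inf_{q\in\R}F=\mathcal{P}(g,\Z)$ by \prettyref{thm:fibre_pressure_vs_base_pressure}, and \prettyref{prop:Basic_Inequality} then gives $\mathcal{P}(g,\N)\ge\mathcal{P}(g,\Z)$. In the opposite regime $\partial_+F(0)<0$ the function $F$ is strictly decreasing on $(-\infty,0]$, so $\inf_{q\le 0}F=F(0)=\mathcal{P}(g)$; combined with $\mathcal{P}(g,\N)\le\mathcal{P}(g)$ from \prettyref{prop:Basic_Inequality}, the task reduces to proving $\mathcal{P}(g,\N)\ge\mathcal{P}(g)$ under negative drift. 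I would handle this via \prettyref{lem:exhaustio_principle}: monotonicity of $\N$-extensions in the base alphabet yields $\mathcal{P}(g,\N)\ge\mathcal{P}(g_K,\N)$ for each finite $K\subset I$, and for $K$ large enough the truncated drift $\int\psi\d\mu_{g_K}$ is still negative (by $w^*$-convergence of Gibbs measures in the spirit of \prettyref{lem:convergence_of_gibbs_measures}, using boundedness of $\psi_-$). Once the finite-alphabet identity $\mathcal{P}(g_K,\N)=\mathcal{P}(g_K)$ is established in this negative-drift regime, taking the supremum over $K$ and invoking \prettyref{lem:exhaustio_principle} once more yields $\mathcal{P}(g,\N)\ge\sup_K\mathcal{P}(g_K)=\mathcal{P}(g)$.

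The main obstacle will be this finite-alphabet negative-drift identity. By \prettyref{lem:CombinatoriaKeyObservation}, $\FI_{\N,K}$-periodicity in $L$ is, up to a bounded additive loss controlled by $\|\psi_1-\psi\|$, equivalent to the running-minimum condition $\lfloor\FI_\Z^n(x)\rfloor=\min_{0\le k\le n}\lfloor\FI_\Z^k(x)\rfloor$ on $\FI_{0,K}$-periodic orbits in $L$. The remaining step is a quantitative cycle-lemma / Sparre Andersen statement for the bounded-increment walk $(S_k\psi)$ under $\mu_{g_K}$: with strictly negative drift each $\FI_{0,K}$-periodic orbit admits at least one cyclic rotation whose $\psi$-partial sums attain their minimum at the endpoint, and by transitivity these rotations can be rerouted back to $L$ at bounded multiplicative cost. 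The resulting symbolic loss is at most polynomial in $n$ and dissolves in the $\frac{1}{n}\log$ limit, furnishing $\mathcal{P}(g_K,\N)\ge\mathcal{P}(g_K)$.
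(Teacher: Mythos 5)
Your upper bound is essentially the paper's argument (the observation that $S_n\psi(x)\le 0$ for $x\in\Per_n(\FI_\N)\cap L$, then compare with $\Per_n(\FI_0)\cap L$). Your non-negative-drift lower bound, via the basic inequality $\mathcal{P}(g,\N)\ge\mathcal{P}(g,\Z)$ together with the $\Z$-variational formula, also matches the first case of the paper's proof, modulo the fact that the paper case-splits on which side of $0$ realises $\inf_q\mathcal{P}(g+q\psi)$ rather than on $\int\psi\d\mu_g$ (a cosmetic difference, though your formulation silently requires $\mathcal{P}(g)<\infty$ for $\mu_g$ to exist; the paper dispatches $\mathcal{P}(g)=\infty$ separately).

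The negative-drift lower bound is where you genuinely diverge from the paper, and this is where the gaps lie. The paper works directly with the infinite-alphabet Gibbs measure $\mu_g$: from $\int\psi\d\mu_g\le 0$ it deduces $\liminf_n S_n\psi=-\infty$ $\mu_g$-a.e.\ (LLN or LIL), then the \emph{second} assertion of \prettyref{lem:CombinatoriaKeyObservation} shows $\FI_\N^n(x)\in[0,1]$ infinitely often a.e., so $\sum_n\mu_g\{\FI_\N^n\in(0,1)\}=\infty$, and the Gibbs property converts this divergence into $\mathcal{P}(g,\N)\ge\mathcal{P}(g)$. Your proposal replaces this with a finite-base-alphabet reduction plus a cycle-lemma argument on periodic words. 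That scheme has real problems as stated. First, the running-minimum condition $S_n\psi(x)=\min_{0\le k\le n}S_k\psi(x)$ is only \emph{necessary}, not sufficient, for $x\in\Per_n(\FI_\N)\cap L$: writing $a_n=\FI_\N^n(x)-\FI_\Z^n(x)$ and $b_n=\max_k(-\lfloor\FI_\Z^k x\rfloor)$, $\FI_\N$-periodicity forces $a_n=b_n$, whereas the lemma only guarantees $b_n\le a_n\le b_n+\|\psi_1-\psi\|$; so a rotated orbit with running min at the end can still have $\FI_\N^n(x)$ landing anywhere in $(0,1+\|\psi_1-\psi\|)$, and you must append a descent block to close it up for $\FI_\N$. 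Second, the rerouting back into $L$ that you invoke ``by transitivity'' inserts a word whose $\psi$-partial sums perturb the running minimum; to be sure the minimum survives at the end you must restrict to orbits with $S_n\psi\le -\varepsilon n$ (which dominate the pressure under negative drift only after a large-deviations estimate you do not supply). Third, your appeal to $w^*$-convergence $\mu_{g_K}\to\mu_g$ giving $\int\psi\d\mu_{g_K}<0$ eventually is directionally wrong without more: $\psi\ge-1$ plus Fatou gives only $\liminf_K\int\psi\d\mu_{g_K}\ge\int\psi\d\mu_g$, which is the wrong inequality; one needs a domination argument (via the Gibbs property and $\mathcal{P}(g+q\psi)<\infty$ for small $q>0$) to get genuine convergence. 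None of these three is fatal, but each is a nontrivial lemma you would have to prove, and collectively they make the cycle-lemma route considerably heavier than the paper's measure-theoretic argument.
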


\begin{rem}
Note that $\inf_{q\in\R}\mathcal{P}(g+q\psi)=\mathcal{P}(g)$ implies
$\mathcal{P}\left(g,\Z\right)=\mathcal{P}\left(g\right)$.
\end{rem}

\begin{proof}
For $x\in L\cap\Rep_{\N}$, $L$ as above, and with $\FI_{\N}^{n}x=x$
we have $S_{n}\psi(x)\le0$ since $\psi\le\psi_{1}$. Hence, for $q\leq0$,
it follows that 
\begin{align*}
\sum_{x\in\Per_{n}\left(\FI_{\N}\right)\cap L}\e^{S_{n}g\left(x\right)} & \leq\sum_{x\in\Per_{n}\left(\FI_{0}\right)\cap L,S_{n}\psi\left(x\right)\leq0}\e^{S_{n}\left(g+q\psi\right)\left(x\right)}\\
 & \le\sum_{x\in\Per_{n}\left(\FI_{0}\right)\cap L}\e^{S_{n}\left(g+q\psi\right)\left(x\right)}
\end{align*}

This shows $\mathcal{P}\left(g,\N\right)\leq\inf_{q\leq0}\mathcal{P}\left(g+q\psi\right)$
and if $\inf_{q\leq0}\mathcal{P}(g+q\psi)=\inf_{q\in\R}\mathcal{P}(g+q\psi)$
then by\textbf{ }\prettyref{prop:Basic_Inequality} and \prettyref{thm:fibre_pressure_vs_base_pressure}
we have 
\[
\mathcal{P}\left(g,\Z\right)=\mathcal{P}\left(g,\N\right).
\]

Now, assume $\inf_{q>0}\mathcal{P}(g+q\psi)=\inf_{q\in\R}\mathcal{P}(g+q\psi)$.
Then $\mathcal{P}\left(g\right)=\infty$ implies $\mathcal{P}(g+q\psi)=\infty$
for all $q\in\R$ because $\psi\geq-1$. This gives $\mathcal{P}\left(g\right)=\mathcal{P}\left(g,\Z\right)=\infty$
and the equality follows from \prettyref{prop:Basic_Inequality}.

Hence, for the remaining case we can assume that $\mathcal{P}(g)<\infty$.
The obvious inequality $\mathcal{P}\left(g,\N\right)\le\mathcal{P}(g)$
has been established in \prettyref{prop:Basic_Inequality}. We now
prove the reverse inequality. Since $\inf_{q>0}\mathcal{P}(g+q\psi)=\inf_{q\in\R}\mathcal{P}(g+q\psi)$,
we find $\mathcal{P}(g+q\psi)<\infty$ on an open interval $(-\infty,\epsilon)$
for some $\epsilon>0$. By \cite{MR2003772} we conclude that $\int\psi\,d\mu_{g}=\partial/\partial q\left(\mathcal{P}(g+q\psi)\right)_{q=0}\leq0$.
Hence, we have $\liminf_{n}S_{n}\psi=-\infty$ $\mu_{g}$-a.\,e\@.
using the Law of Iterated Logarithm for the case $\int\psi\d\mu_{g}=0$
and the Law of Large Numbers for the case $\int\psi\d\mu_{g}<0$.
By the second assertion in \prettyref{lem:CombinatoriaKeyObservation},
for $\mu_{g}$-a.\,e\@. $x\in L$, we have $\FI_{\N}^{n}\left(x\right)\in\left(0,1\right)$
for infinitely many $n\in\N$. Hence, by the Borel--Cantelli Lemma
we have 
\[
\sum_{n\ge1}\mu_{g}\left\{ x\in\Rep_{\N}\cap L:\FI_{\N}^{n}\left(x\right)\in\left(0,1\right)\right\} =\infty.
\]
Recall that for each $x\in\Rep_{0}$ we define $\omega\left(x,n\right)\coloneqq\pi_{\FI_{0}}^{-1}\left(x\right)|_{n}\in I^{n}$.
Then, by the identification of periodic points stated in \prettyref{lem:L_impies_in_Rep(f)},
for every $n\in\N$, 
\[
\bigcup_{x\in\Per_{n}\left(\FI_{\N}\right)\cap L}J_{\omega\left(x,n\right)}\supset\left\{ x\in\Rep_{\N}\cap L:\FI_{\N}^{n}\left(x\right)\in\left(0,1\right)\right\} .
\]
With the Gibbs property of $\mu_{g}$ and \prettyref{eq:Skew_ergodic_sum}
we find 
\[
\infty=\sum_{n\ge1}\mu_{g}\left\{ x\in\Rep_{\N}\cap L:\FI_{\N}^{n}\left(x\right)\in\left(0,1\right)\right\} \ll\sum_{n\ge1}\sum_{x\in\Per_{n}\left(\FI_{\N}\right)\cap L}\e^{S_{n}g_{\N}\left(x\right)-n\mathcal{P}\left(g\right)}.
\]
From this it follows that $\mathcal{P}\left(g,\N\right)-\mathcal{P}\left(g\right)\ge0$
. This proves the remaining case.
\end{proof}
\begin{cor}
\label{cor:Onesided_vs_two_sided-1}For $g:\Rep_{0}\rightarrow\R_{<0}$
Hölder continuous we have
\begin{align*}
\mathcal{P}\left(g,\N\right) & =\begin{cases}
\mathcal{P}\left(g,\Z\right), & \text{if }\inf_{q\leq0}\mathcal{P}(g+q\psi)=\inf_{q\in\R}\mathcal{P}(g+q\psi),\\
\mathcal{P}(g), & \text{if }\inf_{q>0}\mathcal{P}(g+q\psi)=\inf_{q\in\R}\mathcal{P}(g+q\psi).
\end{cases}
\end{align*}
\end{cor}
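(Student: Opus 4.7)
The plan is to derive the corollary directly from \prettyref{thm:Onesided_fibre-pressure_vs_base-pressure}, which has just established the variational formula $\mathcal{P}(g,\N) = \inf_{q\leq 0}\mathcal{P}(g+q\psi)$, by combining it with \prettyref{thm:fibre_pressure_vs_base_pressure}(2) for the $\Z$-case and with the convexity of the map $P\colon q \mapsto \mathcal{P}(g+q\psi)$. Indeed, the corollary is essentially a repackaging of the dichotomy already encountered in the proof of \prettyref{thm:Onesided_fibre-pressure_vs_base-pressure}, and I would present it as two short case distinctions.

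For the first case, I would simply unwind the definitions: if $\inf_{q\leq 0}\mathcal{P}(g+q\psi) = \inf_{q\in\R}\mathcal{P}(g+q\psi)$, then \prettyref{thm:Onesided_fibre-pressure_vs_base-pressure} identifies $\mathcal{P}(g,\N)$ with the left-hand side while \prettyref{thm:fibre_pressure_vs_base_pressure}(2) identifies $\mathcal{P}(g,\Z)$ with the right-hand side, so both agree. For the second case, assume that $\inf_{q>0}\mathcal{P}(g+q\psi) = \inf_{q\in\R}\mathcal{P}(g+q\psi)$. The key ingredient here is that $P$ is convex as a map $\R\to\R\cup\{+\infty\}$, which is standard and follows from H\"older's inequality applied to the partition sums defining the Gurevich pressure (see \cite{MR2003772}). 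Since the hypothesis forces the global infimum of the convex function $P$ to be attained (or approached) on $[0,+\infty)$, convexity yields that $P$ is non-increasing on $(-\infty,0]$. In particular,
\[
\inf_{q\leq 0} P(q) = P(0) = \mathcal{P}(g),
\]
and \prettyref{thm:Onesided_fibre-pressure_vs_base-pressure} then gives $\mathcal{P}(g,\N) = \mathcal{P}(g)$, completing the second case.

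I do not expect any serious obstacle, since the statement is a bookkeeping corollary. The only point to keep an eye on is the behaviour when some pressures are $+\infty$: using $\psi \geq -1$ one has $P(q) \leq \mathcal{P}(g) + |q|$ for $q\leq 0$ and $P(q) \geq \mathcal{P}(g) - q$ for $q\geq 0$, so that $\mathcal{P}(g) = +\infty$ forces $P(q) = +\infty$ on $[0,\infty)$ and hence all quantities involved are simultaneously infinite, in which case both identities hold trivially.
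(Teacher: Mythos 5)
Your proposal is correct and follows essentially the same argument as the paper: case one is a direct consequence of combining \prettyref{thm:fibre_pressure_vs_base_pressure} with \prettyref{thm:Onesided_fibre-pressure_vs_base-pressure}, and case two uses convexity of $q\mapsto\mathcal{P}(g+q\psi)$ to conclude it is non-increasing on $\R_{\leq 0}$, so that $\inf_{q\leq 0}\mathcal{P}(g+q\psi)=\mathcal{P}(g)$. Your closing remark about the infinite-pressure case is a minor extra check that the paper's corollary proof does not spell out (it is handled earlier, inside the proof of \prettyref{thm:Onesided_fibre-pressure_vs_base-pressure}), and it is correct as stated given the case-two hypothesis.
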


\begin{proof}
If $\inf_{q\leq0}\mathcal{P}(g+q\psi)=\inf_{q\in\R}\mathcal{P}(g+q\psi)$
then the equality is an immediate consequence of \prettyref{thm:fibre_pressure_vs_base_pressure}
and \prettyref{thm:Onesided_fibre-pressure_vs_base-pressure}. Since
$\inf_{q>0}\mathcal{P}(g+q\psi)=\inf_{q\in\R}\mathcal{P}(g+q\psi)$
implies that $q\mapsto\mathcal{P}(g+q\psi)$ is by convexity decreasing
on $\R_{\leq0}$, it follows with \prettyref{thm:Onesided_fibre-pressure_vs_base-pressure}
that $\mathcal{P}\left(g,\N\right)=\inf_{q\leq0}\mathcal{P}(g+q\psi)=\mathcal{P}(g+0\psi)=\mathcal{P}(g)$.
\end{proof}
\begin{proof}[Proof of \prettyref{thm:HD_Transient}]
Since $\FI_{\N}^{n}\ge\FI_{\Z}^{n}$ we have $T^{+}\left(\FI_{\N}\right)\subset T^{+}\left(\FI_{\Z}\right)$.
By \prettyref{lem:CombinatoriaKeyObservation} we have $T^{+}\left(\FI_{\N}\right)+\left(-\N\right)\supset T^{+}\left(\FI_{\Z}\right)$,
thus, countable stability of the Hausdorff dimension proves the first
identity. The second identity holds trivially.

If $\alpha_{\max}>0$ then $\mu_{\delta_{0}\varphi}$-a.\,e\@. $x$
we have $\lim_{n}S_{n}\psi\left(x\right)=\infty$. Hence, $\delta_{0}\geq\dim_{H}T^{+}\left(\FI_{\Z}\right)\ge\dim_{H}\left(\mu_{\delta_{0}\varphi}\right)=\delta_{0}$.

Now suppose that $\alpha_{\max}\leq0$. Since $0\in\left(\underline{\psi},\overline{\psi}\right)$,
by \prettyref{thm:fibre_pressure_vs_base_pressure} there exists $q_{0}\geq0$
(cf\@. the proof of \cite[Proposition 4.3]{MR4373996}) such that
\[
\mathcal{P}\left(\delta_{\Z}\varphi+q_{0}\psi\right)=\min_{q}\mathcal{P}\left(\delta_{\Z}\varphi+q\psi\right)=\mathcal{P}(\delta_{\Z}\varphi,\Z)<\infty.
\]
Since $q_{0}\geq0$, arguing as in the proof of \cite[Theorem 1.3]{MR4373996},
for $\epsilon>0$, $k,N\in\N$, and since by \prettyref{lem:L_impies_in_Rep(f)}
every basic interval contained in $L$ contains a periodic point for
$\FI$, we can show $\dim_{H}\left(T^{+}\left(\FI_{\Z}\right)\cap L\right)\le\delta_{\Z}$.
Recall that for $x\in\Per_{n}\left(\FI_{0}\right)\cap L$there exists
a unique $\omega(x,n)\in I^{n}$ such that $x=\pi\left(\overline{\omega\left(x,n\right)}\right)$.
Hence, 
\begin{align*}
\sum_{n\geq N}\sum_{x\in\Per_{n}\left(\FI_{0}\right)\cap L,S_{n}\psi\left(x\right)>k}\left|J_{\omega\left(x,n\right)}\right|^{\delta_{\Z}+\epsilon} & \leq\sum_{n\geq N}\sum_{x\in\Per_{n}\left(\FI_{0}\right)}\exp\left(S_{n}\left(\left(\delta_{\Z}+\epsilon\right)\varphi+q_{0}\psi\right)\left(x\right)\right)<\infty
\end{align*}
which proves $\dim_{H}\left(T^{+}\left(\FI_{\Z}\right)\cap L\right)\le\delta_{\Z}$.
Finally, since $T^{+}\left(\FI_{\Z}\right)\subset\bigcup_{m\in\N}\FI_{\Z}^{m}\left(T^{+}\left(\FI_{\Z}\right)\cap L\right)$
and the branches of $\FI_{\Z}$ are Lipschitz, the countable stability
of Hausdorff dimension implies $\dim_{H}T^{+}\left(\FI_{\Z}\right)\le\delta_{\Z}$
follows.

For each $n\in\N$, we define $\delta_{\Z}^{n}$ for the restricted
map $\left(\FI_{0}\right)_{I\cap\{0,\dots,n\}}$ restricted to $\Delta_{I\cap\{0,\dots,n\}}$.
Using the exhaustion principle \prettyref{lem:exhaustio_principle},
\prettyref{thm:Onesided_fibre-pressure_vs_base-pressure} and \prettyref{lem:infimum_exchange}
shows that $\delta_{\Z}^{n}\rightarrow\delta_{\Z}$ as $n\rightarrow\infty$.

By \cite[Theorem 1.3]{MR4373996} we have for each $n\in\N$, $\dim_{H}T^{+}\left(\FI_{\Z}\right)\ge\delta_{\Z}^{n}$
completing the proof.
\end{proof}
\begin{proof}[Proof of \prettyref{thm:Big_Theorem}]
 Using \prettyref{thm:Onesided_fibre-pressure_vs_base-pressure}
we have $\mathcal{P}\left(s\varphi,\N\right)=\inf_{q\leq0}\mathcal{P}(s\varphi+q\psi)$.
Hence, by \prettyref{lem:infimum_exchange} we obtain
\[
\delta_{\N}=\inf\left\{ s>0:\inf_{q\leq0}\mathcal{P}(s\varphi+q\psi)\leq0\right\} =\inf_{q\le0}\inf\left\{ s\ge0:\mathcal{P}\left(s\varphi+q\psi\right)\le0\right\} 
\]
and by \prettyref{lem:inf_sq_criterion} the formula relating the
critical exponents follows.

We now turn to the proof of the trichotomy. If $\alpha_{\max}>0$
then $\delta_{\Z}<\delta_{0}$ by \prettyref{thm:DimGap_non-reflective}.
By the ergodic theorem we have $\mu_{\delta_{0}\varphi}\left(T^{+}\left(\FI_{\N}\right)\right)=1$.
If $\alpha_{\max}<0$, by the first part $\delta_{\N}=\delta_{0}$.
It is easy to see that in this case $\mu_{\delta_{0}\varphi}\left(R\left(\FI_{\N}\right)\right)=1$
and $\mu_{\delta_{0}\varphi}\left(T^{+}\left(\FI_{\N}\right)\right)=0$.
If $\alpha_{\max}=0$ then $\delta_{\Z}=\delta_{0}$ by \prettyref{thm:DimGap_non-reflective}.
Clearly, by the law of iterated logarithm, we have $\mu_{\delta_{0}\varphi}\left(R\left(\FI_{\N}\right)\right)=1$
and $\mu_{\delta_{0}\varphi}\left(T^{+}\left(\FI_{\N}\right)\right)=0$
.
\end{proof}

\subsection{Second-order phase-transition\label{subsec:Phase-transition}}

We close this section discussing the phenomenon of phase transition
which occurs naturally in the context of reflective systems.
\begin{proof}[Proof of \prettyref{thm:Phasetransition}]
  First note that by our assumptions, for all $s>s_{1}$ and $q<q_{0}$
we have $p(s,q):=\mathcal{P}\left(s\varphi+q\psi\right)<\infty$.
Hence, using convexity of $p$, we have that $p$ is finite in a neighbourhood
of $\left(s_{0},0\right)$ in $\R^{2}$ and consequently, in this
neighbourhood $\left(s,q\right)\mapsto\int\psi\d\mu_{s\varphi+q\psi}=\frac{\partial}{\partial q}p(s,q)$
is also finite and real-analytic satisfying $\frac{\partial}{\partial q}p(s,q)|_{\left(s_{0},0\right)}=0$
and $\frac{\partial^{2}}{\partial q^{2}}p(s,q)|_{\left(s_{0},0\right)}=\cov_{s_{0}\varphi}\left(\psi,\psi\right)$
\cite[Propositions 2.6.13 and  2.6.14]{MR2003772}. Our standing assumption
$\underline{\psi}<0<\overline{\psi}$ implies that $\cov_{s_{0}\varphi}\left(\psi,\psi\right)>0$.
Therefore, by the implicit function theorem there exists a neighbourhood
$U$ of $s_{0}$ and an real-analytic function $q$ on $U$ with $q\left(s_{0}\right)=0$,
\begin{align*}
\frac{\partial}{\partial q}p(s,q)|_{(s,q(s))} & =\int\psi\d\mu_{s\varphi+q(s)\psi}=0,\,\text{and }\\
q'(s) & =-\frac{\frac{\partial}{\partial s}\frac{\partial}{\partial q}p(s,q)|_{(s,q(s))}}{\frac{\partial}{\partial q}\frac{\partial}{\partial q}p(s,q)|_{(s,q(s))}}=-\frac{\cov_{s\varphi+q(s)\psi}\left(\varphi,\psi\right)}{\cov_{s\varphi+q(s)\psi}\left(\psi,\psi\right)}.
\end{align*}
By convexity and the fact that $\frac{\partial}{\partial q}p(s,q)|_{(s,q(s))}=0$,
we have $\mathcal{P}(s\varphi+q(s)\psi)=\min_{q\in\R}\mathcal{P}(s\varphi+q\psi)$,
for $s\in U$. By \prettyref{cor:Onesided_vs_two_sided-1} and \prettyref{thm:fibre_pressure_vs_base_pressure},
for all $s\in U$, we consequently have
\[
\mathcal{P}(s\varphi,\N)=\inf_{q\leq0}\mathcal{P}(s\varphi+q\psi)=\begin{cases}
\mathcal{P}(s\varphi+q(s)\psi), & q\left(s\right)\leq0,\\
\mathcal{P}(s\varphi), & q\left(s\right)>0.
\end{cases}
\]
The first and second derivative of $s\mapsto p(s,q(s))$ are then
\begin{align*}
\frac{\partial}{\partial s}p(s,q(s)) & =\frac{\partial}{\partial s}p(s,q)|_{(s,q(s))}+q'(s)\underbrace{\frac{\partial}{\partial q}p(s,q)|_{(s,q(s))}}_{=0}=\frac{\partial}{\partial s}p(s,q)|_{(s,q(s))}\qquad\text{and}\\
\frac{\partial^{2}}{\partial s^{2}}p(s,q(s)) & =\frac{\partial}{\partial s}\frac{\partial}{\partial s}p(s,q)|_{(s,q(s))}+q'(s)\frac{\partial}{\partial s}\frac{\partial}{\partial q}p(s,q)|_{(s,q(s))}\\
 & =\cov_{s\varphi+q(s)\psi}\left(\varphi,\varphi\right)-\frac{\left(\cov_{s\varphi+q(s)\psi}\left(\varphi,\psi\right)\right)^{2}}{\cov_{s\varphi+q(s)\psi}\left(\psi,\psi\right)}.
\end{align*}
First assume that $\cov_{s\varphi+q(s)\psi}\left(\varphi,\psi\right)>0$.
Then $q'(s_{0})<0$ and the one-sided first derivatives of $s\mapsto\mathcal{P}(s\varphi,\N)$
at $s_{0}$ from the left and right are given by 
\begin{align*}
\lim_{t\nearrow s_{0}}\frac{\partial}{\partial s}\mathcal{P}(s\varphi,\N)|_{s=t} & =\frac{\partial}{\partial s}p(s,0)|_{s=s_{0}}=\int\varphi\d\mu_{s_{0}\varphi},\\
\lim_{t\searrow s_{0}}\frac{\partial}{\partial s}\mathcal{P}(s\varphi,\N)|_{s=t} & =\frac{\partial}{\partial s}p(s,q(s))|_{s=s_{0}}=\int\varphi\d\mu_{s_{0}\varphi},
\end{align*}
and coincide, whereas the one-sided second derivatives turn out to
be
\begin{align*}
\lim_{t\nearrow s_{0}}\frac{\partial^{2}}{\partial s^{2}}\mathcal{P}(s\varphi,\N)|_{s=t} & =\frac{\partial^{2}}{\partial s^{2}}p(s,0)|_{s=s_{0}}=\cov_{s_{0}\varphi}\left(\varphi,\varphi\right),\\
\lim_{t\searrow s_{0}}\frac{\partial^{2}}{\partial s^{2}}\mathcal{P}(s\varphi,\N)|_{s=t} & =\frac{\partial^{2}}{\partial s^{2}}p(s,q(s))|_{s=s_{0}}=\cov_{s_{0}\varphi}\left(\varphi,\varphi\right)-\frac{\left(\cov_{s_{0}\varphi}\left(\varphi,\psi\right)\right)^{2}}{\cov_{s_{0}\varphi}\left(\psi,\psi\right)}.
\end{align*}
We conclude that $s\mapsto\mathcal{P}(s\varphi,\N)$ is continuously
differentiable at $s_{0}$, but not twice differentiable at $s_{0}$.
The case $\cov_{s\varphi+q(s)\psi}\left(\varphi,\psi\right)<0$ is
similar. If $\cov_{s\varphi+q(s)\psi}\left(\varphi,\psi\right)=0$,
then $s\mapsto\mathcal{P}(s\varphi,\N)$ is twice differentiable at
$s_{0}$ since 
\[
\lim_{t\nearrow s_{0}}\frac{\partial^{2}}{\partial s^{2}}\mathcal{P}(s\varphi,\N)|_{s=t}=\cov_{s_{0}\varphi}\left(\varphi,\varphi\right)=\lim_{t\searrow s_{0}}\frac{\partial^{2}}{\partial s^{2}}\mathcal{P}(s\varphi,\N)|_{s=t}.\qedhere
\]
\end{proof}
\printbibliography

\end{document}